\theoremstyle{plain}
\newtheorem{theorem}{Theorem}[section]
\newtheorem{proposition}[theorem]{Proposition}
\newtheorem{lemma}[theorem]{Lemma}
\newtheorem{corollary}[theorem]{Corollary}
\theoremstyle{remark}
\newtheorem{remark}[theorem]{\sc Remark}
\newtheorem{example}[theorem]{\sc Example}
\newtheorem*{notations}{\sc Notations}
\newtheorem*{acknowledgements}{\sc Acknowledgements}
\renewcommand{\baselinestretch}{1.13}
\newenvironment{reference}[1]{%

\begin{flushleft}\normalsize{\textsc{References}}\end{flushleft}%
\begin{enumerate}\setlength{\itemsep}{-5pt}\small
}{\end{enumerate}}
\renewcommand{\section}{%
\@startsection{section}{1}{\z@}%
{3.5ex \@plus -1ex \@minus -.2ex}%
{2.3ex \@plus.2ex}%
{\reset@font\normalsize\scshape}}
\renewcommand{\subsection}{%
\@startsection{subsection}{2}{\z@}%
{-3.5ex \@plus -1ex \@minus -.2ex}%
{-2.3ex \@plus.2ex}%
{\reset@font\normalsize\scshape}}
\DeclareSymbolFont{cyrletters}{OT2}{wncyr}{m}{n}
\DeclareMathSymbol{\Sha}{\mathalpha}{cyrletters}{"58}
\title{\vspace*{-22mm}
\large{\textbf{
On pro-$p$ link groups of number fields \\[1.5mm]
}}
\footnotetext{2010 Mathematics Subject Classification: 11R23 (11R18, 11R32, 57M05).}
\footnotetext{Key words: Iwasawa theory, arithmetic topology, pro-$p$ Galois group, restricted ramification.
}
}
\author{
\textsc{\normalsize Yasushi Mizusawa}
}
\date{}
\begin{document}
{\renewcommand{\baselinestretch}{1.05} \maketitle}

\vspace*{-14mm}
\renewcommand{\abstractname}{}
{\renewcommand{\baselinestretch}{1.05}
\begin{abstract}{\small 
\noindent\textsc{Abstract.} 
As an analogue of a link group, 
we consider the Galois group of the maximal pro-$p$-extension of a number field 
with restricted ramification 
which is cyclotomically ramified at $p$, i.e., 
tamely ramified over the intermediate cyclotomic $\mathbb Z_p$-extension of the number field. 
In some basic cases, 
such a pro-$p$ Galois group also has a Koch type presentation 
described by linking numbers and 
mod $2$ Milnor numbers (R\'edei symbols) of primes. 
Then the pro-$2$ Fox derivative yields 
a calculation of Iwasawa polynomials 
analogous to Alexander polynomials. 
}\end{abstract}}

\section{Introduction}

Let $p$ be a fixed prime number. 
We often regard the ring $\mathbb Z_p$ of $p$-adic integers as the additive group. 
Let $k$ be a number field, i.e., an extension of finite degree over the rational number field $\mathbb Q$, 
and let $P$ be the set of all primes of $k$ lying over $p$. 
For an algebraic extension $K/k$ and 
a finite set $S$ of primes of $k$ none of which are complex archimedean, 
we denote by $K_S$ the maximal pro-$p$-extension of $K$ unramified outside primes lying over $v \in S$, 
and put $G_S(K)=\mathrm{Gal}(K_S/K)$. 
Since only pro-$p$-extensions over $k$ are treated here, 
we assume that the absolute norm $|N_{k/\mathbb Q}v| \equiv 1 \pmod{p}$ 
if $v \in S\setminus P$ is a prime ideal, 
and that $S$ contains no (real) archimedean primes if $p \neq 2$. 
The finitely presented pro-$p$ group $G_S(k)$ has a Koch type presentation in some basic cases 
(cf.\ e.g.\ \cite{KinH,Koch,Win}), 
and has been studied 
with a viewpoint of arithmetic topology 
(cf.\ \cite{Ama14,Mor02,Mor04,Vog05} etc.). 

Let $k^{\mathrm{cyc}}$ be the cyclotomic $\mathbb Z_p$-extension of $k$. 
A main object of Iwasawa theory is the $S$-ramified Iwasawa module $G_S(k^{\mathrm{cyc}})^{\mathrm{ab}}$, which is the abelianization of $G_S(k^{\mathrm{cyc}})$. 
If $P \subset S$, then $k^{\mathrm{cyc}} \subset k_S$, 
and hence $G_S(k^{\mathrm{cyc}})^{\mathrm{ab}}$ can be studied as a subquotient of $G_S(k)$ with the action of $\mathrm{Gal}(k^{\mathrm{cyc}}/k)$ 
induced by inner automorphism. 
On the other hand, we assume that $S \cap P=\emptyset$ throughout the following. 
In a similar way, 
the tamely ramified Iwasawa module $G_S(k^{\mathrm{cyc}})^{\mathrm{ab}}$ 
can be also studied as a subquotient of 
the Galois group 
\[
\widetilde{G}_S(k)=\mathrm{Gal}((k^{\mathrm{cyc}})_S/k) 
\simeq G_S(k^{\mathrm{cyc}}) \rtimes \mathrm{Gal}(k^{\mathrm{cyc}}/k) 
\]
of the maximal pro-$p$-extension $(k^{\mathrm{cyc}})_S/k$ which is unramified outside $S \cup P$ and `cyclotomically ramified' at any $v \in P$. 
Since the pro-$p$ group $\widetilde{G}_S(k)$ is also finitely presented (cf.\ \cite{BLM13,Sal08}), 
we can know the structure of any subquotient in principle if we obtain the explicit presentation of $\widetilde{G}_S(k)$ by generators and relations. 
In particular when $S=\emptyset$, 
$(k^{\mathrm{cyc}})_{\emptyset}$ is the union of the $p$-class field towers of 
intermediate fields of $k^{\mathrm{cyc}}/k$, 
and the closed subgroup $G_{\emptyset}(k^{\mathrm{cyc}})$ of $\widetilde{G}_{\emptyset}(k)$ is a central object in 
nonabelian Iwasawa theory of $\mathbb Z_p$-extensions (\cite{Oza07}). 
The purpose of this paper is to study these subjects from a viewpoint of 
analogy between Iwasawa theory and Alexander-Fox theory 
(cf.\ \cite{HMM,KM08,Maz,Mor,Uek} etc.), 
regarding the pro-$p$ Galois group $\widetilde{G}_S(k)$ as an analogue of a link group $\pi_1(\mathsf{X})$ (cf.\ Remark \ref{rem:anlgG}). 
A summary of results in each section is the following. 

In Section \ref{sec:genrelrk}, 
we recall Salle's Shafarevich type formula 
on the generator rank and the relation rank of $\widetilde{G}_S(k)$, 
by refering the results and arguments of \cite{BLM13,Sal08} 
with a little refinement in the case where $p=2$. 

In Section \ref{sec:pres}, 
we obtain a Koch type presentation of $\widetilde{G}_S(k)$ in some basic cases 
(Theorems \ref{thm:presQ} and \ref{thm:presk}), 
where the relations modulo the $3$rd step of the lower central series 
are described by linking numbers of primes. 
If $k=\mathbb Q$ and the archimedean prime $\infty \in S$, 
the R\'edei symbols appear in the relations modulo the $4$th step of the Zassenhaus filtration as the mod $2$ Milnor numbers (Proposition \ref{Redei:ciab} and Corollary \ref{cor:free4}). 
In particular, we obtain a triple of Borromean primes including $p=2$ (Example \ref{ex:Borromean}). 
Moreover, 
by the arguments as in \cite{BLM13} etc., 
$\widetilde{G}_S(\mathbb Q)$ is a mild pro-$p$ group of deficiency zero 
if $\infty \not\in S$ and $S \cup \{p\}$ forms a `circular set' of primes 
(cf.\ Section \ref{sbsec:mild}). 
Then the cohomological dimension $cd(\widetilde{G}_S(\mathbb Q))=2$. 

In Section \ref{sec:Iwapoly}, 
we focus on Iwasawa polynomials, 
which are defined as the characteristic polynomials of Iwasawa modules, 
and certainly analogous to Alexander polynomials (cf.\ Remark \ref{rem:anlgA}). 
Then 
the Koch type presentation of $\widetilde{G}_{\emptyset}(k)$ induces another proof of Gold's theorem \cite{Gol74} (Theorem \ref{thm:Gold}). 
If $\mathbb Q_S$ contains a quadratic extension $K/\mathbb Q$ ramified at any $v \in S\setminus\{\infty\}$, 
the unramified Iwasawa module $X=G_{\{\infty\}}(K^{\mathrm{cyc}})^{\mathrm{ab}}$ 
(in the narrow sense) 
is a subquotient of $\widetilde{G}_S(\mathbb Q)$. 
Following the arguments by Fr\"ohlich and Koch 
(cf.\ \cite{KinH,Koch,Vog05}), 
we obtain an approximation of the initial Fitting ideal of $X$ 
from the Koch type presentation of $\widetilde{G}_S(\mathbb Q)$ 
by the pro-$2$ Fox free differential calculus (Theorem \ref{thm:approx}). 
In particular, for a certain family of imaginary quadratic fields $K$, 
we calculate an approximation of the quadratic Iwasawa polynomial of $X$, 
which is described by $4$th power residue symbols (Theorem \ref{thm:d=2:imag}). 
An explicit presentation of 
$\widetilde{G}_{\{\infty\}}(K)=\mathrm{Gal}((K^{\mathrm{cyc}})_{\{\infty\}}/K)$ 
is also calculated for a certain family of real quadratic fields $K$ 
(Theorem \ref{thm:d=2:real}). 

\begin{notations}
For a pro-$p$ group $G$, 
we denote by $[h,g]=h^{-1}g^{-1}hg$ the commutator of $g,h \in G$. 
For a closed subgroup $H$ of $G$, 
$[H,G]$ (resp.\ $H^{p^n}$) denotes the minimal closed subgroup containing $\{[h,g]\,|\,h \in H,g \in G\}$ (resp.\ $\{h^{p^n}\,|\,h \in H\}$). 
Then $G^{\mathrm{ab}}=G/[G,G]$. 
The $i$th cohomology group with coefficients in $\mathbb F_p=\mathbb Z/p\mathbb Z$ is denoted by $H^i(G)=H^i(G,\mathbb Z/p\mathbb Z)$. 
For a set $Y$, $|Y|$ denotes the cardinality. 
For objects $x$ and $y$, $\delta_{x,y}=1$ if $x=y$, and $\delta_{x,y}=0$ otherwise. 
\end{notations}

\section{Generator rank and relation rank}\label{sec:genrelrk}

\subsection{Pro-$p$ class field theory.} 
First, we recall the id\`elic class field theory by \cite{Jau98}. 
For each prime $v$ of $k$, 
we put $\widehat{k_v^{\times}}= \varprojlim k_v^{\times}/(k_v^{\times})^{p^n}$, 
where $k_v$ is the completion of $k$ at $v$. 
For a nonarchimedean prime $v$, we put $\mathcal{U}_v = \varprojlim U_v/U_v^{p^n} \subset \widehat{k_v^{\times}}$, 
where $U_v = \mathrm{Ker}(k_v^{\times} \stackrel{v}{\rightarrow} \mathbb Z)$ is the unit group of the local field $k_v$. 
The local class field theory yields that 
$\mathrm{Gal}(k_v^{\mathrm{ab},p}/k_v) \simeq \widehat{k_v^{\times}}$ 
and $\mathrm{Gal}(k_v^{\mathrm{ur},p}/k_v) \simeq \widehat{k_v^{\times}}/\mathcal{U}_v$, 
where $k_v^{\mathrm{ab},p}$ denotes the maximal abelian pro-$p$-extension of $k_v$, and $k_v^{\mathrm{ur},p}$ denotes the unramified $\mathbb Z_p$-extension of $k_v$. 
For the cyclotomic $\mathbb Z_p$-extension $k_v^{\mathrm{cyc}}$ of $k_v$,
put 
\[
\mathcal{N}_v=\mathrm{Ker}(\widehat{k_v^{\times}} \twoheadrightarrow \varprojlim k_v^{\times}/N_{k_{v,n}^{\mathrm{cyc}}/k_v} (k_{v,n}^{\mathrm{cyc}})^{\times}) \simeq \mathrm{Ker}(\mathrm{Gal}(k_v^{\mathrm{ab},p}/k_v) \twoheadrightarrow \mathrm{Gal}(k_v^{\mathrm{cyc}}/k_v)) , 
\]
where $k_{v,n}^{\mathrm{cyc}}$ denotes the subextension of $k_v^{\mathrm{cyc}}$ 
such that $[k_{v,n}^{\mathrm{cyc}}:k_v]=p^n$. 
Then we put $\widetilde{\mathcal{U}}_v=\mathcal{U}_v \cap \mathcal{N}_v$. 
Note that 
$\widetilde{\mathcal{U}}_v=\mathcal{U}_v=\mathcal{N}_v$ (i.e., $k_v^{\mathrm{cyc}}=k_v^{\mathrm{ur},p}$) 
if $v \not\in P$, 
and that 
$\widehat{k_v^{\times}}/\widetilde{\mathcal{U}}_v \simeq \mathrm{Gal}(k_v^{\mathrm{ur},p}k_v^{\mathrm{cyc}}/k_v) \simeq \mathbb Z_p^2$ 
if $v \in P$. 
On the other hand, 
we put $\widetilde{\mathcal{U}}_v=\mathcal{U}_v=\widehat{k_v^{\times}}$ for archimedean $v|\infty$, 
where we note that $\widehat{k_v^{\times}} \simeq \mathbb R^{\times}/\mathbb R_{>0}$ if $p=2$ and $v$ is real, and $\widehat{k_v^{\times}}=1$ otherwise. 

\begin{example}[cf.\ e.g.\ {\cite[Appendix \S3]{Was}}]\label{exam:kvQp}
Suppose that $k_v=\mathbb Q_p$. 
Then $\widehat{k_v^{\times}}=p^{\mathbb Z_p}\mathcal{U}_v$. 
If $p \neq 2$, 
we have $\mathcal{U}_v=(1+p)^{\mathbb Z_p}$, $\mathcal{N}_v=p^{\mathbb Z_p}$ and $\widetilde{\mathcal{U}}_v=\{1\}$. 
If $p=2$, 
we have $\mathcal{U}_v=\langle -1 \rangle 5^{\mathbb Z_2}$, $\mathcal{N}_v=\langle -1 \rangle 2^{\mathbb Z_2}$ and $\widetilde{\mathcal{U}}_v=\langle -1 \rangle$. 
\end{example}

Let 
$\mathcal{J}_k= \mathrm{Ker}(
\prod_v \widehat{k_v^{\times}} \rightarrow 
\prod_v (\widehat{k_v^{\times}}/\mathcal{U}_v)
/\bigoplus_v (\widehat{k_v^{\times}}/\mathcal{U}_v)
)$ 
be the id\`eles of $k$ as defined in \cite{Jau98}, 
and put $\overline{k^{\times}}=k^{\times} \otimes \mathbb Z_p \subset \mathcal{J}_k$, 
where we identify $\overline{k^{\times}}$ with the image of the diagonal embedding $\overline{k^{\times}} \rightarrow \varprojlim k^{\times}/(k^{\times})^{p^n} \stackrel{\mathrm{diag.}}{\longrightarrow} \mathcal{J}_k$ (cf.\ \cite[Remaques 1 (iii), Th\'eor\`eme et d\'efinition 1.4]{Jau98}). 
We also identify $\widehat{k_w^{\times}}$ with $\widehat{k_w^{\times}} \times \prod_{v \neq w}\{1\} \subset \mathcal{J}_k$ for each prime $w$ of $k$. 
The reciprocity map in the pro-$p$ version of global class field theory \cite{Jau98} 
is the isomorphism 
\begin{align}\label{idele:rec}
\mathrm{rec} : \mathcal{J}_k/\overline{k^{\times}} \stackrel{\simeq}{\longrightarrow} \mathrm{Gal}(k^{\mathrm{ab},p}/k)
\end{align}
such that $\mathrm{rec}(\mathcal{U}_v \overline{k^{\times}}/\overline{k^{\times}})$ (resp.\ $\mathrm{rec}(\widehat{k_v^{\times}} \overline{k^{\times}}/\overline{k^{\times}})$) is the inertia group (resp.\ decomposition group) of each prime $v$, 
where $k^{\mathrm{ab},p}$ is the maximal abelian pro-$p$-extension of $k$. 
In particular, we have 
$\mathcal{J}_k/\mathcal{U}\overline{k^{\times}} \simeq G_{\emptyset}(k)^{\mathrm{ab}}$, 
where $\mathcal{U}= \prod_v \mathcal{U}_v$. 
Recall that $S \cap P =\emptyset$. 
Put 
\[
\widetilde{\mathcal{U}}_S 
= 
\underset{v \not\in S}{\textstyle{\prod}} \widetilde{\mathcal{U}}_v 
= 
\underset{v \not\in S \cup P}{\textstyle{\prod}} \mathcal{U}_v 
\times 
\underset{v \in P}{\textstyle{\prod}} \widetilde{\mathcal{U}}_v 
\times \underset{v \in S}{\textstyle{\prod}} \{1\} 
\subset \mathcal{U} . 
\]
Then we have the following isomorphism. 

\begin{proposition}[cf.\ {\cite[Proposition 1.2]{Sal08}}]\label{prop:idele}
The reciprocity map (\ref{idele:rec}) induces an isomorphism 
\[
\mathrm{rec}_S : \mathcal{J}_k/\widetilde{\mathcal{U}}_S\overline{k^{\times}} \stackrel{\simeq}{\longrightarrow} \widetilde{G}_S(k)^{\mathrm{ab}} . 
\]
\end{proposition}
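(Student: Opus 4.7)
The plan is to use the global reciprocity isomorphism (\ref{idele:rec}) together with local class field theory to identify the subfield of $k^{\mathrm{ab},p}$ corresponding to $\widetilde G_S(k)^{\mathrm{ab}}$. By Galois theory, $\widetilde G_S(k)^{\mathrm{ab}}=\mathrm{Gal}(\widetilde L/k)$ for the maximal abelian pro-$p$-subextension $\widetilde L$ of $(k^{\mathrm{cyc}})_S/k$. Since $(k^{\mathrm{cyc}})_S/k^{\mathrm{cyc}}$ is by construction the maximal pro-$p$-extension unramified outside primes above $S$, an abelian pro-$p$-extension $M/k$ satisfies $M\subset(k^{\mathrm{cyc}})_S$ if and only if $Mk^{\mathrm{cyc}}/k^{\mathrm{cyc}}$ is unramified at every prime not lying over $S$.

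I would then translate this into a local condition via completions. For each prime $v\notin S$ of $k$, I claim the condition becomes $M_v\subset k_v^{\mathrm{ur},p}k_v^{\mathrm{cyc}}$, where $M_v$ denotes a completion of $M$ at a prime above $v$. Indeed, by local class field theory the inertia subgroup of $\mathrm{Gal}(k_v^{\mathrm{ab},p}/k_v^{\mathrm{cyc}})\simeq\mathcal N_v$ at the unique prime of $k_v^{\mathrm{cyc}}$ is $\mathcal U_v\cap\mathcal N_v=\widetilde{\mathcal U}_v$, whose fixed field inside $k_v^{\mathrm{ab},p}$ is precisely $k_v^{\mathrm{ur},p}k_v^{\mathrm{cyc}}$. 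Hence $M_vk_v^{\mathrm{cyc}}/k_v^{\mathrm{cyc}}$ is unramified iff $M_v\subset k_v^{\mathrm{ur},p}k_v^{\mathrm{cyc}}$, which via (\ref{idele:rec}) is equivalent to requiring $\widetilde{\mathcal U}_v$ to lie in the kernel of $\widehat{k_v^\times}\to\mathrm{Gal}(M/k)$. The archimedean case and the case $v\notin S\cup P$ specialize correctly under the conventions on $\widetilde{\mathcal U}_v$ recorded in the excerpt.

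Gluing these local conditions, a closed subgroup $N\supset\overline{k^\times}$ of $\mathcal J_k$ cuts out a subextension of $(k^{\mathrm{cyc}})_S$ exactly when $N\supset\widetilde{\mathcal U}_v$ for every $v\notin S$, i.e., when $N\supset\widetilde{\mathcal U}_S$; the maximal $\widetilde L$ thus corresponds to the minimal such $N$, namely $\widetilde{\mathcal U}_S\overline{k^\times}$, yielding the asserted isomorphism. The most delicate step should be the local identification at $v\in P$: here $k_v^{\mathrm{cyc}}/k_v$ is itself (totally) ramified, and one must verify that the inertia of $k_v^{\mathrm{ab},p}/k_v^{\mathrm{cyc}}$ under the reciprocity map is exactly $\widetilde{\mathcal U}_v$, rather than some larger subgroup reflecting the ramification of $k_v^{\mathrm{cyc}}/k_v$ itself. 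The identity $\widehat{k_v^\times}/\widetilde{\mathcal U}_v\simeq\mathbb Z_p^2$ noted in the excerpt is tailored precisely to this compatibility, after which the remainder is bookkeeping with (\ref{idele:rec}).
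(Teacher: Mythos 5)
Your argument is correct and is essentially the same as the paper's intended proof: the paper simply defers to Salle's Proposition 1.2, whose proof is exactly this class-field-theoretic translation (identify the maximal abelian subextension $\widetilde{L}$ of $(k^{\mathrm{cyc}})_S/k$ by the local conditions that $\widetilde{\mathcal{U}}_v=\mathrm{Gal}(k_v^{\mathrm{ab},p}/k_v^{\mathrm{ur},p}k_v^{\mathrm{cyc}})$ die in $\mathrm{Gal}(M/k)$ for all $v\notin S$, using the compatibility of local and global reciprocity), with the archimedean convention $\widetilde{\mathcal{U}}_v=\widehat{k_v^{\times}}$ handling the case $v\mid\infty$ exactly as you note.
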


\begin{proof}
See the proof of \cite[Proposition 1.2]{Sal08} 
with the consideration of the ramification of $v|\infty$. 
The same arguments hold for $S$ containing some $v|\infty$. 
\end{proof}

\subsection{Generator rank and Kummer groups.} 
The map 
$\overline{k^{\times}}=k^{\times} \otimes \mathbb Z_p \rightarrow \varprojlim k^{\times}/(k^{\times})^{p^n} \twoheadrightarrow k^{\times}/(k^{\times})^p$ 
induces an isomorphism 
\begin{align}\label{kkp_kkp}
k^{\times}/(k^{\times})^p \simeq \overline{k^{\times}}/(\overline{k^{\times}})^p : a(k^{\times})^p \mapsto (a \otimes 1)(\overline{k^{\times}})^p . 
\end{align}
Put $V=\mathcal{U}\mathcal{J}_k^p \cap \overline{k^{\times}}$ 
and 
$\widetilde{V}_S = \widetilde{\mathcal{U}}_S \mathcal{J}_k^p \cap \overline{k^{\times}}$. 
Then $\overline{k^{\times}}^p \subset \widetilde{V}_S \subset V$. 
Since 
$\widehat{k_v^{\times}}/(\widehat{k_v^{\times}})^p \simeq k_v^{\times}/(k_v^{\times})^p$ and $\mathcal{U}_v/\mathcal{U}_v^p \simeq U_v/U_v^p$ for each $v$, 
and since 
$\widehat{k_v^{\times}}/\mathcal{N}_v(\widehat{k_v^{\times}})^p \simeq k_v^{\times}/N_{k^{\mathrm{cyc}}_{v,1}/k_v}(k^{\mathrm{cyc}}_{v,1})^{\times}$ 
for $v \in P$, 
we have 
\begin{align}\label{V_B}
\widetilde{V}_S/(\overline{k^{\times}})^p 
\simeq \widetilde{B}_S/(k^{\times})^p 
\subset B_S/(k^{\times})^p 
\subset B_{\emptyset}/(k^{\times})^p
\simeq V/(\overline{k^{\times}})^p
\end{align}
under the isomorphism (\ref{kkp_kkp}), 
where $B_{\emptyset}$ consists of $a \in k^{\times}$ 
such that the principal ideal $(a)=\mathfrak{a}^p$ is a $p$th power of an ideal $\mathfrak{a}$ of $k$, 
and 
$B_S=\{a \in B_{\emptyset}\,|\,a \in (k_v^{\times})^p \hbox{ for all $v \in S$} \}$, 
\[
\widetilde{B}_S=\{a \in B_S\,|\,a \in (N_{k^{\mathrm{cyc}}_{v,1}/k_v}(k^{\mathrm{cyc}}_{v,1})^{\times} \cap U_v)(k_v^{\times})^p \hbox{ for all $v \in P$} \} . 
\]
There is also an exact sequence 
\begin{align}\label{E_B_A}
\entrymodifiers={+!!<0pt,\fontdimen22\textfont2>}
\xymatrix{
0 \ar[r] & E_k/E_k^p \ar[r] & B_{\emptyset}/(k^{\times})^p \ar[rr]^-{a(k^{\times})^p \mapsto [\mathfrak{a}]} && Cl(k) \ar[r]^-{p} & Cl(k) 
}, 
\end{align}
where $E_k$ is the unit group of $k$, and $Cl(k)$ is the ideal class group of $k$. 

\begin{remark}\label{rem:trivB}
The following facts are induced from (\ref{V_B}) and (\ref{E_B_A}): 
If $p \neq 2$ and either $k=\mathbb Q$ or $k$ is an imaginary quadratic field with class number $h_k=|Cl(k)|$ not divisible by $p$, 
then $\widetilde{B}_S=B_{\emptyset}=(k^{\times})^p$ 
except when $p=3$ and $k=\mathbb Q(\sqrt{-3})$. 
If $p=2$ and $k=\mathbb Q$, and if $S$ contains $\infty$ or a prime number $q \equiv 3 \pmod{4}$, then $\widetilde{B}_S=B_S=(k^{\times})^p$ (cf.\ \cite[Example 11.12]{Koch}). 
\end{remark}

Then we have the following formula. 

\begin{proposition}[cf.\ {\cite[Th\'eor\`eme 3.3]{Sal08}}]\label{prop:genrk}
Under the settings above, 
the minimal number $\widetilde{d}_S$ of generators of $\widetilde{G}_S(k)$ is 
\[
\widetilde{d}_S=\dim_{\mathbb F_p}H^1(\widetilde{G}_S(k))=
|S|+|P|+
\dim_{\mathbb F_p} \widetilde{B}_S/(k^{\times})^p - \dim_{\mathbb F_p} E_k/E_k^p . 
\]
\end{proposition}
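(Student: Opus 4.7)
By the Burnside basis theorem applied to the finitely generated pro-$p$ group $\widetilde{G}_S(k)$, the first equality $\widetilde{d}_S = \dim_{\mathbb F_p} H^1(\widetilde{G}_S(k))$ is immediate, so the task reduces to computing $\dim_{\mathbb F_p} \widetilde{G}_S(k)^{\mathrm{ab}}/p$. Proposition \ref{prop:idele} identifies this dimension with $\dim_{\mathbb F_p}\mathcal{J}_k/\widetilde{\mathcal{U}}_S\overline{k^\times}\mathcal{J}_k^p$, so the whole proof amounts to a Shafarevich-type id\`elic bookkeeping modulo $p$-th powers, in the spirit of \cite[Th\'eor\`eme 3.3]{Sal08}.

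The plan is to split the calculation via the natural surjection onto the unramified quotient $G_{\emptyset}(k)^{\mathrm{ab}}/p \simeq \mathcal{J}_k/\mathcal{U}\overline{k^\times}\mathcal{J}_k^p$. Since $\widetilde{\mathcal{U}}_v\subset\mathcal{U}_v$ termwise, this surjection fits into an exact sequence of finite-dimensional $\mathbb F_p$-vector spaces
\[
0 \to \mathcal{U}\big/\widetilde{\mathcal{U}}_S\bigl(\mathcal{U} \cap \overline{k^\times}\mathcal{J}_k^p\bigr) \to \widetilde{G}_S(k)^{\mathrm{ab}}/p \to G_\emptyset(k)^{\mathrm{ab}}/p \to 0,
\]
and I would evaluate the three pieces in turn. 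First, $\mathcal{U}/\widetilde{\mathcal{U}}_S\mathcal{U}^p$ decomposes as $\bigoplus_{v\in S}\mathcal{U}_v/\mathcal{U}_v^p \oplus \bigoplus_{v\in P}\mathcal{U}_v/\widetilde{\mathcal{U}}_v\mathcal{U}_v^p$, each summand contributing dimension one: for nonarchimedean $v\in S$ one uses $|N_{k/\mathbb Q}v|\equiv 1\pmod p$; for real archimedean $v\in S$ (which forces $p=2$) one uses $\widehat{k_v^\times}\simeq\mathbb Z/2$; for $v\in P$ one uses $\mathcal{U}_v/\widetilde{\mathcal{U}}_v\simeq\mathbb Z_p$. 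The total is $|S|+|P|$. Second, $G_\emptyset(k)^{\mathrm{ab}}/p \simeq \mathrm{Cl}(k)/p$ has dimension $\dim_{\mathbb F_p} B_\emptyset/(k^\times)^p - \dim_{\mathbb F_p} E_k/E_k^p$ by (\ref{E_B_A}). Third, the image of $\mathcal{U}\cap\overline{k^\times}\mathcal{J}_k^p$ in $\mathcal{U}/\widetilde{\mathcal{U}}_S\mathcal{U}^p$ is identified, via the map $u=\alpha j^p\mapsto[\alpha]$ combined with (\ref{V_B}), with the quotient $V/\widetilde V_S$, which has dimension $\dim_{\mathbb F_p} B_\emptyset/(k^\times)^p - \dim_{\mathbb F_p}\widetilde B_S/(k^\times)^p$. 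Combining these via the displayed exact sequence, the $\dim B_\emptyset/(k^\times)^p$ contributions cancel and the claimed equality emerges.

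The main obstacle is not conceptual but the careful handling of subtleties when $p=2$. The convention $\widetilde{\mathcal{U}}_v=\mathcal{U}_v=\widehat{k_v^\times}$ for archimedean $v\notin S$ must be reconciled with the possibly signed image of $E_k$ in $\prod_{v|\infty}\widehat{k_v^\times}/(\widehat{k_v^\times})^p$, and the local-global compatibility at $v\in P$ relies on the description $\mathcal{U}_v/\widetilde{\mathcal{U}}_v\simeq\mathbb Z_p$ from Example \ref{exam:kvQp} (rather than the $\mathbb Z_p^2$ that appears in $\widehat{k_v^\times}/\widetilde{\mathcal{U}}_v$). Once these signs and local norm conditions are correctly tabulated---the ``little refinement'' for $p=2$ alluded to in the introduction---the three dimension counts assemble into the stated formula.
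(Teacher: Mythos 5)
Your argument is correct and is essentially the paper's own proof: the paper simply defers to the id\`elic computation of \cite[Th\'eor\`eme 3.3]{Sal08} (noting that $\mathcal{U}_v/\mathcal{U}_v^p\simeq\mathbb Z/p\mathbb Z$ also for archimedean $v\in S$), and what you write out --- reducing to $\mathcal{J}_k/\widetilde{\mathcal{U}}_S\overline{k^{\times}}\mathcal{J}_k^p$ via Proposition \ref{prop:idele}, comparing with the unramified quotient, and evaluating the kernel through $\mathcal{U}/\widetilde{\mathcal{U}}_S\mathcal{U}^p$, the identification with $V/\widetilde{V}_S$, and the sequences (\ref{V_B}) and (\ref{E_B_A}) --- is precisely that argument made explicit. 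The dimension bookkeeping ($|S|+|P|$ for the local contributions, cancellation of the $B_{\emptyset}$ terms) checks out, so nothing further is needed.
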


\begin{proof}
See the proof of \cite[Th\'eor\`eme 3.3]{Sal08} (cf.\ also \cite[Theorem 3.5 (i)]{BLM13}) with the consideration of the ramification of $v|\infty$. 
Since $\mathcal{U}_v/\mathcal{U}_v^p \simeq \mathbb Z/p\mathbb Z$ 
also for archimedean $v \in S$, the same arguments hold. 
\end{proof}

\subsection{Relation rank and the Shafarevich kernel.}\label{sbsec:relrk} 
Let $k_v^{\mathrm{pro}\hbox{\scriptsize -}p}$ (resp.\ $k^{\mathrm{pro}\hbox{\scriptsize -}p}$) be the maximal pro-$p$-extension of $k_v$ (resp.\ $k$). 
We fix an embedding $k^{\mathrm{pro}\hbox{\scriptsize -}p} \subset k_v^{\mathrm{pro}\hbox{\scriptsize -}p}$ 
for each $v$. 
Then 
\[
\overline{G}_v=\mathrm{Gal}(k_v^{\mathrm{pro}\hbox{\scriptsize -}p}/k_v) 
\]
is identified with the decomposition subgroup of 
$\overline{G}=\mathrm{Gal}(k^{\mathrm{pro}\hbox{\scriptsize -}p}/k)$ 
for a fixed prime $\overline{v}|v$ of $k^{\mathrm{pro}\hbox{\scriptsize -}p}$ 
(cf.\ e.g.\ \cite[II, \S 6.1]{Serre}), 
and 
$\widetilde{G}_{S,v}=\mathrm{Gal}((k^{\mathrm{cyc}})_Sk_v/k_v)$ 
is also identified with the decomposition subgroup of 
$\widetilde{G}_S(k)=\mathrm{Gal}((k^{\mathrm{cyc}})_S/k)$ 
for the prime $\widetilde{v}$ of $(k^{\mathrm{cyc}})_S$ such that $\overline{v}|\widetilde{v}$. 
Since the maximal unramified pro-$p$-extension of $k_v^{\mathrm{cyc}}$ is $k_v^{\mathrm{ur},p}k_v^{\mathrm{cyc}}$, 
we have $(k^{\mathrm{cyc}})_Sk_v \subset k_v^{\mathrm{ur},p}k_v^{\mathrm{cyc}}$ 
if $v \not\in S$ and $v \nmid \infty$, particularly if $v \in P$. 
For each $v \in P$, we denote by 
\[
G_v^{\mathrm{cr}}=\mathrm{Gal}(k_v^{\mathrm{ur},p}k_v^{\mathrm{cyc}}/k_v) \simeq \mathbb Z_p^2 
\]
the maximal `cyclotomically ramified' quotient of $\overline{G}_v$. 
Put 
\[
\Sha^2(\widetilde{G}_S(k))=\mathrm{Ker}\big(
H^2(\widetilde{G}_S(k)) 
\stackrel{\mathrm{loc}}{\longrightarrow} 
\underset{v \in S^{\ast}}{\textstyle{\bigoplus}} H^2(\overline{G}_v) 
\oplus 
\underset{v \in P}{\textstyle{\bigoplus}} H^2(G_v^{\mathrm{cr}})
\big)  
\]
with the localization map $\mathrm{loc}$ 
induced from 
$\{\overline{G}_v \stackrel{\varphi_v}{\twoheadrightarrow} \widetilde{G}_{S,v} \subset \widetilde{G}_S(k)\}_{v \in S^{\ast}}$ 
and 
$\{G_v^{\mathrm{cr}} \stackrel{\varphi_v}{\twoheadrightarrow} \widetilde{G}_{S,v} \subset \widetilde{G}_S(k)\}_{v \in P}$, 
where $S^{\ast}=S \setminus \{w\}$ with an arbitrary (but suitable) $w \in S$ if $S \neq \emptyset$ and $k$ contains a primitive $p$th root $\zeta_p$ of unity, and $S^{\ast}=S$ otherwise. 

By \cite[Th\'eor\`eme 4.1]{Sal08} with a refinement in the case where $p=2$, we obtain the following theorem. 

\begin{theorem}[cf.\ {\cite[Th\'eor\`eme 4.1]{Sal08}}]\label{thm:relrk}
Under the settings above, an inequality 
\begin{align}\label{relrk:Sha<B}
\dim_{\mathbb F_p} \Sha^2(\widetilde{G}_S(k)) \le 
\dim_{\mathbb F_p} \widetilde{B}_S/(k^{\times})^p 
\end{align}
is satisfied, and the minimal number $\widetilde{r}_S$ of relations of $\widetilde{G}_S(k)$ satisfies 
\begin{align}\label{relrk:relrk}
\widetilde{r}_S = \dim_{\mathbb F_p}H^2(\widetilde{G}_S(k)) \le 
\dim_{\mathbb F_p} \Sha^2(\widetilde{G}_S(k)) 
+|S|-(1-\delta_{S,\emptyset})\theta +|P| , 
\end{align}
where $\theta=1$ if $\zeta_p \in k$, and $\theta=0$ otherwise. 
\end{theorem}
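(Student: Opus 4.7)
The plan is to follow the strategy of \cite[Théorème 4.1]{Sal08}, adapted to accommodate the ramification of archimedean primes when $p=2$. Since $\widetilde{G}_S(k)$ is finitely presented, the identity $\widetilde{r}_S = \dim_{\mathbb F_p} H^2(\widetilde{G}_S(k))$ is a standard fact for pro-$p$ groups, so the task splits into two pieces: bounding $\dim H^2$ via local cohomology, and identifying the Shafarevich kernel $\Sha^2$ with a Kummer-theoretic subgroup.

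For the second inequality (\ref{relrk:relrk}), I would begin with the tautological bound
\[
\dim H^2(\widetilde{G}_S(k)) \le \dim \Sha^2(\widetilde{G}_S(k)) + \sum_{v \in S^\ast} \dim H^2(\overline{G}_v) + \sum_{v \in P} \dim H^2(G_v^{\mathrm{cr}})
\]
coming from the very definition of $\Sha^2$ as the kernel of the localization map. Under the standing assumption $|N_{k/\mathbb Q}v| \equiv 1 \pmod{p}$ for $v \in S \setminus P$ (together with $\zeta_2 = -1 \in \mathbb R$ at real archimedean primes when $p=2$), each $\overline{G}_v$ with $v \in S^\ast$ has $\dim H^2(\overline{G}_v) = 1$, being a Demushkin group in the nonarchimedean case and $\mathrm{Gal}(\mathbb C/\mathbb R)$ in the real archimedean one. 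Since $G_v^{\mathrm{cr}} \simeq \mathbb Z_p^2$, we also have $\dim H^2(G_v^{\mathrm{cr}}) = 1$ for each $v \in P$. Finally, the correction $-(1-\delta_{S,\emptyset})\theta$ encodes the Hasse reciprocity/product formula: when $\zeta_p \in k$ and $S \ne \emptyset$, the sum of local invariants must vanish, allowing one prime $w \in S$ to be omitted from the localization target, which is the origin of $S^\ast = S \setminus \{w\}$.

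For (\ref{relrk:Sha<B}), I would invoke Poitou--Tate duality in the form adapted to the cyclotomically ramified local conditions at $v \in P$. The kernel $\Sha^2$ is dual to a Selmer-type subgroup of $H^1(\mathrm{Gal}(\overline{k}/k), \mu_p)$ consisting of classes whose local restrictions vanish in $H^1(\overline{G}_v, \mu_p)$ for each $v \in S^\ast$ and in $H^1(G_v^{\mathrm{cr}}, \mu_p)$ for each $v \in P$. Via the Kummer isomorphism $H^1(\mathrm{Gal}(\overline{k}/k), \mu_p) \simeq k^\times/(k^\times)^p$, these conditions cut out precisely the image of $\widetilde{B}_S/(k^\times)^p$ as described in (\ref{V_B}), whence the inequality.

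The main obstacle will be verifying that the orthogonal complement, under the local Tate pairing, of $\mathrm{Ker}(H^1(\overline{G}_v) \to H^1(G_v^{\mathrm{cr}}))$ matches, via Kummer, the norm subgroup $N_{k^{\mathrm{cyc}}_{v,1}/k_v}(k^{\mathrm{cyc}}_{v,1})^\times$ modulo $p$-th powers at each $v \in P$ — i.e.\ precisely the cyclotomic ramification condition defining $\widetilde{B}_S$ — and, analogously, that the archimedean local contribution when $p=2$ is matched correctly on both the cohomological and Kummer sides with the convention $\widetilde{\mathcal U}_v = \widehat{k_v^\times}$. Once these compatibilities are in place, the counting of local dimensions and the duality statement assemble into the asserted bounds.
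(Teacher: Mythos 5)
Your treatment of the second inequality (\ref{relrk:relrk}) is correct and is essentially the paper's argument: the bound is the tautological one coming from the definition of $\Sha^2$ as the kernel of the localization map, combined with $\dim_{\mathbb F_p}H^2(\overline{G}_v)=1$ for $v\in S^{\ast}$ and $\dim_{\mathbb F_p}H^2(G_v^{\mathrm{cr}})=1$ for $v\in P$, with $|S^{\ast}|=|S|-(1-\delta_{S,\emptyset})\theta$ accounting for the omitted prime $w$.

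For (\ref{relrk:Sha<B}), however, your route has a genuine gap. Poitou--Tate duality and its Selmer-group refinement are statements about the cohomology of the full Galois group $\mathrm{Gal}(\overline{k}/k)$ (or of $\mathrm{Gal}(k_T/k)$ with $T$ containing the primes above $p$), whereas $\Sha^2(\widetilde{G}_S(k))$ is defined via the cohomology of the pro-$p$ group $\widetilde{G}_S(k)$, a quotient of $\overline{G}$ cut out by the non-standard ``cyclotomically ramified'' condition at $P$, under the standing hypothesis $S\cap P=\emptyset$. This is precisely the setting in which the comparison between pro-$p$ group cohomology and Galois cohomology is not automatic, and bridging it is the substance of the paper's proof: one realizes $\widetilde{G}_S(k)\simeq\overline{G}/D_S$, runs the five-term exact sequences for $D_S\subset\overline{G}$ and $D_v\subset\overline{G}_v$ (diagrams (\ref{diag:5term}) and (\ref{diag:Sha})), uses the injectivity of $\mathrm{Res}:H^2(\overline{G})\to\bigoplus_v H^2(\overline{G}_v)$ to identify $\Sha^2(\widetilde{G}_S(k))$ with $\mathrm{Ker}\,\phi_2$ and embed it into $\mathrm{Coker}\,\phi$, and then computes $\mathrm{Hom}(\mathrm{Coker}\,\phi,\mathbb F_p)\simeq\widetilde{V}_S/(\overline{k^{\times}})^p\simeq\widetilde{B}_S/(k^{\times})^p$ id\`elically, the Hasse principle $\overline{k^{\times}}\cap\mathcal J_k^p=(\overline{k^{\times}})^p$ entering at the last step. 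None of these substitutes appears in your outline. In addition, the local bookkeeping you defer to the ``main obstacle'' is not well posed as written: for $v\in P$ with $\zeta_p\notin k_v$ the module $\mu_p$ is not a $\overline{G}_v$-module, and the dual local condition at $v\in P$ is membership in the annihilator of the inflation image of $H^1(G_v^{\mathrm{cr}})$ under the local Tate pairing, not ``vanishing in $H^1(G_v^{\mathrm{cr}},\mu_p)$''. A duality proof could likely be completed, but it would require both (i) a justified passage from $H^2(\widetilde{G}_S(k))$ to the Selmer structure over the full Galois group and (ii) the explicit identification of that annihilator with $(N_{k^{\mathrm{cyc}}_{v,1}/k_v}(k^{\mathrm{cyc}}_{v,1})^{\times}\cap U_v)(k_v^{\times})^p/(k_v^{\times})^p$; both are currently missing.
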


\begin{proof}
Following the proof of \cite[Theorem 3.5]{BLM13} (cf.\ also \cite[Th\'eor\`eme 4.1]{Sal08}), 
we give a summary proof valid also for $p=2$. 
Let $\overline{I}_v$ be the inertia subgroup of $\overline{G}_v$. 
Since $\overline{G}_v$ is the decomposition subgroup of $\overline{G}$ for $\overline{v}|v$, 
and since $k_v^{\mathrm{cyc}}=k^{\mathrm{cyc}}k_v$, 
$\overline{I}_v \cap \mathrm{Gal}(k_v^{\mathrm{pro}\hbox{\scriptsize -}p}/k_v^{\mathrm{cyc}})$ 
is the inertia subgroup of 
$\mathrm{Gal}(k^{\mathrm{pro}\hbox{\scriptsize -}p}/k^{\mathrm{cyc}})$ 
for $\overline{v}$. 
For each $v$, we put $H_v = \overline{G}_v/D_v$ with $D_v$ defined as follows: 
\begin{itemize}

\item[$\cdot$]
$D_v=\overline{I}_v$ if $v \not\in S \cup P$. 
Then $H_v \simeq \mathbb Z_p$ if $v \nmid \infty$, and $H_v \simeq 1$ if $v|\infty$.

\item[$\cdot$]
$D_v = \overline{I}_v \cap \mathrm{Gal}(k_v^{\mathrm{pro}\hbox{\scriptsize -}p}/k_v^{\mathrm{cyc}})$ if $v \in P$. 
Then $H_v=G_v^{\mathrm{cr}} \simeq \mathbb Z_p^2$. 

\item[$\cdot$]
$D_v=\{1\}$ if $v \in S$. 
Then $H_v=\overline{G}_v$. 

\end{itemize}
Then $\widetilde{G}_S=\widetilde{G}_S(k) \simeq \overline{G}/D_S$, 
where 
$D_S=\langle \bigcup_{v \not\in S} D_v \rangle_{\overline{G}}$ 
is the minimal closed normal subgroup of $\overline{G}$ containing $D_v$ for all $v \not\in S$. 
Put 
\[
P_0=\{v \in P\,|\, 0 \rightarrow H^2(H_v) \stackrel{\mathrm{inf}}{\longrightarrow} H^2(\overline{G}_v) \hbox{\ (exact)} \} 
\]
(For example, $P_0=\emptyset$ if $p \neq 2$, and $P_0=P$ if $p=2$ and $k=\mathbb Q$. 
cf.\ \cite[Lemma 2.2]{BLM13} and \cite[Proposition 4.8 (4)]{Sal08}.) 
As in \cite[\S 4]{Sal08}, 
the five term exact sequence induces the exact rows of 
commutative diagrams 
\begin{align}\label{diag:5term}
\entrymodifiers={+!!<0pt,\fontdimen22\textfont2>}
\xymatrix@R=0.5\baselineskip{
0 \ar[r] & H^1(\overline{G})/H^1(\widetilde{G}_S) \ar^-{\mathrm{res}}[r] \ar^-{\phi}[d] 
& H^1(D_S)^{\widetilde{G}_S} \ar^-{\mathrm{tg}}[r] \ar^-{\phi_1}[d] & \mathrm{Ker}(\mathrm{Inf}_1) \ar[r] \ar^-{\phi_2}[d] & 0\\
0 \ar[r] & \underset{v \not\in S}{\bigoplus} H^1(\overline{G}_v)/H^1(H_v) \ar^-{\mathrm{res}}[r] & \underset{v \not\in S}{\bigoplus} H^1(D_v)^{H_v} \ar^-{\mathrm{tg}}[r] & \mathrm{Ker}(\mathrm{Inf}_2) \ar[r] & 0 
}
\end{align}
and 
\begin{align}\label{diag:Sha}
\entrymodifiers={+!!<0pt,\fontdimen22\textfont2>}
\xymatrix@R=0.6\baselineskip{
0 \ar[r] & \mathrm{Ker}(\mathrm{Inf}_1) \ar[r] \ar[d] 
\ar@/_{4.5pc}/[dd]_>>>>>>>>>{\phi_2}|(.47)\hole 
& H^2(\widetilde{G}_S) \ar^-{\mathrm{Inf}_1}[r] \ar^-{\mathrm{loc}'}[d] & H^2(\overline{G}) \ar^-{\mathrm{Res}}[d] \\
0 \ar[r] & \underset{v \in P \setminus P_0}{\bigoplus} H^2(H_v) \ar[r] \ar^-{\simeq}[d] & \underset{v \not\in S \setminus S^*}{\bigoplus} H^2(H_v) \ar^-{\mathrm{inf}}[r] \ar[d] & \underset{v \not\in S \setminus S^*}{\bigoplus} H^2(\overline{G}_v) \ar[d] \\
0 \ar[r] & \mathrm{Ker}(\mathrm{Inf}_2) \ar[r] & \underset{v \not\in S}{\bigoplus} H^2(H_v) \ar^-{\mathrm{Inf}_2}[r] & \underset{v \not\in S}{\bigoplus} H^2(\overline{G}_v) 
}
\end{align}
where we note the following: 
The injective map $\phi_1$ is well-defined 
as the dual of $\phi_1^{\vee} : \textstyle{\prod}_{v \not\in S} D_v/D_v^p[D_v,\overline{G}_v] \twoheadrightarrow D_S/D_S^p[D_S,\overline{G}]$, 
and hence $\phi$ is also well-defined. 
Since $H^2(H_v) \simeq 0$ if $v \not\in S \cup P$, 
the map $\mathrm{loc}'$ is also well-defined. 
Since $\mathrm{Res}$ is well-defined and injective (cf.\ \cite[Theorems 11.1 and 11.2]{Koch}), 
we have 
$\Sha^2(\widetilde{G}_S(k))=\mathrm{Ker}(\mathrm{loc}') \simeq \mathrm{Ker}\,\phi_2$ 
by the snake lemma for a diagram induced from the upper half of (\ref{diag:Sha}). 
The snake lemma for (\ref{diag:5term}) yields an injective homomorphism 
\[
\Sha^2(\widetilde{G}_S(k)) \hookrightarrow \mathrm{Coker}\,\phi . 
\]
There is also a commutative diagram 
\[
\entrymodifiers={+!!<0pt,\fontdimen22\textfont2>}
\xymatrix@R=0.9\baselineskip{
& \widetilde{\mathcal{U}}_S/\widetilde{\mathcal{U}}_S^p \ar^-{\xi}[r] \ar^-{\xi'}[d] & \mathcal{J}_k/\overline{k^{\times}}\mathcal{J}_k^p  \ar[r] \ar^-{\simeq}[d] & \mathcal{J}_k/\widetilde{\mathcal{U}}_S \overline{k^{\times}}\mathcal{J}_k^p \ar[r] \ar^-{\simeq}[d] & 0 \\
0 \ar[r] & D_S/(D_S \cap \overline{G}^p[\overline{G},\overline{G}]) \ar[r] & \overline{G}^{\mathrm{ab}}/p  \ar[r] & \widetilde{G}_S^{\mathrm{ab}}/p \ar[r] & 0 \\
0 \ar[r] & \underset{v \not\in S}{\textstyle{\prod}} D_v/D_v^p[\overline{G}_v,\overline{G}_v] \ar[r] \ar^-{\phi^{\vee}}[u] \ar@/^{6pc}/[uu]^>>>>>{\simeq}|(.53)\hole & \underset{v \not\in S}{\textstyle{\prod}} \overline{G}_v^{\mathrm{ab}}/p \ar[r] & \underset{v \not\in S}{\textstyle{\prod}} H_v/p \ar[r] & 0 
}
\]
with exact rows, 
where we note that $D_v \cap \overline{G}_v^p[\overline{G}_v,\overline{G}_v] 
=D_v^p[\overline{G}_v,\overline{G}_v]$ if $v \not\in S$ by \cite[Lemma 3.9]{BLM13}. 
Since the continuous surjective homomorphism $\phi^{\vee}$ is the dual of $\phi$ in (\ref{diag:5term}), 
and since $\overline{k^{\times}} \cap \mathcal{J}_k^p = (\overline{k^{\times}})^p$ 
by Hasse principle (cf.\ e.g.\ \cite[II, Theorem 6.3.3]{Gra}) and (\ref{kkp_kkp}), 
we have 
\begin{align*}
\mathrm{Hom}(\mathrm{Coker}\,\phi,\mathbb F_p) 
&\simeq \mathrm{Ker}\,\phi^{\vee} \simeq \mathrm{Ker}\,\xi' \simeq \mathrm{Ker}\,\xi 
\\
&\simeq 
(\widetilde{\mathcal{U}}_S \cap \overline{k^{\times}}\mathcal{J}_k^p)\mathcal{J}_k^p/\mathcal{J}_k^p
=
\widetilde{V}_S\mathcal{J}_k^p/\mathcal{J}_k^p
\simeq 
\widetilde{V}_S/(\overline{k^{\times}})^p . 
\end{align*}
Therefore the injection 
$\Sha^2(\widetilde{G}_S(k)) \hookrightarrow \mathrm{Hom}(\widetilde{B}_S/(k^{\times})^p,\mathbb F_p)$ 
yields the inequality (\ref{relrk:Sha<B}).  
If $v \in S \cup P$ and $v \nmid \infty$, then $H_v \simeq \mathbb Z_p \rtimes \mathbb Z_p$ (cf.\ \cite[Theorem 10.2]{Koch}). 
Since $H^2(H_v) \simeq \mathbb Z/p\mathbb Z$ if $v \in S^* \cup P$, 
the inequality (\ref{relrk:relrk}) also holds by the definition of $\Sha^2(\widetilde{G}_S(k))$. 
\end{proof}

Several consequences are also refined when $p=2$ as follows. 

\begin{corollary}[cf.\ {\cite[Corollarie 4.3]{Sal08}}]
The second partial Euler-Poincar\'e characteristic $\chi_2(\widetilde{G}_S(k))=1-\widetilde{d}_S+\widetilde{r}_S$ satisfies 
\[
\chi_2(\widetilde{G}_S(k)) \le 1+\dim_{\mathbb F_p} E_k/E_k^p - (1-\delta_{S,\emptyset})\theta . 
\]
In particular, $\chi_2(\widetilde{G}_S(\mathbb Q)) \le 1$. 
\end{corollary}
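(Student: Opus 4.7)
The plan is to derive the main inequality by directly combining the explicit formula for the generator rank $\widetilde{d}_S$ provided by Proposition \ref{prop:genrk} with the bound on the relation rank $\widetilde{r}_S$ provided by Theorem \ref{thm:relrk}. First I would write
\[
\widetilde{d}_S = |S| + |P| + \dim_{\mathbb F_p} \widetilde{B}_S/(k^{\times})^p - \dim_{\mathbb F_p} E_k/E_k^p
\]
from Proposition \ref{prop:genrk}, and combine the two inequalities from Theorem \ref{thm:relrk} to get
\[
\widetilde{r}_S \le \dim_{\mathbb F_p} \widetilde{B}_S/(k^{\times})^p + |S| - (1-\delta_{S,\emptyset})\theta + |P| .
\]
Substituting both into the definition $\chi_2(\widetilde{G}_S(k)) = 1 - \widetilde{d}_S + \widetilde{r}_S$, the terms $|S|$, $|P|$, and $\dim_{\mathbb F_p}\widetilde{B}_S/(k^{\times})^p$ all cancel, which yields the asserted bound
\[
\chi_2(\widetilde{G}_S(k)) \le 1 + \dim_{\mathbb F_p} E_k/E_k^p - (1-\delta_{S,\emptyset})\theta .
\]

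For the specialization to $k=\mathbb Q$, I would compute the two remaining quantities: since $E_{\mathbb Q}=\{\pm 1\}$, the dimension $\dim_{\mathbb F_p} E_{\mathbb Q}/E_{\mathbb Q}^p$ equals $1$ when $p=2$ and $0$ when $p$ is odd; and since $\zeta_p \in \mathbb Q$ precisely when $p=2$, we have $\theta = 1$ when $p=2$ and $\theta = 0$ when $p$ is odd. For $p$ odd, the bound reads $1 + 0 - 0 = 1$; for $p=2$ with $S \neq \emptyset$, it reads $1 + 1 - 1 = 1$.

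The one edge case that does not fall out of the generic bound is $p=2$ with $S=\emptyset$, where the formal bound degrades to $2$. Here I would argue directly: for $p=2$, the condition of tame ramification at $P=\{2\}$ over $\mathbb Q^{\mathrm{cyc}}$ forces unramifiedness at $2$, so $(\mathbb Q^{\mathrm{cyc}})_{\emptyset}$ is contained in the maximal pro-$2$-extension of $\mathbb Q^{\mathrm{cyc}}$ unramified everywhere including infinity, and the Galois group $\widetilde{G}_{\emptyset}(\mathbb Q)$ reduces to (a quotient of) $\mathrm{Gal}(\mathbb Q^{\mathrm{cyc}}/\mathbb Q) \simeq \mathbb Z_2$, for which $\chi_2 = 0$. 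I expect this special case to be the only technical wrinkle; the main computation is pure bookkeeping and presents no real obstacle.
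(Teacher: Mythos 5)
Your proposal is correct and follows essentially the same route as the paper: substitute the formula of Proposition \ref{prop:genrk} and the combined inequalities of Theorem \ref{thm:relrk} into $\chi_2=1-\widetilde{d}_S+\widetilde{r}_S$, then dispose of the one case ($p=2$, $S=\emptyset$, $k=\mathbb Q$) not covered by the generic bound via the standard fact that $\widetilde{G}_{\emptyset}(\mathbb Q)\simeq\mathbb Z_p$, exactly as the paper does. The only nitpick is your phrase ``reduces to (a quotient of) $\mathrm{Gal}(\mathbb Q^{\mathrm{cyc}}/\mathbb Q)$'': the group is a priori an \emph{extension} of $\mathbb Z_2$ by $G_{\emptyset}(\mathbb Q^{\mathrm{cyc}})$, and one needs the (classical) triviality of the $2$-class groups in the cyclotomic tower of $\mathbb Q$ to conclude it is $\mathbb Z_2$.
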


\begin{proof}
The inequalities are induced from 
Proposition \ref{prop:genrk} and Theorem \ref{thm:relrk}, 
where we note that 
$\widetilde{G}_{\emptyset}(\mathbb Q) \simeq G_{\{p\}}(\mathbb Q) \simeq \mathbb Z_p$. 
\end{proof}

\begin{corollary}[cf.\ {\cite[Corollary 3.11]{BLM13}}]
If 
\[
|S|+|P| \ge \dim_{\mathbb F_p}E_k/E_k^p+2+2\sqrt{1+\dim_{\mathbb F_p}E_k/E_k^p -(1-\delta_{S,\emptyset})\theta} , 
\]
then $\widetilde{G}_S(k)$ and $G_S(k^{\mathrm{cyc}})$ are not $p$-adic analytic. 
In particular, 
$\widetilde{G}_S(\mathbb Q)$ and $G_S(\mathbb Q^{\mathrm{cyc}})$ are not $p$-adic analytic if $|S| \ge 3+\delta_{p,2}$. 
Moreover, 
$\widetilde{G}_{\emptyset}(k)$ and $G_{\emptyset}(k^{\mathrm{cyc}})$ are not $p$-adic analytic if 
$k/\mathbb Q$ is a totally imaginary extension of degree 
$[k:\mathbb Q] \ge 4(3+\theta)$ in which $p$ splits completely. 
\end{corollary}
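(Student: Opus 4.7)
The plan is to combine the rank formulas from Proposition~\ref{prop:genrk} and Theorem~\ref{thm:relrk} with a Golod--Shafarevich type criterion for non-$p$-adic-analyticity. Write $e=\dim_{\mathbb F_p}E_k/E_k^p$, $\epsilon=(1-\delta_{S,\emptyset})\theta$, and $b=\dim_{\mathbb F_p}\widetilde B_S/(k^\times)^p$. First I would eliminate $b$ from the two statements: Proposition~\ref{prop:genrk} gives $\widetilde d_S=|S|+|P|+b-e$, while Theorem~\ref{thm:relrk} (through the inequality (\ref{relrk:Sha<B})) yields $\widetilde r_S\le b+|S|+|P|-\epsilon$; subtracting produces the clean bound
\[
\widetilde r_S\le \widetilde d_S+e-\epsilon,
\]
and $b\ge 0$ gives the lower estimate $\widetilde d_S\ge |S|+|P|-e$.

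Next I would invoke the Golod--Shafarevich inequality together with the Wilson--Zelmanov refinement: a finitely generated pro-$p$ group whose generator rank $d$ and relation rank $r$ satisfy $d^{2}\ge 4r$ contains a nonabelian free pro-$p$ subquotient, and is in particular not $p$-adic analytic. The hypothesis $|S|+|P|\ge e+2+2\sqrt{1+e-\epsilon}$, combined with $\widetilde d_S\ge |S|+|P|-e$, gives $\widetilde d_S\ge 2+2\sqrt{1+e-\epsilon}$; squaring and rearranging then yields
\[
\widetilde d_S^{\,2}\ \ge\ 4\widetilde d_S+4(e-\epsilon)\ \ge\ 4\widetilde r_S,
\]
which is exactly what the criterion needs.

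To transfer non-analyticity from $\widetilde G_S(k)$ to $G_S(k^{\mathrm{cyc}})$, I would use the short exact sequence
\[
1\longrightarrow G_S(k^{\mathrm{cyc}})\longrightarrow \widetilde G_S(k)\longrightarrow \mathrm{Gal}(k^{\mathrm{cyc}}/k)\longrightarrow 1
\]
together with the Lazard fact that $p$-adic analyticity is closed under extensions: since $\mathrm{Gal}(k^{\mathrm{cyc}}/k)\simeq\mathbb Z_p$ is analytic, if $G_S(k^{\mathrm{cyc}})$ were analytic then so would $\widetilde G_S(k)$ be, contradicting what was just proved.

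Finally, the two specializations are purely arithmetic. For $k=\mathbb Q$ one has $|P|=1$, $e=\dim_{\mathbb F_p}\{\pm1\}/\{\pm1\}^p=\delta_{p,2}$, and $\theta=\delta_{p,2}$, so in every case (both for $S=\emptyset$ and $S\ne\emptyset$) the main inequality collapses to $|S|\ge 3+\delta_{p,2}$. For $k$ totally imaginary in which $p$ splits completely and $S=\emptyset$, Dirichlet's unit theorem gives $e=[k:\mathbb Q]/2-1+\theta$, $\epsilon=0$, and $|P|=[k:\mathbb Q]$; a one-line quadratic verification then shows that $[k:\mathbb Q]\ge 4(3+\theta)$ is already stronger than $[k:\mathbb Q]\ge e+2+2\sqrt{1+e}$. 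The only real obstacle is citing the correct Golod--Shafarevich/Wilson--Zelmanov statement in the form $d^{2}\ge 4r\Rightarrow $ not $p$-adic analytic; the rest is bookkeeping of ranks and an elementary quadratic estimate.
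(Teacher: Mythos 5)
Your proposal is correct and follows essentially the same route as the paper: the rank formulas of Proposition \ref{prop:genrk} and Theorem \ref{thm:relrk} are combined to get $\widetilde{r}_S \le \widetilde{d}_S + \dim_{\mathbb F_p}E_k/E_k^p - (1-\delta_{S,\emptyset})\theta$ and $\widetilde{d}_S \ge |S|+|P|-\dim_{\mathbb F_p}E_k/E_k^p$, the Golod--Shafarevich inequality for pro-$p$ groups of finite rank then rules out analyticity, and the passage to $G_S(k^{\mathrm{cyc}})$ uses the extension by $\mathbb Z_p$ exactly as in the paper. The only cosmetic difference is that you phrase the criterion directly ($d^2\ge 4r$ implies non-analytic, with the free-subquotient remark being an unneeded extra) while the paper argues by contradiction from $r>d^2/4$; the arithmetic of the two specializations checks out as you describe.
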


\begin{proof}
Note that $\dim_{\mathbb F_p}E_k/E_k^p \ge 1$ if $\theta=1$. 
The inequality $1+\dim_{\mathbb F_p}E_k/E_k^p -(1-\delta_{S,\emptyset})\theta \le 0$ can not occur. 
Hence $\widetilde{d}_S \ge |S|+|P|-\dim_{\mathbb F_p}E_k/E_k^p \ge 3$ 
by Proposition \ref{prop:genrk} and the assumption. 
Suppose that $\widetilde{G}_S(k)$ is $p$-adic analytic. 
Then $\widetilde{G}_S(k)$ has `finite rank' (cf.\ \cite[Theorem 8.36]{DDMS}), 
and hence the Golod-Shafarevich inequality 
$\widetilde{r}_S > \frac{1}{4}(\widetilde{d}_S)^2$ 
is satisfied (cf.\ \cite[Theorem D1]{DDMS}). 
By Proposition \ref{prop:genrk} and Theorem \ref{thm:relrk}, 
we have 
\[
(\widetilde{d}_S)^2 < 4\widetilde{r}_S \le 4(\widetilde{d}_S+\dim_{\mathbb F_p}E_k/E_k^p -(1-\delta_{S,\emptyset})\theta) , 
\]
and hence 
\[
|S|+|P|-\dim_{\mathbb F_p}E_k/E_k^p \le \widetilde{d}_S < 2+2\sqrt{1+\dim_{\mathbb F_p}E_k/E_k^p -(1-\delta_{S,\emptyset})\theta} . 
\]
This contradicts the assumption. 
Therefore $\widetilde{G}_S(k)$ is not $p$-adic analytic. 
Since $\widetilde{G}_S(k)/G_S(k^{\mathrm{cyc}}) \simeq \mathbb Z_p$, 
$G_S(k^{\mathrm{cyc}})$ is also not $p$-adic analytic. 
(cf.\ \cite[Exercise 3.1, Corollary 8.33, Theorem 8.36]{DDMS}). 
\end{proof}

\begin{remark}
All sets $S$ with prometacyclic $G_S(\mathbb Q^{\mathrm{cyc}})$ 
have been characterized arithmetically 
(cf.\ \cite{IM14,Miz**,MO13}). 
Then $\widetilde{G}_S(\mathbb Q)$ is $p$-adic analytic, and actually there exists such $S$ of cardinality $|S| = 2+\delta_{p,2}$. 
There are other examples of prometacyclic $G_S(k^{\mathrm{cyc}})$ 
for $p=2$ and quadratic $k/\mathbb Q$ 
(cf.\ \cite{Miz05,Miz10a,Sal10} and Theorem \ref{thm:d=2:real}). 
Moreover, all imaginary quadratic fields $k$ with prometacyclic (or abelian) $G_{\emptyset}(k^{\mathrm{cyc}})$ have been characterized 
(cf.\ \cite{Miz10b,MO10,Oka06}). 
There also exists $p$ such that $G_{\emptyset}(\mathbb Q(\zeta_p)^{\mathrm{cyc}})$ is abelian and not procyclic (cf.\ \cite{Sha08}). 
\end{remark}

\begin{example}[cf.\ {\cite[Theorems 7.3 and 9.2]{Miz**}}]
Suppose that $p=2$ and $k=\mathbb Q$. 
If $S=\{\ell_1,\ell_2\}$ ($\ell_1 \neq \ell_2$) with $\ell_1 \equiv \ell_2 \equiv 3 \pmod{8}$, 
then $\widetilde{G}_S(\mathbb Q) \simeq \mathbb Z_2 \rtimes \mathbb Z_2$, 
$\widetilde{d}_S=2$, $\widetilde{r}_S=1$, and $\chi_2(\widetilde{G}_S(\mathbb Q))=0$. 
If $S=\{\ell,\infty\}$ with $\ell \equiv 7 \pmod{16}$, 
then $\widetilde{G}_S(\mathbb Q) \simeq D_{2^{\infty}} \rtimes \mathbb Z_2$, 
$\widetilde{d}_S=\widetilde{r}_S=2$, and $\chi_2(\widetilde{G}_S(\mathbb Q))=1$, 
where $G_S(\mathbb Q^{\mathrm{cyc}}) \simeq D_{2^{\infty}} \simeq \mathbb Z_2 \rtimes (\mathbb Z/2\mathbb Z)$ is a prodihedral pro-$2$ group. 
\end{example}

\begin{example}[cf.\ {\cite[Theorem 1]{MO13}}]
Suppose that $p \neq 2$, $k=\mathbb Q$ and $S=\{\ell_1,\ell_2\}$ 
with $\ell_1 \not\equiv 1 \pmod{p^2}$, $\ell_2 \not\equiv 1 \pmod{p^2}$ 
such that 
$G_{\emptyset}(K^{\mathrm{cyc}}) \simeq \{1\}$ for the $p$-extension $K=\mathbb Q_{\{\ell_1\}}\mathbb Q_{\{\ell_2\}}$ of degree $p^2$. 
(All such $S$ have been characterized arithmetically in \cite{Gen00}.) 
Then $\widetilde{G}_S(\mathbb Q) \simeq ((\mathbb Z/p^2\mathbb Z) \rtimes \mathbb Z_p) \rtimes \mathbb Z_p$, 
$\widetilde{d}_S=\widetilde{r}_S=3$, and $\chi_2(\widetilde{G}_S(\mathbb Q))=1$. 
Moreover, there is a minimal presentation 
\[
\entrymodifiers={+!!<0pt,\fontdimen22\textfont2>}
\xymatrix{
1 \ar[r] & R \ar[r] & F \ar[r] & \widetilde{G}_S(\mathbb Q) \ar[r] & 1
}
\]
with a free pro-$p$ group $F$ generated by $\{a,b,c\}$, 
and the normal subgroup $R$ normally generated by 
$\{ a^{-p}[a,b], [a,c], a^{-p}b^{-p}[b,c] \}$. 
One can see that $a^{p^2} \in R$ from a calculation of $c^{-1}(a^{-p}[a,b])c \in R$. 
There is also a similar example for $p=2$ (cf.\ \cite[Theorem 2]{MO13}). 
\end{example}

\section{Finite presentation}\label{sec:pres}

\subsection{Koch type presentation.} 

Suppose that $k=\mathbb Q$. 
For each prime number $\ell \in S$, we fix an integer $\alpha_{\ell}$ 
such that $\alpha_{\ell} +\ell\mathbb Z$ is a generator of cyclic group $(\mathbb Z/\ell\mathbb Z)^{\times}$. 
For a pair $(\ell,\ell')$ of prime numbers $\ell$, $\ell' \in S \cup \{p\}$, 
we define the linking number $\mathrm{lk}(\ell,\ell')$ as follows: 
If $\ell \neq \ell' \equiv 1 \pmod{p}$, 
then $\mathrm{lk}(\ell,\ell')$ is an integer such that 
\[
\ell^{-1} \equiv \alpha_{\ell'}^{\mathrm{lk}(\ell,\ell')} \mod{\ell'}
\]
and $0 \le \mathrm{lk}(\ell,\ell') < \ell'$. 
If $\ell \neq \ell'=p$, 
then $\mathrm{lk}(\ell,p) \in \mathbb Z_p$ is a $p$-adic integer satisfying 
\[
\ell=
\left\{
\begin{array}{ll}
(1+p)^{\mathrm{lk}(\ell,p)} & \hbox{if } p \neq 2, \\
(-1)^{\frac{\ell -1}{2}} 5^{\mathrm{lk}(\ell,p)} & \hbox{if } p=2. 
\end{array}
\right.
\]
If $\ell=\ell'$, we put $\mathrm{lk}(\ell,\ell')=0$. 

\begin{theorem}\label{thm:presQ}
Assume that $k=\mathbb Q$ and 
\[
S=\left\{
\begin{array}{ll}
\{\ell_1,\cdots,\ell_d\} & \hbox{if } p \neq 2, \\ 
\{\ell_1,\cdots,\ell_d,\infty\} \hbox{ or } \{\ell_1,\cdots,\ell_d,q\} & \hbox{if } p=2, 
\end{array}
\right.
\]
where $\ell_i \equiv 1 \pmod{p}$ for $1 \le i \le d$, and $q \equiv 3 \pmod{4}$. 
Put $\ell_0=p$. 
Then $\widetilde{G}_S(\mathbb Q)$ has a minimal presentation
\[
\entrymodifiers={+!!<0pt,\fontdimen22\textfont2>}
\xymatrix{
1 \ar[r] & R \ar[r] & F \ar[r]^-{\pi} & \widetilde{G}_S(\mathbb Q) \ar[r] & 1
}
\]
where $F=\langle x_0,x_1,\cdots,x_d \rangle$ is a free pro-$p$ group 
with $d+1$ generators $x_i$ such that 
$\pi(x_i)$ 
generates the inertia group of a prime $\widetilde{\ell}_i$ of $(\mathbb Q^{\mathrm{cyc}})_S$ lying over $\ell_i$, 
and $R=\langle r_0,r_1,\cdots,r_d \rangle_F$ is a normal subgroup of $F$ 
normally generated by $d+1$ relations $r_i$ of the form 
\[
r_i=\left\{
\begin{array}{rl}
[x_0^{-1},y_0^{-1}] & \hbox{if } i=0, \\ 
x_i^{\ell_i-1}[x_i^{-1},y_i^{-1}] & \hbox{if } 1 \le i \le d
\end{array}
\right.
\]
with $y_i \in F$ such that $\pi(y_i)$ is a Frobenius automorphism of $\widetilde{\ell}_i$ in $\widetilde{G}_S(\mathbb Q)$, and 
\[
y_i \equiv 
\left\{
\begin{array}{lll}
\underset{j=0}{\stackrel{d}{\textstyle{\prod}}} x_j^{\mathrm{lk}(\ell_i,\ell_j)} & \mod{[F,F]} & \hbox{if $p \neq 2$ or $\infty \in S$}, \\
x_0^{\mathrm{lk}(\ell_i,\ell_0)} \underset{j=1}{\stackrel{d}{\textstyle{\prod}}} x_j^{\mathrm{lk}(\ell_i,\ell_j)+\frac{\ell_j-1}{2}\mathrm{lk}(\ell_i,q)} & \mod{[F,F]} & \hbox{otherwise.} 
\end{array}
\right.
\]
\end{theorem}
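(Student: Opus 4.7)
The plan is to deduce the presentation from the rank bounds of Section~\ref{sec:genrelrk} by exhibiting $d+1$ explicit generators and $d+1$ explicit relators of the claimed shape, and then invoking minimality. Under the hypotheses on $S$, Remark~\ref{rem:trivB} gives $\widetilde{B}_S=(\mathbb{Q}^{\times})^p$, so $\Sha^2(\widetilde{G}_S(\mathbb{Q}))=0$ by Theorem~\ref{thm:relrk} and $\widetilde{d}_S=d+1$ by Proposition~\ref{prop:genrk}; hence also $\widetilde{r}_S\le d+1$. Consequently any pro-$p$ presentation of $\widetilde{G}_S(\mathbb{Q})$ on $d+1$ generators by $d+1$ normal relators is automatically minimal, and it suffices to produce one of the asserted form.

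I would then choose lifts $x_i\in F$ so that $\pi(x_i)$ topologically generates the inertia subgroup at $\widetilde{\ell}_i$. Under the reciprocity of Proposition~\ref{prop:idele}, these inertia classes land in pairwise independent $\mathbb{F}_p$-summands of $\widetilde{G}_S(\mathbb{Q})^{\mathrm{ab}}/p=H^1(\widetilde{G}_S(\mathbb{Q}))$, so by Burnside $\pi$ is surjective. For the relators I would appeal to the local pro-$p$ Galois group structure: at $\ell_0=p$ the maximal cyclotomically ramified quotient $G_p^{\mathrm{cr}}\simeq\mathbb{Z}_p^2$ is abelian, yielding $r_0=[x_0^{-1},y_0^{-1}]$; at each tame $\ell_i\equiv 1\pmod p$, the local pro-$p$ Galois group $\mathrm{Gal}(\mathbb{Q}_{\ell_i}^{\mathrm{pro}\text{-}p}/\mathbb{Q}_{\ell_i})$ is the one-relator pro-$p$ group $\langle\tau,\sigma\mid\sigma\tau\sigma^{-1}=\tau^{\ell_i}\rangle$, whose defining relation rewrites as $x_i^{\ell_i-1}[x_i^{-1},y_i^{-1}]=1$ once $y_i\in F$ is chosen lifting a Frobenius at $\widetilde{\ell}_i$. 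That the $d+1$ lifted relators normally generate $R$ follows from a rank check: their transgressed images account, via the injective localization $H^2(\widetilde{G}_S(\mathbb{Q}))\hookrightarrow\bigoplus_{v\in S^{\ast}\cup P}H^2(H_v)$ extracted from the proof of Theorem~\ref{thm:relrk}, for a basis of the $(d+1)$-dimensional $\mathbb{F}_p$-space $H^2(\widetilde{G}_S(\mathbb{Q}))$.

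The computational core, and the step where I expect the real work, is the expression of $y_i$ modulo $[F,F]$ by linking numbers. The class of $\pi(y_i)$ in $\widetilde{G}_S(\mathbb{Q})^{\mathrm{ab}}$ is, by Proposition~\ref{prop:idele}, the image of the uniformizer idele $\pi_{\ell_i}$; multiplying by the principal idele $\ell_i^{-1}\in\overline{\mathbb{Q}^{\times}}$ converts it into the idele whose $v$-component equals $\ell_i^{-1}$ for every $v\neq \ell_i$ and $1$ at $v=\ell_i$. After reduction modulo $\widetilde{\mathcal{U}}_S$, only components at $v\in(S\cup P\cup\{\infty\})\setminus\{\ell_i\}$ can survive, and Example~\ref{exam:kvQp} together with the definitions of $\mathrm{lk}(\ell_i,\ell_j)$ identifies each surviving factor as the claimed power of a suitable $x_j$, giving the stated formula when $p$ is odd or when $p=2$ and $\infty\in S$. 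The subtle point, confined to the case $p=2$ with $q\in S$, is that the archimedean component of $\ell_i^{-1}$ is already trivial but the local contribution at $q$ yields an inertia element at $q$ which is \emph{not} one of our generators; using the global relation that the principal idele $-1$ decomposes locally as $\prod_{j=1}^{d}\alpha_{\ell_j}^{(\ell_j-1)/2}\cdot(\text{inertia at }q)\equiv 1$ in the abelianization, this inertia-at-$q$ contribution is rewritten as $\prod_{j=1}^{d}x_j^{\frac{\ell_j-1}{2}\mathrm{lk}(\ell_i,q)}$, producing exactly the extra exponent of the statement. The main obstacle I anticipate is the careful bookkeeping of signs and exponents through the pro-$2$ unit structure $\widehat{\mathbb{Q}_2^{\times}}\supset\widetilde{\mathcal{U}}_2=\{\pm 1\}$ of Example~\ref{exam:kvQp} and its interaction with the global $-1$ in the $q$-subcase.
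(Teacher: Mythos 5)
Your proposal is correct and follows essentially the same route as the paper: triviality of $\widetilde{B}_S$ (Remark \ref{rem:trivB}) giving $\widetilde{d}_S=d+1$ and $\Sha^2(\widetilde{G}_S(\mathbb Q))=0$, the id\`elic computation of the Frobenius class $\pi(y_i)$ modulo commutators via the principal id\`ele $\ell_i^{-1}$ (the paper's congruences (\ref{eqv:b_a}) and, for the $q$-subcase, (\ref{eqv:q_a}) for the principal id\`ele $-1$), the local one-relator structure of $\overline G_{\ell_i}$ and $G_p^{\mathrm{cr}}$ for the shape of the $r_i$, and injectivity of the localization map on $H^2$ to conclude that these $d+1$ relators normally generate $R$. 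The only cosmetic caveat is that minimality with $d+1$ relators is not "automatic" from the bound $\widetilde r_S\le d+1$ but rests on the $H^2$-localization step, which you do supply.
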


On the other hand, we suppose that 
$p \neq 2$ and $k$ is an imaginary quadratic field with class number $h_k \not\equiv 0 \pmod{p}$. 
Assume that $k \neq \mathbb Q(\sqrt{-3})$ if $p=3$. 
For each prime ideal $v \in S \cup P$, 
we fix a generator $\beta_v \in O_k$ of the principal ideal $\beta_v O_k=v^{h_k}$, 
where $O_k$ denotes the ring of algebraic integers in $k$. 
For each $v \in S$, 
we fix an element $\alpha_v \in O_k \cap U_v$ such that 
$(O_k/v)^{\times}=\langle \alpha_v \bmod{v} \rangle$.  
For each $v \in P$, 
we also fix $\alpha_v \in U_v$ such that 
$\mathcal{U}_v/\widetilde{\mathcal{U}}_v = \langle \iota_v(\alpha_v)\widetilde{\mathcal{U}}_v \rangle \simeq \mathbb Z_p$, 
where $\iota_v : k_v^{\times} \rightarrow \widehat{k_v^{\times}}$ is the natural homomorphism. 
If $k_v=\mathbb Q_p$, we choose $\alpha_v=(1+p)^{-1}$ (cf.\ Example \ref{exam:kvQp}). 
For a pair $(v,w)$ of prime ideals $v, w \in S \cup P$, 
the linking number $\mathrm{lk}(v,w)$ is defined as follows: 
If $v \neq w \in S$, then $\mathrm{lk}(v,w)$ is an integer such that 
\[
\beta_v^{-1} \equiv \alpha_w^{\mathrm{lk}(v,w)} \mod{w} 
\]
and $0 \le \mathrm{lk}(v,w) < |O_k/w|$. 
If $v \neq w \in P$, then $\mathrm{lk}(v,w) \in \mathbb Z_p$ is a $p$-adic integer satisfying 
\[
\iota_w(\beta_v)^{-1} \equiv \iota_w(\alpha_w)^{\mathrm{lk}(v,w)} \mod{\widetilde{\mathcal{U}}_w} . 
\]
If $v=w$, we put $\mathrm{lk}(v,w)=0$. 

\begin{theorem}\label{thm:presk}
Assume that $p \neq 2$ and $k$ is an imaginary quadratic field with class number $h_k \not\equiv 0 \pmod{p}$, 
and that $k \neq \mathbb Q(\sqrt{-3})$ if $p=3$. 
Then $\widetilde{G}_S(k)$ has a minimal presentation
\[
\entrymodifiers={+!!<0pt,\fontdimen22\textfont2>}
\xymatrix{
1 \ar[r] & R \ar[r] & F \ar[r]^-{\pi} & \widetilde{G}_S(k) \ar[r] & 1
}
\]
where $F$ is a free pro-$p$ group with generators $\{x_v\}_{v \in S \cup P}$ such that 
$\pi(x_v)$ 
generates the inertia group of a prime $\widetilde{v}$ of $(k^{\mathrm{cyc}})_S$ lying over $v$, 
and $R$ is a normal subgroup of $F$ 
normally generated by $\{r_v\}_{v \in S \cup P}$ of the form 
\[
r_v=\left\{
\begin{array}{rl}
[x_v^{-1},y_v^{-1}] & \hbox{if } v \in P, \\ 
x_v^{|N_{k/\mathbb Q}v|-1}[x_v^{-1},y_v^{-1}] & \hbox{if } v \in S 
\end{array}
\right.
\]
with $y_v \in F$ such that $\pi(y_v)$ is a Frobenius automorphism of $\widetilde{v}$ and 
\[
y_v \equiv 
\underset{w \in S \cup P}{\textstyle{\prod}} x_w^{\mathrm{lk}(v,w)} \mod{[F,F]} . 
\]
\end{theorem}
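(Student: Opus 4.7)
The plan is to mirror the argument behind Theorem \ref{thm:presQ}, using the hypotheses on $p$, $h_k$, and $k$ to kill every contribution from units, class numbers, and $p$-th roots of unity. Under these hypotheses the unit group satisfies $E_k/E_k^p = 0$, Remark \ref{rem:trivB} gives $\widetilde{B}_S/(k^{\times})^p = 0$, and $\zeta_p \notin k$ so $\theta = 0$. Plugging this into Proposition \ref{prop:genrk} and Theorem \ref{thm:relrk} yields $\widetilde{d}_S = |S| + |P|$ and $\widetilde{r}_S \le |S| + |P|$. Hence any surjection from a free pro-$p$ group on $|S|+|P|$ generators whose kernel contains $|S|+|P|$ relations that are linearly independent modulo $F^p[F,F]$ is automatically a minimal presentation.

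For each $v \in S \cup P$, choose a prime $\widetilde{v}$ of $(k^{\mathrm{cyc}})_S$ above $v$ and a topological generator of its inertia subgroup in $\widetilde{G}_S(k)$; lift these choices to free generators $x_v$ of a free pro-$p$ group $F$. By the rank count, the resulting map $\pi \colon F \twoheadrightarrow \widetilde{G}_S(k)$ is surjective. Local class field theory, already used in the proof of Theorem \ref{thm:relrk}, supplies one relation $r_v$ per prime: for tame $v \in S$ the pro-$p$ Galois group $\overline{G}_v$ is the one-relator group on (inertia, Frobenius) $=(x_v,y_v)$ with defining relation $x_v^{|N_{k/\mathbb{Q}}v|-1}[x_v^{-1},y_v^{-1}]$, and for $v \in P$ the cyclotomically ramified quotient $G_v^{\mathrm{cr}} \simeq \mathbb{Z}_p^2$ contributes the commutator relation $[x_v^{-1},y_v^{-1}]$. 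These relations all lie in $\ker \pi$; since $|S|+|P|$ is also the upper bound for the relation rank, they normally generate $R$ and form a minimal system of relations.

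It remains to identify $\pi(y_v)$ modulo $[F,F]$. Using the reciprocity isomorphism $\mathrm{rec}_S$ of Proposition \ref{prop:idele}, identify $\widetilde{G}_S(k)^{\mathrm{ab}}$ with $\mathcal{J}_k/\widetilde{\mathcal{U}}_S\overline{k^{\times}}$; under this identification each $x_w$ corresponds to the image of $\iota_w(\alpha_w)$ in $\mathcal{U}_w/\widetilde{\mathcal{U}}_w$, while $y_v$ corresponds to a uniformizer at $v$. Since $\beta_v \in \overline{k^{\times}}$ generates $v^{h_k}$, the diagonal image of $\beta_v$ is trivial in $\widetilde{G}_S(k)^{\mathrm{ab}}$. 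Comparing the local components of $\beta_v$ against the chosen $\alpha_w$ and applying the defining formulas of $\mathrm{lk}(v,w)$ at each $w \neq v$ yields
\[
y_v^{h_k} \equiv \prod_{w \in S \cup P,\ w \neq v} x_w^{h_k\,\mathrm{lk}(v,w)} \pmod{[F,F]} .
\]
Because $h_k$ is a $p$-adic unit, taking an $h_k$-th root gives the stated congruence.

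The step I expect to be the main obstacle is the very last one: matching the idèlic image of $\beta_v$ against the combinatorial linking numbers place by place. The definitions at $w \in S$ and $w \in P$ have different flavors (a congruence modulo $w$ versus a congruence modulo $\widetilde{\mathcal{U}}_w$ in $\widehat{k_w^{\times}}$), so verifying that both cases unwind to exactly $\mathrm{lk}(v,w)$ demands careful bookkeeping of the decomposition $\mathcal{U}_w/\widetilde{\mathcal{U}}_w$, of the choice of $\alpha_w$, and of the sign conventions in the reciprocity map—sign issues being the only place where the hypothesis $p \neq 2$ meaningfully simplifies the computation.
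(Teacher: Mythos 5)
Your overall strategy is the same as the paper's, and the computation of $\widetilde{d}_S=|S|+|P|$, the choice of inertia generators, and the identification of $y_v$ modulo $[F,F]$ via $\mathrm{rec}_S$ and the triviality of the diagonal image of $\beta_v$ (with the $h_k$-th root extracted at the end) all match the intended argument. However, there is a genuine gap at the step where you conclude that the $|S|+|P|$ local relations normally generate $R$. Knowing that $\widetilde{r}_S \le |S|+|P|$ and exhibiting $|S|+|P|$ elements of $\ker\pi$ does not imply that those elements generate $R$: a relation module needing at most $n$ generators can perfectly well fail to be generated by $n$ particular elements of it (they could be redundant or even trivial). Moreover, your stated criterion --- linear independence of the relations modulo $F^p[F,F]$ --- is vacuous here, since in a minimal presentation every relation already lies in $F^p[F,F]$; the relevant space is $R/R^p[F,R]$, which is dual to $H^2(\widetilde{G}_S(k))$ via transgression, and you never verify that the images of your $r_v$ span it.

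The paper closes exactly this gap by a cohomological argument: under the hypotheses ($p\neq 2$, $h_k\not\equiv 0 \bmod p$, $k\neq\mathbb Q(\sqrt{-3})$ when $p=3$) Remark \ref{rem:trivB} gives $\widetilde{B}_S=(k^{\times})^p$, so Theorem \ref{thm:relrk} forces $\Sha^2(\widetilde{G}_S(k))=0$, i.e.\ the localization map $H^2(\widetilde{G}_S(k))\to\bigoplus_{v\in S\cup P}H^2(H_v)$ is injective (with $H_v=\overline{G}_v$ for $v\in S$ and $H_v=G_v^{\mathrm{cr}}$ for $v\in P$). By the duality between $H^2$ and the relation module (\cite[Theorem 6.14]{Koch}), this injectivity is equivalent to the statement that the images of the single defining relations of the local groups $H_v$ generate $R/R^p[F,R]$, whence they normally generate $R$. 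Without invoking $\Sha^2=0$ (or some equivalent local-global statement), your counting argument cannot deliver the conclusion. A smaller but related issue: surjectivity of $\pi$ does not follow "from the rank count" either; one first needs that the inertia elements $\tau_v$ generate $\widetilde{G}_S(k)$, which holds because the fixed field of the normal closure of the $\tau_v$ would be an everywhere-unramified $p$-extension of $k$, trivial since $p\nmid h_k$; only then does the rank count show the presentation is minimal.
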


\begin{remark}\label{rem:lkimag}
Suppose that $p$ splits in $k$ as $pO_k=\mathfrak{p}_1\mathfrak{p}_2$. 
Then $P=\{\mathfrak{p}_1, \mathfrak{p}_2\}$, $\widetilde{\mathcal{U}}_{\mathfrak{p}_i}=\{1\}$, and $\alpha_{\mathfrak{p}_i}=(1+p)^{-1}$. 
Since $\iota_{\mathfrak{p}_i}$ is injective from $U_{\mathfrak{p}_i}^{p-1}$, 
the linking number $\mathrm{lk}(v,\mathfrak{p}_i) \in \mathbb Z_p$ satisfies 
\[
\beta_v^{p-1}=(1+p)^{(p-1)\mathrm{lk}(v,\mathfrak{p}_i)} \in k_{\mathfrak{p}_i} . 
\]
Hence $\mathrm{lk}(v,\mathfrak{p}_i) \equiv 0 \pmod{p^n}$ for $1 \le n \in \mathbb Z$ 
if and only if 
$\beta_v^{p-1} \equiv 1 \pmod{\mathfrak{p}_i^{n+1}}$. 
Since an isomorphism $k_{\mathfrak{p}_1} \stackrel{\simeq}{\rightarrow} k_{\mathfrak{p}_2}$ is induced from the nontrivial element of $\mathrm{Gal}(k/\mathbb Q)$, 
we have 
$\mathrm{lk}(\mathfrak{p}_1,\mathfrak{p}_2)=\mathrm{lk}(\mathfrak{p}_2,\mathfrak{p}_1)$. 
\end{remark}

\subsection{Proof of Theorems \ref{thm:presQ} and \ref{thm:presk}.} 
Put $S'=S \setminus \{\infty\}$ if $\infty \in S$, and $S'=S$ otherwise. 
Put $S^{\ast}=S \setminus \{q\}$ if $p=2$ and $\infty \not\in S$, 
and put $S^{\ast}=S'$ otherwise. 
If $k=\mathbb Q$, 
we put $\alpha_p=(1+p)^{-1}$ or $\alpha_p=5^{-1}$ according to $p \neq 2$ or $p=2$, 
and put $\beta_{\ell}=\ell$ for each prime number $\ell \in S' \cup \{p\}$. 

Let $\iota_v : k_v^{\times} \rightarrow \widehat{k_v^{\times}}$ be the natural homomorphism for each prime $v$. 
Note that $U_v/(U_v \cap \mathrm{Ker}\iota_v)$ is the maximal pro-$p$ quotient of $U_v$ if $v \nmid \infty$. 
If $v \in S'$, the finite cyclic $p$-group $\mathcal{U}_v$ is generated by $\iota_v(\alpha_v)$, and 
\[
\iota_v(\beta_w)^{-1} = \iota_v(\alpha_v)^{\mathrm{lk}(w,v)} 
\]
for $w \in (S' \setminus \{v\}) \cup P$. 
If $v \in P$, we have 
$\mathcal{U}_v/\widetilde{\mathcal{U}}_v = \langle \iota_v(\alpha_v)\widetilde{\mathcal{U}}_v \rangle \simeq \mathbb Z_p$ 
and 
\[
\iota_v(\beta_w)^{-1} \equiv \iota_v(\alpha_v)^{\mathrm{lk}(w,v)} \mod{\widetilde{\mathcal{U}}_v}
\]
for $w \in S' \cup (P \setminus \{v\})$ 
(cf.\ Example \ref{exam:kvQp}). 
Then we obtain the following congruences in $\mathcal{J}_k \subset \prod_v \widehat{k_v^{\times}}$; 
\begin{align}
\iota_w(\beta_w)
=(\iota_w(\beta_w),(1)_{v \not\in \{w\}}) 
&\equiv (1,(\iota_v(\beta_w)^{-1})_{v \not\in \{w\}}) 
\nonumber \\
&\equiv ((1)_{v \not\in (S' \cup P)\setminus \{w\}},(\iota_v(\beta_w)^{-1})_{v \in (S' \cup P)\setminus \{w\}}) 
\nonumber \\
&\equiv \underset{v \in S' \cup P}{\textstyle{\prod}} \iota_v(\alpha_v)^{\mathrm{lk}(w,v)} \mod{\widetilde{\mathcal{U}}_S\overline{k^{\times}}}, 
\label{eqv:b_a}
\end{align}
where we note that $\beta_w \otimes 1 \in \overline{k^{\times}}$, 
and that 
$\beta_{\ell}=\ell \in \mathbb R_{>0}$, i.e., $\iota_v(\beta_{\ell})=1 \in \widehat{k_v^{\times}}$ if $v=\infty \in S$. 
If $p=2$ and $\infty \not\in S$, 
then $k=\mathbb Q$, and 
\begin{align}\label{eqv:q_a}
1 \equiv (\iota_v(-1))_v \equiv ((1)_{v \not\in S},(\iota_v(\alpha_v)^{\frac{v -1}{2}})_{v \in S})
\equiv \iota_q(\alpha_q)^{-1} \underset{v \in S^{\ast}}{\textstyle{\prod}} \iota_v(\alpha_v)^{\frac{v -1}{2}} \mod{\widetilde{\mathcal{U}}_S\overline{k^{\times}}}, 
\end{align}
where we note that $-1 \in \widetilde{\mathcal{U}}_p$ (cf.\ Example \ref{exam:kvQp}), 
and that $\frac{q-1}{2} \equiv -1 \pmod{2}$, i.e., $\mathcal{U}_q \simeq \mathbb Z/2\mathbb Z$. 
Note that $\beta_v \varpi_v^{-h_k} \in U_v$ for arbitrary uniformizer $\varpi_v$ of $k_v$, 
where $h_k=1$ if $k=\mathbb Q$. 
Since $h_k \in \mathbb Z_p^{\times}$, 
we have $\widehat{k_v^{\times}}/\mathcal{U}_v = \langle \iota_v(\beta_v) \mathcal{U}_v \rangle \simeq \mathbb Z_p$ 
for $v \in S' \cup P$. 

Recall that $\widetilde{G}_{S,v}$ is identified with the decomposition subgroup of $\widetilde{G}_S=\widetilde{G}_S(k)$ for each fixed prime $\widetilde{v}|v$ of $(k^{\mathrm{cyc}})_S$. 
Let $\widetilde{I}_v \subset \widetilde{G}_{S,v}$ be the inertia subgroup. 
If $v \not\in P$ and $v \nmid \infty$, 
then the inertia subgroup of $\overline{G}_v \simeq \mathbb Z_p \rtimes \mathbb Z_p$ is isomorphic to $\mathbb Z_p$ 
(cf.\ e.g.\ \cite[Theorem 10.2]{Koch}), 
and hence $\widetilde{I}_v$ is also procyclic. 
If $v \in P$, then $(k^{\mathrm{cyc}})_Sk_v \subset k_v^{\mathrm{ur},p} k_v^{\mathrm{cyc}}$, and hence 
\[
\widetilde{I}_v = \mathrm{Gal}((k^{\mathrm{cyc}})_Sk_v/(k^{\mathrm{cyc}})_Sk_v \cap k_v^{\mathrm{ur},p}) 
\simeq \mathrm{Gal}(k_v^{\mathrm{ur},p} k_v^{\mathrm{cyc}}/k_v^{\mathrm{ur},p}) 
\simeq \mathbb Z_p . 
\]
Proposition \ref{prop:idele} implies the existence of 
a Frobenius element $\sigma_v \in \widetilde{G}_{S,v}$ 
and a generator $\tau_v$ of procyclic $\widetilde{I}_v$ for $v \in S' \cup P$ 
such that 
\begin{align}\label{eq:ba_st}
\mathrm{rec}_S(\iota_v(\beta_v)\widetilde{\mathcal{U}}_S\overline{k^{\times}}) 
=\sigma_v^{h_k} [\widetilde{G}_S,\widetilde{G}_S] , 
\quad 
\mathrm{rec}_S(\iota_v(\alpha_v)\widetilde{\mathcal{U}}_S\overline{k^{\times}}) 
=\tau_v^{h_k} [\widetilde{G}_S,\widetilde{G}_S] . 
\end{align}
Since $k$ has no $p$-extensions unramified outside $S \setminus S^{\ast}$, 
the pro-$p$ group $\widetilde{G}_S$ is generated by $\{\tau_v\}_{v \in S^{\ast} \cup P}$. 
Since $\widetilde{d}_S=|S^{\ast} \cup P|$ by Remark \ref{rem:trivB} and Proposition \ref{prop:genrk}, 
$\widetilde{G}_S$ has a minimal presentation 
\[
\entrymodifiers={+!!<0pt,\fontdimen22\textfont2>}
\xymatrix{
1 \ar[r] & R \ar[r] & F \ar[r]^-{\pi} & \widetilde{G}_S \ar[r] & 1 
}
\]
with a free pro-$p$ group $F$ generated by $\{x_v\}_{v \in S^{\ast} \cup P}$ 
such that $\pi(x_v)=\tau_v$. 
By (\ref{eqv:b_a}), (\ref{eqv:q_a}) and (\ref{eq:ba_st}), 
there is an element $y_w \in F$ for each $w \in S^{\ast} \cup P$ such that 
$\pi(y_w)=\sigma_w$ and 
\[
y_w \equiv 
\left\{
\begin{array}{lll}
\underset{v \in S' \cup P}{\textstyle{\prod}} x_v^{\mathrm{lk}(w,v)} & \mod{[F,F]} & \hbox{if $p \neq 2$ or $\infty \in S$}, \\
\Big(\, \underset{v \in S^{\ast}}{\textstyle{\prod}} x_v^{\frac{v -1}{2}} \Big)^{\mathrm{lk}(w,q)} 
\underset{v \in S^{\ast} \cup P}{\textstyle{\prod}} x_v^{\mathrm{lk}(w,v)} 
& \mod{[F,F]} & \hbox{otherwise}. 
\end{array}
\right.
\]
For $v \in S^{\ast} \cup P$, we put 
\begin{align}\label{peripheralH}
H_v=
\left\{
\begin{array}{ll}
\overline{G}_v \simeq \mathbb Z_p \rtimes \mathbb Z_p & \hbox{if $v \in S^{\ast}$}, \\
G_v^{\mathrm{cr}} \simeq \mathbb Z_p \times \mathbb Z_p & \hbox{if $v \in P$}, 
\end{array}
\right.
\end{align}
as in the proof of Theorem \ref{thm:relrk}, 
and let 
$F_v$ be a free pro-$p$ group generated by $\{s_v,t_v\}$. 
Then there exists a commutative diagram 
\[
\entrymodifiers={+!!<0pt,\fontdimen22\textfont2>}
\xymatrix{
1 \ar[r] & R \ar[r] & F \ar[r]^-{\pi} & \widetilde{G}_S \ar[r] & 1 \\
1 \ar[r] & R_v \ar[r] \ar[u] & F_v \ar[r]^-{\pi_v} \ar[u]^-{\chi_v} & H_v \ar[r] \ar[u]^-{\varphi_v} & 1 
}
\]
with minimal presentations of $H_v$, 
such that 
$\chi_v(t_v)=x_v$, $\chi_v(s_v)=y_v$, $\varphi_v(H_v)=\widetilde{G}_{S,v}$, 
and $\pi_v(s_v)$ (resp.\ $\pi_v(t_v)$) is a Frobenius element (resp.\ a generator of the inertia subgroup) of $H_v$, 
where $R_v$ is a normal subgroup of $F_v$ normally generated by 
either 
$t_v^{|N_{k/\mathbb Q}v|-1} [t_v^{-1},s_v^{-1}]$ or $[t_v^{-1},s_v^{-1}]$ 
according to $v \in S^{\ast}$ or $v \in P$ (cf.\ \cite[Theorem 10.2]{Koch}). 
Since $\Sha^2(\widetilde{G}_S)=\{0\}$ by Remark \ref{rem:trivB} and Theorem \ref{thm:relrk}, 
the localization map 
\[
H^2(\widetilde{G}_S) \stackrel{\mathrm{loc}}{\longrightarrow} \underset{v \in S^{\ast} \cup P}{\textstyle{\bigoplus}} H^2(H_v) 
\]
(which is induced from $\{\varphi_v\}_{v \in S^{\ast} \cup P}$) is injective, 
and hence 
$R$ is normally generated by 
\[
\{\, x_v^{|N_{k/\mathbb Q}v|-1} [x_v^{-1},y_v^{-1}] \,\}_{v \in S^{\ast}} 
\quad \hbox{and} \quad 
\{\, [x_v^{-1},y_v^{-1}] \,\}_{v \in P} 
\]
as a normal subgroup of $F$ (cf.\ \cite[Theorem 6.14]{Koch}). 
Thus the proof of Theorems \ref{thm:presQ} and \ref{thm:presk} is completed.

\begin{remark}
Suppose that $p=2$, $\infty \in S$ and $\ell_d \equiv 3 \pmod{4}$ in the case of Theorem \ref{thm:presQ}. 
If we put $\alpha_{\infty}=-1$ and $H_{\infty}=\overline{G}_{\infty} \simeq \mathbb Z/2\mathbb Z$, 
then 
\[
1 \equiv (\iota_v(-1))_v 
\equiv \iota_{\ell_d}(\alpha_{\ell_d})^{-1} 
\iota_{\infty}(\alpha_{\infty})
\underset{v \in S \setminus \{\infty,\ell_d\}}{\textstyle{\prod}} \iota_v(\alpha_v)^{\frac{v -1}{2}} \mod{\widetilde{\mathcal{U}}_S\overline{k^{\times}}} , 
\]
and hence the same arguments using $S \setminus \{\ell_d\}$ instead of $S^{\ast}$ 
yield the existence of $x_{\infty}$ and $\{\eta_i\}_{0 \le i <d}$ such that 
$1 \neq \pi(x_{\infty}) \in \widetilde{I}_{\infty}=\widetilde{G}_{S,\infty} \simeq \mathbb Z/2\mathbb Z$, 
\begin{align*}
x_d &\equiv x_{\infty}
\underset{j=1}{\stackrel{d-1}{\textstyle{\prod}}} x_j^{\frac{\ell_j -1}{2}} \mod{[F,F]} , \\
\eta_i &\equiv x_0^{\mathrm{lk}(\ell_i,\ell_0)} x_{\infty}^{\mathrm{lk}(\ell_i,\ell_d)} 
\underset{j=1}{\stackrel{d-1}{\textstyle{\prod}}} x_j^{\mathrm{lk}(\ell_i,\ell_j)+\frac{\ell_j -1}{2}\mathrm{lk}(\ell_i,\ell_d)} \mod{[F,F]} , 
\end{align*}
and $R$ is normally generated by 
$\{\, x_i^{\ell_i -1} [x_i^{-1},\eta_i^{-1}] \,\}_{1 \le i <d}$, 
$[x_0^{-1},\eta_0^{-1}]$ and $x_{\infty}^2$. 
\end{remark}

\begin{remark}\label{rem:gamma}
Let $\overline{\gamma}$ be the element of $\mathrm{Gal}(k(\mu_{p^{\infty}})/k)$ such that $\overline{\gamma}(\zeta)=\zeta^{\kappa}$ for any $\zeta \in \mu_{p^{\infty}}$, 
where $\kappa=1+p$ or $\kappa=5$ according to $p \neq 2$ or $p=2$, 
and $\mu_{p^{\infty}}$ denotes the group of $p$-power roots of unity. 
If $k_v=\mathbb Q_p$ for $v \in P$, 
there is a commutative diagram 
\[
\entrymodifiers={+!!<0pt,\fontdimen22\textfont2>}
\xymatrix{
k_v^{\times} \ar[r]^-{\iota_v \bmod{\widetilde{\mathcal{U}}_S \overline{k^{\times}}}} \ar[d]^-{\rho} & \mathcal{J}_k/\widetilde{\mathcal{U}}_S \overline{k^{\times}} \ar[r]^-{\mathrm{rec}_S} & \widetilde{G}_S^{\mathrm{ab}} \ar[r]^-{|_{k^{\mathrm{cyc}}}} & \mathrm{Gal}(k^{\mathrm{cyc}}/k) \\
\mathrm{Gal}(\mathbb Q_p(\mu_{p^{\infty}})/\mathbb Q_p) \ar[rrr]^-{\simeq} & & & \mathrm{Gal}(k(\mu_{p^{\infty}})/k) \ar[u]^-{|_{k^{\mathrm{cyc}}}} \\
} 
\]
with the reciprocity map $\rho$ of local class field theory, which satisfies 
$\rho(\alpha_v)(\zeta)=\zeta^{\alpha_v^{-1}}=\zeta^{\kappa}$ for any $\zeta \in \mu_{p^{\infty}}$ (cf.\ e.g.\ \cite[II, Exercise 3.4.3]{Gra}). 
Then we have $\pi(x_v)^{h_k}|_{k^{\mathrm{cyc}}}=\overline{\gamma}|_{k^{\mathrm{cyc}}}$. 
In particular, 
$\pi(x_0)|_{\mathbb Q^{\mathrm{cyc}}}=\overline{\gamma}|_{\mathbb Q^{\mathrm{cyc}}}$ 
for $x_0 \in F$ of Theorem \ref{thm:presQ}. 
\end{remark}

\begin{remark}\label{rem:anlgG}
A link group $\pi_1(\mathsf{X})$ is the fundamental group of the complement $\mathsf{X}$ of the open tubular neighborhood $\bigsqcup_{i=1}^{d} V_{\mathsf{K}_i}^{\circ}$ of a link $\mathsf{L}=\bigsqcup_{i=1}^{d} \mathsf{K}_i$ in a rational homology $3$-sphere $\mathsf{M}$. 
The image of $\pi_1(\partial V_{\mathsf{K}_i}) \simeq \mathbb Z \times \mathbb Z$ in $\pi_1(\mathsf{X})$ is analogous to 
the decomposition group of a ramified prime $v \in S^* \cup P$ in $\widetilde{G}_S(k)$, 
which is a quotient of the local Galois group $H_v \simeq \mathbb Z_p \rtimes \mathbb Z_p$ (cf.\,(\ref{peripheralH})). 
Hence the Koch type presentation of $\widetilde{G}_S(k)$ is 
analogous to a Milnor presentation of $\pi_1(\mathsf{X})$ (cf.\ e.g.\ \cite{Mor}). 
\end{remark}

\subsection{Preliminaries for consequences.} 
Let $G$ be a pro-$p$ group. 
Put $G_1=G$, 
and put $G_n=[G_{n-1},G]$ for $2 \le n \in \mathbb Z$ recursively. 
Then $\{G_n\}_{n \ge 1}$ is the lower central series of $G$. 
Put $G_{(n)}=\{g \in G\,|\,g-1 \in (I_G)^n\}$ for $1 \le n \in \mathbb Z$, 
where $I_G=\mathrm{Ker}(\mathbb F_p[[G]] \rightarrow \mathbb F_p)$ 
is the augmentation ideal of $\mathbb F_p[[G]]$. 
Then $G_{(1)}=G$ and 
$\{G_{(n)}\}_{n \ge 1}$ is the Zassenhaus filtration of $G$. 
Put $[g_1,g_2,g_3]=[[g_1,g_2],g_3] \in G_3$ for $g_1,g_2,g_3 \in G$. 

Let $F=\langle x_0,x_1,\cdots,x_d \rangle$ be a free pro-$p$ group 
generated by $\{x_i\}_{0 \le i \le d}$. 
Let $\varepsilon_{\mathbb Z_p[[F]]} : \mathbb Z_p[[F]] \rightarrow \mathbb Z_p$ be the augmentation map, and $\frac{\partial}{\partial x_i} : \mathbb Z_p[[F]] \rightarrow \mathbb Z_p[[F]]$ be the pro-$p$ Fox derivative. 
For a multi-index $I=(i_1 \cdots i_n)$ with $0 \le i_1,\cdots,i_n \le d$ and $y \in F$, we put 
\[
\varepsilon_{I}(y)=
\varepsilon_{\mathbb Z_p[[F]]}
\Big( \frac{\partial^n y}{\partial x_{i_1} \cdots \partial x_{i_n}} \Big) \in \mathbb Z_p 
\]
which is the coefficient of $X_{i_1} \cdots X_{i_n}$ 
in the expansion of $y$ by the pro-$p$ Magnus isomorphism 
\[
\hat{M} : \mathbb Z_p[[F]] \simeq \mathbb Z_p \langle\!\langle X_1,\cdots,X_d \rangle\!\rangle : x_j \mapsto 1+X_j 
\]
(cf.\ \cite[Proposition 8.14]{Mor}). 
We also put 
\[
\varepsilon_{I,p}(y) = \varepsilon_{I}(y)+p\mathbb Z_p \in \mathbb F_p 
\]
the mod $p$ Magnus coefficient. 

Suppose that $F$ is the free pro-$p$ group in Theorem \ref{thm:presQ}, 
and let $y_i \in F$ be the element obtained in Theorem \ref{thm:presQ}. 
Then $\pi(y_i)$ is an analogue of the longitude of a component of a link, 
and the mod $p$ Milnor number 
\[
\mu_p(i_1 \cdots i_n i)=\varepsilon_{I,p}(y_i)
\]
for a multi-index $(i_1 \cdots i_n i)$ is defined.

\subsection{R\'edei symbols.} 
We consider the case where $p=2$ and $S=\{\ell_1,\cdots,\ell_d,\infty\}$. 
Put $\ell_0^{\ast}=\ell_0=2$, and put $\ell_i^{\ast}=(-1)^{\frac{\ell_i -1}{2}} \ell_i \equiv 1 \pmod{4}$ for $1 \le i \le d$. 
For $0 \le i \le d$, 
the elements $y_i$ in Theorem \ref{thm:presQ} can be written in the form 
\begin{align}\label{def:ciab}
y_i \equiv x_d^{\mathrm{lk}(\ell_i,\ell_d)} \cdots x_1^{\mathrm{lk}(\ell_i,\ell_1)} x_0^{\mathrm{lk}(\ell_i,\ell_0)}
\underset{a < b}{\textstyle{\prod}} [x_a,x_b]^{c_{iab}} \mod{F_3}
\end{align}
with some $c_{iab}=-c_{iba} \in \mathbb Z_2$, 
where all pairs $(a,b)$ with $0 \le a < b \le d$ run in the product. 
Assume that there are distinct prime numbers $\ell_a$, $\ell_b \in S \cup \{2\}$ satisfying 
\begin{align}\label{eqv:abba}
\mathrm{lk}(\ell_a,\ell_b) \equiv \mathrm{lk}(\ell_b,\ell_a) \equiv 0 \pmod{2} . 
\end{align}
For such $\ell_a$ and $\ell_b$, the quadratic field $\mathbb Q(\sqrt{\ell_a^{\ast}\ell_b^{\ast}})$ has a unique cyclic extension $K_{(a,b)}$ of degree $4$ unramified outside $\infty$ (cf.\ e.g.\ \cite{Yam84}). 
Then $\mathbb Q(\sqrt{\ell_a^{\ast}}, \sqrt{\ell_b^{\ast}}) \subset K_{(a,b)}$, 
and $K_{(a,b)}/\mathbb Q$ is the R\'edei extension, 
which is a dihedral extension of degree $8$ unramified outside $\{\ell_a,\ell_b,\infty\}$. 
Moreover if a prime number $\ell_i$ ($0 \le i \le d$) satisfies 
\begin{align}\label{eqv:iaib}
\mathrm{lk}(\ell_i,\ell_a) \equiv \mathrm{lk}(\ell_i,\ell_b) \equiv 0 \pmod{2} , 
\end{align}
then a prime ideal $\mathfrak{l}_i$ of $\mathbb Q(\sqrt{\ell_a^{\ast}\ell_b^{\ast}})$ lying over $\ell_i$ splits in $\mathbb Q(\sqrt{\ell_a^{\ast}}, \sqrt{\ell_b^{\ast}})$, 
and the R\'edei symbol (cf.\ \cite{Red39}) for such triple $(\ell_a,\ell_b,\ell_i)$ can be defined as 
\[
{[\ell_a,\ell_b,\ell_i]}=
{[\ell_b,\ell_a,\ell_i]}=
\left\{
\begin{array}{rl}
1 & \hbox{if $\mathfrak{l}_i$ splits completely in $K_{(a,b)}$,}\\
-1 & \hbox{otherwise.}
\end{array}
\right.
\]
The following proposition relates $c_{iab}$, $[\ell_a,\ell_b,\ell_i]$, and the mod $2$ Milnor number $\mu_2(abi)$. 

\begin{proposition}\label{Redei:ciab}
Under the settings above, we have 
\[
{[\ell_a,\ell_b,\ell_i]}=(-1)^{c_{iab}}=(-1)^{\mu_2(abi)}
\]
if (\ref{eqv:abba}) and (\ref{eqv:iaib}) are satisfied. 
\end{proposition}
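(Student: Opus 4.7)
My plan is to establish the two equalities separately. The first, $(-1)^{c_{iab}}=(-1)^{\mu_2(abi)}$, is a direct Magnus-expansion computation from (\ref{def:ciab}); the second, $[\ell_a,\ell_b,\ell_i]=(-1)^{c_{iab}}$, will follow by realising $\mathrm{Gal}(K_{(a,b)}/\mathbb{Q})\simeq D_4$ as a quotient of $\widetilde{G}_S(\mathbb{Q})$ and reading off the image of the Frobenius $\sigma_{\ell_i}=\pi(y_i)$.

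For the Magnus step I will apply $\hat{M}$ to the representative (\ref{def:ciab}) and isolate the coefficient $\varepsilon_{ab}(y_i)$. The linear product $x_d^{\mathrm{lk}(\ell_i,\ell_d)}\cdots x_0^{\mathrm{lk}(\ell_i,\ell_0)}$, arranged in decreasing index, contributes only monomials $X_{j'}X_{j}$ with $j'>j$ at degree two, so no $X_aX_b$ arises from it (since $a<b$). Each commutator $[x_{a'},x_{b'}]^{c_{ia'b'}}$ expands as $1+c_{ia'b'}(X_{a'}X_{b'}-X_{b'}X_{a'})$ modulo degree $3$, and only the pair $(a',b')=(a,b)$ contributes to the $X_aX_b$-coefficient. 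Hence $\varepsilon_{ab,2}(y_i)\equiv c_{iab}\pmod 2$, which yields $(-1)^{\mu_2(abi)}=(-1)^{c_{iab}}$.

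For the R\'edei step I will first verify that $K_{(a,b)}\subset(\mathbb{Q}^{\mathrm{cyc}})_S$: by construction $K_{(a,b)}/\mathbb{Q}(\sqrt{\ell_a^{\ast}\ell_b^{\ast}})$ is unramified outside $\infty$, and any wild ramification at $2$ (in the case $2\in\{\ell_a,\ell_b\}$) is absorbed by $\mathbb{Q}(\sqrt{2})\subset\mathbb{Q}^{\mathrm{cyc}}$, so $K_{(a,b)}\mathbb{Q}^{\mathrm{cyc}}/\mathbb{Q}^{\mathrm{cyc}}$ is tame at $2$. This produces a surjection $\psi:\widetilde{G}_S(\mathbb{Q})\twoheadrightarrow\mathrm{Gal}(K_{(a,b)}/\mathbb{Q})\simeq D_4$. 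For indices $j\notin\{a,b\}$ the prime $\ell_j$ is unramified in $K_{(a,b)}/\mathbb{Q}$, so $\psi\pi(x_j)=1$; the elements $r_a:=\psi\pi(x_a)$ and $r_b:=\psi\pi(x_b)$ are then the two generating reflections of $D_4$, with central commutator $z=[r_a,r_b]$ of order $2$. Applying $\psi\pi$ to (\ref{def:ciab}), hypothesis (\ref{eqv:iaib}) ensures $r_a^{\mathrm{lk}(\ell_i,\ell_a)}=r_b^{\mathrm{lk}(\ell_i,\ell_b)}=1$, so the linear part collapses; among the commutator factors only $[r_a,r_b]^{c_{iab}}=z^{c_{iab}}$ survives, giving $\psi(\sigma_{\ell_i})=z^{c_{iab}}$. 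Since (\ref{eqv:iaib}) also forces $\sigma_{\ell_i}$ to fix $\mathbb{Q}(\sqrt{\ell_a^{\ast}},\sqrt{\ell_b^{\ast}})$, the condition that $\mathfrak{l}_i$ split completely in $K_{(a,b)}$ becomes $\psi(\sigma_{\ell_i})=1$, i.e.\ $c_{iab}$ even, and hence $[\ell_a,\ell_b,\ell_i]=(-1)^{c_{iab}}$.

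The main obstacle will be the arithmetic set-up rather than the group-theoretic computation: one must verify via Proposition \ref{prop:idele} and the class field theoretic description of Section \ref{sec:pres} that $K_{(a,b)}\subset(\mathbb{Q}^{\mathrm{cyc}})_S$ in the delicate case $2\in\{\ell_a,\ell_b\}$, and that $\psi\pi(x_a),\psi\pi(x_b)$ are indeed the intended inertia reflections (with some extra bookkeeping when $\ell_i=2$). Once this is in place, the Magnus coefficient extraction and the $D_4$-level product manipulation are essentially formal.
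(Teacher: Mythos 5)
Your overall strategy is the same as the paper's: push the expression (\ref{def:ciab}) for $y_i$ into the quotient $\mathrm{Gal}(K_{(a,b)}/\mathbb Q)\simeq D_4$ to identify $\psi\pi(y_i)=z^{c_{iab}}$ with $z$ the central involution, and read off $\mu_2(abi)=c_{iab}\bmod 2$ from the degree-two Magnus coefficient. The Magnus step is fine (the paper packages the same computation by showing $\varepsilon_{I,2}(F_3N_{(a,b)})=0$ for an auxiliary normal subgroup $N_{(a,b)}$, but your direct expansion of (\ref{def:ciab}) reaches the same conclusion). The paper obtains the quotient slightly differently: it first proves $R\subset F_3N_{(a,b)}$ using (\ref{eqv:abba}) and then identifies $F/F_3N_{(a,b)}$ with $\mathrm{Gal}(K_{(a,b)}/\mathbb Q)$ by an order count; this also settles the containment $K_{(a,b)}\subset(\mathbb Q^{\mathrm{cyc}})_S$ that you only sketch when $2\in\{\ell_a,\ell_b\}$. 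Your local argument there is salvageable -- under (\ref{eqv:abba}) one has $\ell_b^*\equiv 1\pmod 8$, so the ramified quadratic piece of $K_{(a,b)}$ at $2$ is $\mathbb Q_2(\sqrt 2)$, which is cyclotomic -- but it does need to be written out.

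The genuine gap is the case $i\in\{a,b\}$. It lies within the scope of the proposition (for $i=a$, condition (\ref{eqv:iaib}) reduces to (\ref{eqv:abba}) since $\mathrm{lk}(\ell_a,\ell_a)=0$) and is precisely the case needed for Proposition \ref{Redei:4th}, Corollary \ref{cor:free4} and Theorem \ref{thm:d=2:imag}. For such $i$ the prime $\ell_i$ ramifies in $K_{(a,b)}$, so $\psi\pi(y_i)$ is only one generator of the decomposition group, the other being the inertia element $\psi\pi(x_i)$ of order $2$, and the assertion ``$\mathfrak l_i$ splits completely iff $\psi(\sigma_{\ell_i})=1$'' is no longer a formal consequence of $\sigma_{\ell_i}$ fixing $\mathbb Q(\sqrt{\ell_a^*},\sqrt{\ell_b^*})$: one must check that the decomposition group $\langle\psi\pi(x_i),z^{c_{iab}}\rangle$ meets $\mathrm{Gal}(K_{(a,b)}/\mathbb Q(\sqrt{\ell_a^*\ell_b^*}))$ trivially exactly when $c_{iab}$ is even, equivalently relate $[\ell_a,\ell_b,\ell_i]=-1$ to $\mathbb Q(\sqrt{\ell_j^*})$ (for $j\in\{a,b\}\setminus\{i\}$) being the decomposition field. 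This is the half of the paper's proof beginning ``Suppose that $i\in\{a,b\}$'' that your write-up omits; the conclusion is still $(-1)^{c_{iab}}$, but your argument as stated does not cover it.
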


\begin{proof}
Let 
\[
\entrymodifiers={+!!<0pt,\fontdimen22\textfont2>}
\xymatrix{
1 \ar[r] & R \ar[r] & F \ar[r]^-{\pi} & \widetilde{G}_S(\mathbb Q) \ar[r] & 1
}
\]
be the presentation of $\widetilde{G}_S(\mathbb Q)$ obtained in Theorem \ref{thm:presQ}. 
Let $N_{(a,b)}$ be the minimal normal subgroup of $F$ 
including $\{x_j|j \not\in \{a,b\} \} \cup \{x_a^2,x_b^2\}$. 
By Theorem \ref{thm:presQ} and (\ref{eqv:abba}), we have $y_a$, $y_b \in F_2N_{(a,b)}$, and hence $r_a$, $r_b \in F_3N_{(a,b)}$. 
Since $r_j \in N_{(a,b)}$ for any $j \not\in \{a,b\}$, 
we have $R \subset F_3N_{(a,b)}$. 
Since 
\[
(x_ax_b)^2 \equiv [x_a,x_b] \mod{N_{(a,b)}}, 
\]
$F/F_2N_{(a,b)}$ is an elementary abelian $2$-group of order at most $4$. 
Since 
\[
{[x_a,x_b]}^2 \equiv [x_a^2,x_b] \equiv 1 \mod{F_3N_{(a,b)}}, 
\]
$F_2N_{(a,b)}/F_3N_{(a,b)}$ is a cyclic group generated by $[x_a,x_b]F_3N_{(a,b)}$ of order at most $2$. 
Recall that the ramification indices of any ramified primes are $2$ 
in the dehedral extension $K_{(a,b)}/\mathbb Q$ of order $8$ 
which is unramified outside $\{\ell_a,\ell_b,\infty\}$. 
Hence $\pi$ induces an isomorphism  
\[
F/F_3N_{(a,b)} \simeq \mathrm{Gal}(K_{(a,b)}/\mathbb Q) : xF_3N_{(a,b)} \mapsto \pi(x)|_{K_{(a,b)}} .
\]
By (\ref{def:ciab}) and (\ref{eqv:iaib}), we have 
\begin{align*}
y_i \equiv [x_a,x_b]^{c_{iab}} \mod{F_3N_{(a,b)}} . 
\end{align*}

If $i \not\in \{a,b\}$, 
$\ell_i$ splits completely in $\mathbb Q(\sqrt{\ell_a^{\ast}}, \sqrt{\ell_b^{\ast}})$ by (\ref{eqv:iaib}). 
Then $\pi(y_i)|_{K_{(a,b)}}$ generates the decomposition group of any primes lying over $\ell_i$ by Theorem \ref{thm:presQ}, 
and hence we have ${[\ell_a,\ell_b,\ell_i]}=(-1)^{c_{iab}}$. 
Suppose that $i \in \{a,b\}$. 
Then $\ell_i$ splits in $\mathbb Q(\sqrt{\ell_j^{\ast}})$ for $i \neq j \in \{a,b\}$ by $(\ref{eqv:abba})$. 
By the definition of R\'edei symbol, 
we have $[\ell_a,\ell_b,\ell_i]=-1$ if and only if 
$\mathbb Q(\sqrt{\ell_j^{\ast}})$ is equal to the decomposition field of 
any primes of $K_{(a,b)}$ lying over $\ell_i$. 
By Theorem \ref{thm:presQ}, 
the elements $\pi(x_i)|_{K_{(a,b)}}$, $\pi(y_i)|_{K_{(a,b)}}$ of order at most $2$ generate the decomposition group of the prime of $K_{(a,b)}$ which is the restriction of $\widetilde{\ell}_i$. 
Hence we have ${[\ell_a,\ell_b,\ell_i]}=(-1)^{c_{iab}}$. 

Put $h_i=[x_a,x_b]^{-c_{iab}} y_i \in F_3N_{(a,b)}$. 
By the basic properties of mod $2$ Magnus coefficients (cf.\ e.g.\ \cite[Proposition 2.18]{Vog05}), 
we have 
\begin{align*}
&\varepsilon_{(ab),2}([x_a,x_b]) =1 , \\
&\varepsilon_{(ab),2}(y_i) =
\varepsilon_{(ab),2}([x_a,x_b]^{c_{iab}})
+\varepsilon_{(a),2}([x_a,x_b]^{c_{iab}}) \varepsilon_{(b),2}(h_i)
+\varepsilon_{(ab),2}(h_i) ,
\end{align*}
and 
$\varepsilon_{I,2}(g^{-1}hh'g)=0$ 
for any $g \in F$ and $I \in \{(a),(b),(ab)\}$ 
if $\varepsilon_{I,2}(h)=\varepsilon_{I,2}(h')=0$ for all $I \in \{(a),(b),(ab)\}$. 
Since 
$\varepsilon_{I}(F_3)=0$ (cf.\ e.g.\ \cite[Proposition 8.15]{Mor}) 
and 
$\varepsilon_{I,2}(x_j^{\pm 1})=\varepsilon_{I,2}(x_a^{\pm 2})=\varepsilon_{I,2}(x_b^{\pm 2})=0$ 
for any $I \in \{(a),(b),(ab)\}$ and $j \not\in\{a,b\}$, 
the continuity of $\varepsilon_{I,2} : F \rightarrow \mathbb F_2$ yields that 
$\varepsilon_{I,2}(F_3N_{(a,b)})=0$. 
In particular, $\varepsilon_{(b),2}(h_i)=\varepsilon_{(ab),2}(h_i)=0$. 
Therefore 
\[
\mu_2(abi) = 
\varepsilon_{(ab),2}(y_i) =
c_{iab}+2\mathbb Z_2 . 
\]
Thus the proof of Proposition \ref{Redei:ciab} is completed. 
\end{proof}

The following proposition yields that 
the R\'edei symbols $[\ell_a,\ell_b,\ell_i]$ with $i \in \{a,b\}$ 
are written by the quartic residue symbols $\left(\frac{\phantom{z}}{\phantom{\ell}}\right)_4$ 
defined as follows; 
$\left(\frac{z}{\ell}\right)_4 = \pm 1 \equiv z^{\frac{\ell -1}{4}} \pmod{\ell}$ for a prime number $\ell \equiv 1 \pmod{4}$ and $z \in \mathbb Z_{\ell}^{\times}$ such that $\left(\frac{z}{\ell}\right)=1$, 
and $\left(\frac{z}{2}\right)_4 = (-1)^{\frac{z-1}{8}}$ for $z \in \mathbb Z$ such that $z \equiv 1 \pmod{8}$. 

\begin{proposition}[{cf.\ \cite{Red39}}]\label{Redei:4th}
Under the settings above, we have the following equations 
for distinct prime numbers $\ell_a$, $\ell_b \in S \cup \{2\}$ satisfying (\ref{eqv:abba}); 
\begin{align*}
&{[\ell_a,\ell_b,\ell_a]}
={\textstyle \big(\frac{\ell_b}{\ell_a}\big)_4}
\hbox{ if $\ell_a \not\equiv 3 \pmod{4}$ and $\ell_b \not\equiv 3 \pmod{4}$,} \\
&{[\ell_a,\ell_b,\ell_a]}={[\ell_a,\ell_b,\ell_b]}
={\textstyle \big(\frac{-\ell_b}{\ell_a}\big)_4}
\hbox{ if $\ell_a \not\equiv 3 \pmod{4}$ and $\ell_b \equiv 3 \pmod{4}$.} 
\end{align*}
\end{proposition}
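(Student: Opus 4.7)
The plan is to reconstruct R\'edei's original argument by writing down an explicit generator of $K_{(a,b)}$, and then computing the decomposition of $\ell_a$ in $K_{(a,b)}$ via local considerations that reduce to a $4$th power residue congruence modulo $\ell_a$.

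First I would explicitly parametrize $K_{(a,b)}$. Under the assumption $\ell_a \not\equiv 3 \pmod{4}$, the hypothesis (\ref{eqv:abba}) is equivalent to the quadratic residue conditions $\big(\tfrac{\ell_a^\ast}{\ell_b}\big) = \big(\tfrac{\ell_b^\ast}{\ell_a}\big) = 1$ (with the usual mod $8$ interpretation when $\ell_a=2$). By the Hasse--Minkowski principle these guarantee a primitive integer solution $(x,y,z)$ to the ternary form
\[
X^2 - \ell_a^\ast Y^2 - \ell_b^\ast Z^2 = 0 .
\]
Fixing the standard R\'edei normalization, the element $\alpha = x + y\sqrt{\ell_a^\ast} \in \mathbb Z[\sqrt{\ell_a^\ast}]$ has norm $\ell_b^\ast z^2$, and $\mathbb Q(\sqrt{\ell_a^\ast\ell_b^\ast}, \sqrt{\alpha})$ is a cyclic quartic extension of $\mathbb Q(\sqrt{\ell_a^\ast\ell_b^\ast})$ unramified outside $\infty$; by the uniqueness stated in the paper it coincides with $K_{(a,b)}$.

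Next I would compute the splitting of $\ell_a$. The unique prime $\mathfrak p$ of $\mathbb Q(\sqrt{\ell_a^\ast \ell_b^\ast})$ above $\ell_a$ has $e=2$, $f=1$ (ramified from $\sqrt{\ell_a^\ast}$, split from $\sqrt{\ell_b^\ast}$), so $[\ell_a,\ell_b,\ell_a] = +1$ iff $\mathfrak p$ splits further in $K_{(a,b)}$, iff $\alpha$ is a square in the completion $\mathbb Q(\sqrt{\ell_a^\ast\ell_b^\ast})_{\mathfrak p}$. Reducing $\alpha = x + y\sqrt{\ell_a^\ast}$ modulo $\mathfrak p$ (where $\sqrt{\ell_a^\ast} \equiv 0$) and applying Hensel's lemma reduces the problem to whether $x$ is a square modulo $\ell_a$. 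The relation $x^2 \equiv \ell_b^\ast z^2 \pmod{\ell_a}$ combined with the Euler criterion $x^{(\ell_a-1)/2} \equiv \pm 1 \pmod{\ell_a}$ (or the analogous mod $16$ congruence when $\ell_a = 2$) identifies the resulting sign with $\big(\tfrac{\ell_b^\ast}{\ell_a}\big)_4$. Since $\ell_b^\ast = \ell_b$ or $-\ell_b$ according as $\ell_b \equiv 1$ or $\equiv 3 \pmod{4}$, this gives both displayed formulas. The additional equality $[\ell_a,\ell_b,\ell_a] = [\ell_a,\ell_b,\ell_b]$ when $\ell_b \equiv 3 \pmod 4$ reflects the fact that $-1$ is then a nonsquare modulo $\ell_b$, so the roles of $\ell_a$ and $\ell_b$ inside the ternary form are interchanged by multiplication by $-1$, and the symmetric computation at a prime above $\ell_b$ returns the same value.

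The hard part will be the delicate parity and sign bookkeeping in the normalization of $(x,y,z)$, especially in the exceptional case $\ell_a = 2$ where the $4$th power symbol is defined by $\big(\tfrac{z}{2}\big)_4 = (-1)^{(z-1)/8}$, and verifying that the constructed $\mathbb Q(\sqrt{\ell_a^\ast \ell_b^\ast}, \sqrt{\alpha})$ really is unramified outside $\infty$ (which is what pins it down as $K_{(a,b)}$). Both steps are carried out in \cite{Red39} and can be imported essentially unchanged.
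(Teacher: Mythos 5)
Your proposal is essentially the paper's own proof: the paper simply cites R\'edei \cite[(53)--(53'')]{Red39} (with a remark that one could instead translate the statement into known results on narrow class groups of $\mathbb Q(\sqrt{\ell_a^{\ast}\ell_b^{\ast}})$ as in \cite{Yam84}), and your sketch is a faithful reconstruction of that cited argument, deferring the same delicate normalization and ramification checks to \cite{Red39} that the paper does. The outline you give --- solving $X^2-\ell_a^{\ast}Y^2-\ell_b^{\ast}Z^2=0$, forming $\alpha=x+y\sqrt{\ell_a^{\ast}}$, and reading off the splitting of the ramified prime from the local square class of $\alpha$ --- is correct as far as it goes.
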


\begin{proof}
See \cite[(53)--(53'')]{Red39}. 
Alternatively, 
since ${[\ell_a,\ell_b,\ell_a]}=-1$ if and only if the narrow class number of $\mathbb Q(\sqrt{\ell_a^{\ast}\ell_b^{\ast}})$ is not divisible by $8$ and the narrow ideal class of $\mathfrak{l}_a$ is nontrivial, 
the statement is obtained as a translation of well known results on the narrow class groups 
(cf.\ e.g.\ \cite[Propositions 3.3--3.6]{Yam84}). 
\end{proof}

As a special case of Theorem \ref{thm:presQ}, 
we obtain the following result similar to \cite[Theorem 3.12]{Vog05}.

\begin{corollary}\label{cor:free4}
Suppose that $k=\mathbb Q$, $\ell_0=p=2$ and $S=\{\ell_1,\cdots,\ell_d,\infty\}$. 
Assume that $\ell_i \equiv 1 \pmod{4}$ for all $1 \le i \le d$, 
and that 
\[
\mathrm{lk}(\ell_a,\ell_b) \equiv \mathrm{lk}(\ell_a,\ell_b) \equiv 0 \pmod{2}
\]
for all pairs $(a,b)$ with $0 \le a, b \le d$. 
Then $d+1$ relations $r_i$ of the presentation of $\widetilde{G}_S(\mathbb Q)$ 
in Theorem \ref{thm:presQ} are written in the form 
\[
r_i \equiv \underset{a < b}{\textstyle{\prod}} [x_a,x_b,x_i]^{\mu_2(abi)}
\cdot \stackrel{i-1}{\underset{a=0}{\textstyle{\prod}}} [x_a,x_i,x_a]^{\mu_2(aia)}
\cdot \stackrel{d}{\underset{b=i+1}{\textstyle{\prod}}} [x_i,x_b,x_b]^{\mu_2(ibb)}
\mod{F_{(4)}}
\]
for each $0 \le i \le d$, where $F_{(4)}$ denotes the $4$th step of the Zassenhaus filtration of $F$. 
\end{corollary}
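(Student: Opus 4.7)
The plan is to compute $r_i$ modulo $F_{(4)}$ by standard commutator manipulations in the free pro-$2$ group, then identify the resulting coefficients via Propositions \ref{Redei:ciab} and \ref{Redei:4th}.

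First I will reduce to a pure commutator. Since $\ell_i \equiv 1 \pmod 4$ for $1 \le i \le d$, we have $\ell_i - 1 \in 4\mathbb Z_2$, so $x_i^{\ell_i - 1} \in F^4 \subset F_{(4)}$ (by the Jennings-Zassenhaus description of $F_{(4)}$ for $p = 2$), giving $r_i \equiv [x_i^{-1}, y_i^{-1}] \pmod{F_{(4)}}$ uniformly. The vanishing of all $\mathrm{lk}(\ell_a, \ell_b)$ mod $2$ combined with Theorem \ref{thm:presQ} puts $y_i \in F_{(2)}$, so $[x_i, y_i] \in F_{(3)}$. Combining the identity $[x^{-1}, y^{-1}] = (xy)[x, y]^{-1}(xy)^{-1}$ with $[F_{(1)}, F_{(3)}] \subset F_{(4)}$ (trivial conjugation on $F_{(3)}/F_{(4)}$) and $(F_{(3)})^2 \subset F_{(6)} \subset F_{(4)}$ (trivial squaring) yields $r_i \equiv [x_i, y_i] \pmod{F_{(4)}}$.

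Next I will split $y_i \equiv y_i^{\mathrm{lin}} y_i^{\mathrm{quad}} \pmod{F_3}$ via (\ref{def:ciab}), with $y_i^{\mathrm{lin}} = \prod_j x_j^{\mathrm{lk}(\ell_i, \ell_j)}$ and $y_i^{\mathrm{quad}} = \prod_{a < b}[x_a, x_b]^{c_{iab}}$, and use bilinearity of $[x_i, \cdot]$ on $F_{(2)}$ modulo $F_{(4)}$ (from $[F_{(2)}, F_{(3)}] \subset F_{(5)}$) to obtain $[x_i, y_i] \equiv [x_i, y_i^{\mathrm{quad}}][x_i, y_i^{\mathrm{lin}}] \pmod{F_{(4)}}$. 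The quadratic factor gives $\prod_{a < b}[x_a, x_b, x_i]^{c_{iab}}$ (mod $2$), and Proposition \ref{Redei:ciab} yields $c_{iab} \equiv \mu_2(abi) \pmod 2$. For the linear factor, the standard expansion $[x_i, x_j^{2m}] \equiv [x_i, x_j]^{2m} \cdot [x_i, x_j, x_j]^{\binom{2m}{2}} \pmod{F_{(4)}}$ kills its first factor in $(F_{(2)})^2 \subset F_{(4)}$ and uses $\binom{2m}{2} \equiv m \pmod 2$ to yield $\prod_{j \ne i}[x_i, x_j, x_j]^{m_{ij}}$ with $m_{ij} = \mathrm{lk}(\ell_i, \ell_j)/2$.

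The main obstacle is matching $m_{ij} \pmod 2$ with the Milnor numbers $\mu_2(aia)$ or $\mu_2(ibb)$ appearing in the statement, since those are coefficients extracted from $y_a$ or $y_b$ rather than $y_i$---essentially a cyclic symmetry of length-$3$ mod $2$ Milnor numbers valid because the length-$2$ ones vanish. I will short-circuit this by equating both sides with the same fourth-power residue symbol: the definition of the linking number gives $(-1)^{m_{ij}} = \left(\frac{\ell_i}{\ell_j}\right)_4$ (since $\ell_i$ is a fourth-power residue mod $\ell_j$ iff $m_{ij}$ is even), while Proposition \ref{Redei:4th} (which applies because every $\ell_i \not\equiv 3 \pmod 4$) combined with Proposition \ref{Redei:ciab} and the symmetry of R\'edei symbols in their first two entries gives $(-1)^{\mu_2(aia)} = [\ell_a, \ell_i, \ell_a] = \left(\frac{\ell_i}{\ell_a}\right)_4$ for $j = a < i$, and similarly $(-1)^{\mu_2(ibb)} = \left(\frac{\ell_i}{\ell_b}\right)_4$ for $j = b > i$. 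Using $[x_i, x_a, x_a] \equiv [x_a, x_i, x_a] \pmod{F_{(4)}}$ mod $2$ (from $[x_a, x_i] \equiv [x_i, x_a]^{-1}$ together with $(F_{(3)})^2 \subset F_{(4)}$), I will split the $j$-product by the sign of $i - j$ and assemble the claimed formula.
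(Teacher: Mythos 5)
Your proposal is correct and follows essentially the same route as the paper: reduce $r_i$ to $[x_i^{-1},y_i^{-1}]$ modulo $F_{(4)}=F^4F_2^2F_4$, expand via the shape of $y_i$ in (\ref{def:ciab}) using $[F_{(n)},F_{(4-n)}]\subset F_{(4)}$, and identify the exponents of the triple commutators with the mod $2$ Milnor numbers through Propositions \ref{Redei:ciab} and \ref{Redei:4th} (the paper encodes your $m_{ij}=\mathrm{lk}(\ell_i,\ell_j)/2$ as $e_{ij}$ with $2e_{ij}\equiv\mathrm{lk}(\ell_i,\ell_j)\pmod{4}$ and matches $(-1)^{e_{ij}}=\big(\frac{\ell_i}{\ell_j}\big)_4=[\ell_i,\ell_j,\ell_j]$ exactly as you do via the quartic residue symbol). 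The only nitpick is that with the paper's convention $[h,g]=h^{-1}g^{-1}hg$ your identity $(xy)[x,y]^{-1}(xy)^{-1}$ equals $[x^{-1},y^{-1}]^{-1}$ rather than $[x^{-1},y^{-1}]$, but as you note this is harmless since inversion is trivial on $F_{(3)}/F_{(4)}$.
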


\begin{proof}
Note that $F_{(2)}=F^2F_2$, $F_{(3)}=F^4F_2^2F_3$ and $F_{(4)}=F^4F_2^2F_4$ (cf.\ \cite[Theorems 11.2 and 12.9]{DDMS}). 
By (\ref{def:ciab}), we have 
\[
y_i^{-1} \equiv 
x_0^{2e_{i0}}
x_1^{2e_{i1}}
\cdots 
x_d^{2e_{id}} 
\underset{a < b}{\textstyle{\prod}} [x_a,x_b]^{c_{iab}}
\mod{F_{(3)}}
\]
with some $e_{ij} \in \{0,1\}$ such that $2e_{ij} \equiv \mathrm{lk}(\ell_i,\ell_j) \pmod{4}$. 
Then $e_{ii}=0$ and 
\[
(-1)^{e_{ij}}={\textstyle \big(\frac{\ell_i}{\ell_j}\big)_4}=[\ell_i,\ell_j,\ell_j]
\]
for $j \neq i$ by Proposition \ref{Redei:4th}. 
Since $[F_{(n)},F_{(4-n)}] \subset F_{(4)}$ for $n \in \{1,2\}$, a direct calculation shows that 
\[
r_i \equiv {[x_i^{-1},y_i^{-1}]} \equiv 
\underset{a < b}{\textstyle{\prod}} [x_a,x_b,x_i]^{c_{iab}}
\cdot \stackrel{d}{\underset{j=0}{\textstyle{\prod}}} [x_i,x_j,x_j]^{e_{ij}}
\mod{F_{(4)}}. 
\]
By Proposition \ref{Redei:ciab}, we obtain the claim. 
\end{proof}

\begin{example}\label{ex:Borromean}
Put $S=\{\ell_1,\ell_2,\infty\}$ with $\ell_1=113$ and $\ell_2=593$. 
Then $\mathrm{lk}(\ell_a,\ell_b) \equiv 0 \pmod{4}$ for any $0 \le a,b \le 2$. 
By Proposition \ref{Redei:ciab} and PARI/GP \cite{pari}, 
one can see that $\mu_2(abi)=1$ if $abi$ is a permutation of $012$, 
and $\mu_2(abi)=0$ otherwise. 
Then 
\begin{align*}
r_0 \equiv [x_1,x_2,x_0], \quad 
r_1 \equiv [x_2,x_0,x_1], \quad 
r_2 \equiv [x_0,x_1,x_2] \mod{F_{(4)}}
\end{align*}
by Corollary \ref{cor:free4}, 
and hence $(2,113,593)$ is also a triple of (proper) Borromean primes modulo $2$ 
in the sense of \cite{Mor,Vog05}. 
\end{example}

\begin{example}
For $S=\{\ell_1,\ell_2,\infty\}$ with $\ell_1=337$ and $\ell_2=593$, 
one can see that $\mathrm{lk}(\ell_a,\ell_b) \equiv 0 \pmod{4}$ and 
$\mu_2(abi)=0$ for any $0 \le a,b,i \le 2$ 
by Proposition \ref{Redei:ciab} and PARI/GP \cite{pari}. 
Then 
\[
\widetilde{G}_S(\mathbb Q)/\widetilde{G}_S(\mathbb Q)_{(4)}
\simeq F/F_{(4)}
\]
by Corollary \ref{cor:free4}. 
\end{example}

\subsection{Mild pro-$p$ groups.}\label{sbsec:mild} 
A finitely presented pro-$p$ group $G$ is said to be `mild' 
(with respect to the Zassenhaus filtration) 
when 
$G$ has a presentation $F/R \simeq G$ with a system of relations which make a `strongly free sequence' in the graded Lie algebra $\mathrm{gr}\,F=\bigoplus_{n \ge 1} F_{(n)}/F_{(n+1)}$ (cf.\ \cite{Lab06}). 
Labute \cite{Lab06} gave the first example of mild $G_S(\mathbb Q)$, 
in particular having the cohomological dimension $cd(G_S(\mathbb Q))=2$, 
by showing that $G_S(\mathbb Q)$ is mild if $S$ is a `circular set'. 
Such criteria for mildness have been reformulated as a `cup-product criterion' (cf.\ \cite[Theorem 5.5]{Sch07} and \cite{For11,Gar15,LM11}), 
which induces the existence of mild $\widetilde{G}_S(k)$ in various situations (cf.\ \cite{BLM13}). 
We also obtain circular sets of primes including $\ell_0=p=2$ as follows. 

\begin{theorem}\label{thm:mild}
Suppose that $k=\mathbb Q$, $S^{\ast} =\{\ell_1,\cdots,\ell_d\}$, 
\[
S=\left\{
\begin{array}{ll}
S^{\ast} & \hbox{if } p \neq 2, \\ 
S^{\ast} \cup \{q\} & \hbox{if } p=2, 
\end{array}
\right.
\]
and $d=|S^{\ast}|>1$ is odd, 
where $q \equiv 3 \pmod{4}$ and $q ,\infty \not\in S^{\ast}$. 
Put $\ell_0=p$. 
Assume that $\{p\} \cup S^{\ast}$ is a `circular set', i.e., 
there is a bijection $\sigma : \mathbb Z/(d+1)\mathbb Z \rightarrow \{0,1,\cdots,d\}$ satisfying the following conditions: 
\begin{enumerate}

\item 
$\ell_{\sigma(i)} \not\equiv 3 \pmod{4}$ if $p=2$ and $i$ is even, 

\item 
$\mathrm{lk}(\ell_{\sigma(i)},\ell_{\sigma(j)}) \equiv 0 \pmod{p}$ if $i$ and $j$ are even, 

\item 
$\underset{i=0}{\stackrel{d}{\prod}} 
\widetilde{\mathrm{lk}}(\ell_{\sigma(i)},\ell_{\sigma(i+1)}) 
\not\equiv 
\ \underset{i=0}{\stackrel{d}{\prod}} 
\widetilde{\mathrm{lk}}(\ell_{\sigma(i+1)},\ell_{\sigma(i)})
\pmod{p}$, 
where 
\[
\widetilde{\mathrm{lk}}(\ell_i,\ell_j)=\left\{
\begin{array}{ll}
\mathrm{lk}(\ell_i,\ell_j) 
& \hbox{if } p \neq 2, \\ 
\mathrm{lk}(\ell_i,\ell_j) + \delta_{\ell_j+4\mathbb Z,3+4\mathbb Z} \mathrm{lk}(\ell_i,q) 
& \hbox{if } p=2. 
\end{array}
\right.
\]

\end{enumerate}
Then $\widetilde{G}_S(\mathbb Q)$ is a mild pro-$p$ group 
(with respect to the Zassenhaus filtration) of deficiency zero. 
In particular, the cohomological dimension $cd(\widetilde{G}_S(\mathbb Q)) =2$, 
the Euler-Poincar\'e characteristic $\chi(\widetilde{G}_S(\mathbb Q))=1$, 
and $\widetilde{G}_S(\mathbb Q)$ is not $p$-adic analytic. 
\end{theorem}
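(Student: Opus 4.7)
The plan is to combine the Koch-type presentation of Theorem \ref{thm:presQ} with a cup-product / strongly-free criterion for mildness, and then deduce the cohomological consequences.

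First I would invoke Theorem \ref{thm:presQ} to obtain the minimal presentation $1 \to R \to F \to \widetilde{G}_S(\mathbb Q) \to 1$ with $F$ free of rank $d+1$ on $x_0,\dots,x_d$ and $R$ normally generated by $d+1$ explicit relations $r_i$. Combined with Proposition \ref{prop:genrk} and Theorem \ref{thm:relrk} (and Remark \ref{rem:trivB}, which ensures $\widetilde B_S=(k^\times)^p$ and hence $\Sha^2=0$ in this setup), this gives $\widetilde d_S=\widetilde r_S=d+1$, so $\widetilde{G}_S(\mathbb Q)$ has deficiency zero. Next I would reduce the relations modulo $F_{(3)}$: since $r_0\equiv[x_0^{-1},y_0^{-1}]$ and $r_i\equiv x_i^{\ell_i-1}[x_i^{-1},y_i^{-1}]$, and because $\ell_i\equiv 1\pmod p$ and the Frobenius expansions of $y_i$ in Theorem \ref{thm:presQ} involve exactly the linking numbers $\mathrm{lk}(\ell_i,\ell_j)$ (with the extra $q$-correction when $p=2$, $\infty\notin S$), the initial form $\rho_i\in F_{(2)}/F_{(3)}$ is a quadratic Lie element
\[
\rho_i \;\equiv\; \sum_{j=0}^{d} \widetilde{\mathrm{lk}}(\ell_i,\ell_j)\,[\xi_i,\xi_j] \pmod{F_{(3)}},
\]
where $\xi_j$ denotes the image of $x_j$ in $\mathrm{gr}^1 F$. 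Hypothesis (1) is used here to absorb the $q$-term into the congruence so that the initial form sits in the genuine quadratic part of the graded restricted Lie algebra.

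The heart of the argument is to check that $(\rho_0,\dots,\rho_d)$ is a strongly free sequence in $\mathrm{gr}\,F$, which by Labute's theorem (\cite{Lab06}, reformulated via the cup-product criterion in \cite[Theorem 5.5]{Sch07} and extended in \cite{LM11,For11,Gar15}) is equivalent to mildness with $cd(\widetilde{G}_S(\mathbb Q))=2$. Hypothesis (2) says the linking graph on $\{\ell_{\sigma(0)},\dots,\ell_{\sigma(d)}\}$ has precisely the circular shape of Labute, so the quadratic dual sits in the polynomial ring on the odd-indexed generators modulo relations indexed by even positions (or vice versa), and the combinatorial verification of strong freeness reduces to the non-vanishing of a single determinant. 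That determinant is, up to units, the difference of the two alternating products $\prod_i \widetilde{\mathrm{lk}}(\ell_{\sigma(i)},\ell_{\sigma(i+1)})-\prod_i \widetilde{\mathrm{lk}}(\ell_{\sigma(i+1)},\ell_{\sigma(i)})$ modulo $p$, which is exactly hypothesis (3). This determinantal calculation, together with tracking the $q$-twist through the $p=2$ case, is where I expect the main technical obstacle to lie: one must reproduce Labute's cycle computation and ensure the cup-product pairing is still non-degenerate after replacing $G_S(\mathbb Q)$ by its `cyclotomically ramified' enlargement, in the spirit of the adaptation carried out in \cite{BLM13} for analogous pro-$p$ groups.

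Finally, once mildness and $cd(\widetilde{G}_S(\mathbb Q))=2$ are established, I obtain $\chi(\widetilde{G}_S(\mathbb Q))=1-\widetilde d_S+\widetilde r_S=1$ from the deficiency-zero count. For the non-analyticity, I note that $\widetilde d_S=d+1\ge 3$ (since $d>1$ is odd, hence $d\ge 3$, and even $d=1$ would not arise), and a $p$-adic analytic pro-$p$ group $G$ of cohomological dimension $2$ and deficiency zero must satisfy the Golod--Shafarevich inequality $\widetilde r_S>\tfrac14\widetilde d_S^{\,2}$ (\cite[Theorem D1]{DDMS}); for $d+1\ge 4$ this forces $d+1>\tfrac14(d+1)^2$, i.e.\ $d+1<4$, a contradiction. (Equivalently, a mild pro-$p$ group with $\widetilde d_S\ge 3$ contains a non-abelian free pro-$p$ subgroup and cannot be $p$-adic analytic.) This completes the deduction of all three conclusions of the theorem.
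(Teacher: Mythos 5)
Your proposal follows essentially the same route as the paper's proof: the Koch type presentation of Theorem \ref{thm:presQ}, the reduction of the relations to their quadratic initial forms governed by $\widetilde{\mathrm{lk}}$, and the cup-product (strongly free sequence) criterion with the odd/even splitting $H^1=U\oplus V$ along $\sigma$, where condition (1) kills both the diagonal terms $\chi_{\sigma(i)}\cup\chi_{\sigma(i)}$ coming from $x_i^{\ell_i-1}$ and the $q$-twist for even pairs, condition (2) gives $V\cup V=0$, and condition (3) is exactly the nonvanishing of the cyclic determinant. The one slip is that you assert $\widetilde{r}_S=\widetilde{d}_S=d+1$ (deficiency zero) directly from Proposition \ref{prop:genrk} and Theorem \ref{thm:relrk}, which only yield the upper bound $\widetilde{r}_S\le d+1$; as in the paper, the equality should be deduced afterwards from the cup-product computation (the $d+1$ linearly independent classes in $U\cup V$), not assumed beforehand.
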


\begin{proof}
The pro-$p$ group $\widetilde{G}_S(\mathbb Q)$ is mild if 
the $\mathbb F_p$-vector space 
$H^1(\widetilde{G}_S(\mathbb Q))$ has a decomposition 
$H^1(\widetilde{G}_S(\mathbb Q))=U \oplus V$ 
with the subspaces $U$, $V$ such that 
\[
U \cup V =H^2(\widetilde{G}_S(\mathbb Q)) \quad \hbox{and} \quad V \cup V =\{0\} 
\]
(cf.\ e.g.\ \cite[Cup-product criterion]{Gar15}), 
where $U \cup V$ (resp.\ $V \cup V$) denotes the image of $U \otimes V$ (resp.\ $V \otimes V$) by the cup product 
\[
\cup : H^1(\widetilde{G}_S(\mathbb Q)) \otimes H^1(\widetilde{G}_S(\mathbb Q)) 
\rightarrow H^2(\widetilde{G}_S(\mathbb Q)). 
\]
Let 
\[
\entrymodifiers={+!!<0pt,\fontdimen22\textfont2>}
\xymatrix{
1 \ar[r] & R \ar[r] & F \ar[r]^-{\pi} & \widetilde{G}_S(\mathbb Q) \ar[r] & 1
}
\]
be the minimal presentation of $\widetilde{G}_S(\mathbb Q)$ 
obtained in Theorem \ref{thm:presQ}. 
Put 
\[
b_{i,j}=\widetilde{\mathrm{lk}}(\ell_i,\ell_j)+p\mathbb Z_p \in \mathbb F_p
\]
for $0 \le i,j \le d$. 
Then the normal subgroup $R$ of $F=\langle x_0,\cdots,x_d \rangle$ is normally generated by $d+1$ relations 
$r_i=x_i^{(\ell_i-1)(1-\delta_{i,0})}[x_i^{-1},y_i^{-1}]$ $(0 \le i \le d)$ 
with $y_i$ written in the form 
\[
y_i \equiv 
\underset{j=0}{\stackrel{d}{\textstyle{\prod}}} x_j^{b_{i,j}} \mod{F_{(2)}}. 
\]
Let $\chi_i \in H^1(F)$ be the dual element of $x_i$, i.e., $\chi_i(x_j)=\delta_{i,j}$ for $0 \le j \le d$. 
Then $\chi_0,\cdots,\chi_d$ form a basis of the $\mathbb F_p$-vector space $H^1(F) \simeq H^1(\widetilde{G}_S(\mathbb Q))$. 
For each $r \in R$, the trace map 
\[
\mathrm{tr}_r : H^2(\widetilde{G}_S(\mathbb Q)) \rightarrow \mathbb F_p : \varphi \mapsto \mathrm{tg}^{-1}(\varphi)(r)
\]
is defined as an element of the dual space $H^2(\widetilde{G}_S(\mathbb Q))^{\vee}$, 
where $\mathrm{tg} : H^1(R)^{F/R} \simeq \mathrm{Hom}(R/R^p[F,R],\mathbb F_p) \stackrel{\simeq}{\longrightarrow} H^2(\widetilde{G}_S(\mathbb Q))$ 
is the transgression isomorphism. 
Then 
\[
H^2(\widetilde{G}_S(\mathbb Q))^{\vee} =\ 
\underset{i=0}{\stackrel{d}{\textstyle{\sum}}} \mathbb F_p \mathrm{tr}_{r_i}. 
\]
For $i \ge 0$ and a multi-index $I=(i_1 i_2)$ with $0 \le i_1, i_2 \le d$, 
we have 
\[
\mathrm{tr}_{r_i}(\chi_{i_1} \cup \chi_{i_2})=-\varepsilon_{I,p}(r_i)
=\delta_{p,2} \delta_{i,i_1}\delta_{i,i_2} \delta_{\ell_i+4\mathbb Z,3+4\mathbb Z}
- \delta_{i,i_1} b_{i,i_2}
+ \delta_{i,i_2} b_{i,i_1}
\]
by the basic properties of mod $p$ Magnus coefficients (cf.\ e.g.\ \cite[Theorem 2.4]{Gar14} and \cite[Proposition 2.18]{Vog05}). 

Under the assumptions, we put 
\[
U= \underset{j\,:\,\mathrm{odd}}{\textstyle{\bigoplus}} \mathbb F_p \chi_{\sigma(j)} 
\quad \hbox{and} \quad
V= \underset{j\,:\,\mathrm{even}}{\textstyle{\bigoplus}} \mathbb F_p \chi_{\sigma(j)} . 
\]
If $i_1$ and $i_2$ are even, 
we have $\mathrm{tr}_{r_i}(\chi_{\sigma(i_1)} \cup \chi_{\sigma(i_2)})=0$ 
for any $0 \le i \le d$ by the assumption. 
Hence $V \cup V=0$ by the nondegeneracy of the pairing 
$H^2(\widetilde{G}_S(\mathbb Q)) \times H^2(\widetilde{G}_S(\mathbb Q))^{\vee}  \rightarrow \mathbb F_p$. 
Put 
\[
\varphi_j = \chi_{\sigma(j)} \cup \chi_{\sigma(j+1)}=-\chi_{\sigma(j+1)} \cup \chi_{\sigma(j)} \in U \cup V, 
\]
and put 
\[
a_{i,j}=\mathrm{tr}_{r_{\sigma(i)}} \varphi_j
= -\delta_{i,j}b_{\sigma(j),\sigma(j+1)}+\delta_{i,j+1}b_{\sigma(j+1),\sigma(j)} \in \mathbb F_p
\]
for $0 \le i,j \le d$. 
Then the $(d+1) \times (d+1)$ matrix 
\[
A=(a_{i,j})_{i,j}=
\left(
\begin{array}{rrrrr}
-b_{\sigma(0),\sigma(1)} &                          &        &                            &  b_{\sigma(0),\sigma(d)} \\
 b_{\sigma(1),\sigma(0)} & -b_{\sigma(1),\sigma(2)} &        &                            &                          \\
                         &  b_{\sigma(2),\sigma(1)} & \ddots &                            &                          \\
                         &                          & \ddots & -b_{\sigma(d-1),\sigma(d)} &                          \\
                         &                          &        &  b_{\sigma(d),\sigma(d-1)} & -b_{\sigma(d),\sigma(0)} 
\end{array}
\right)
\]
has the nonzero determinant 
$\det A = \prod_{i=0}^{d} b_{\sigma(i),\sigma(i+1)} - \prod_{i=0}^{d} b_{\sigma(i+1),\sigma(i)} \neq 0$ by the assumption. 
The linearity of trace maps $\mathrm{tr}_{r_{\sigma(i)}}$ yields that 
the elements $\Phi_j \in U \cup V$ defined by 
\[
(\Phi_0,\Phi_1, \cdots, \Phi_d)=
(\varphi_0,\varphi_1, \cdots, \varphi_d)A^{-1} 
\]
satisfy $\mathrm{tr}_{r_{\sigma(i)}}\Phi_j=\delta_{i,j}$ for any $0 \le i,j \le d$. 
Hence the $d+1$ elements $\Phi_j \in U \cup V$ are linearly independent. 
This implies that $U \cup V=H^2(\widetilde{G}_S(\mathbb Q))$ 
and the deficiency 
$\widetilde{d}_S-\widetilde{r}_S=0$. 
By the cup-product criterion, 
$\widetilde{G}_S(\mathbb Q)$ is a mild pro-$p$ group. 
The latter statement also holds as the basic properties of a mild pro-$p$ group (cf.\ e.g.\ \cite[Theorem 2.4]{Gar14}). 
\end{proof}

\begin{example}
For $\ell_0=p=3$ and $(\ell_1,\ell_2,\ell_3)=(13,73,61)$, 
one can easily see that 
$\mathrm{lk}(\ell_{0},\ell_{2}) \equiv \mathrm{lk}(\ell_{2},\ell_{0}) \equiv 0 \pmod{3}$ and 
$\mathrm{lk}(\ell_{0},\ell_{1}) \mathrm{lk}(\ell_{1},\ell_{2}) \mathrm{lk}(\ell_{2},\ell_{3}) \mathrm{lk}(\ell_{3},\ell_{0}) \not\equiv 0 \equiv \mathrm{lk}(\ell_{0},\ell_{3}) \pmod{3}$. 
Then $\{p\} \cup S=\{\ell_0,\ell_1,\ell_2,\ell_3\}$ is a circular set with $\sigma$ such that $\sigma(i) \equiv i \pmod{4}$ for all $i$, 
and hence $\widetilde{G}_S(\mathbb Q)$ is a mild pro-$3$ group by Theorem \ref{thm:mild}. 
\end{example}

\begin{example}
For $\ell_0=p=2$ and $S=\{\ell_1,\ell_2,\ell_3,q\}$, 
suppose that 
$\ell_1 \equiv 7 \pmod{8}$, 
$\ell_2 \equiv 1 \pmod{8}$, 
$\ell_3 \equiv 5 \pmod{8}$, 
$\big(\frac{\ell_1}{\ell_2}\big)=\big(\frac{\ell_2}{\ell_3}\big)=-1$, 
and $q \equiv 3 \pmod{8}$. 
For example, $(\ell_1,\ell_2,\ell_3,q)=(7, 17, 5, 3)$. 
Then the assumption of Theorem \ref{thm:mild} is satisfied for $\sigma$ such that $\sigma(i) \equiv i \pmod{4}$ for all $i$, 
and hence $\widetilde{G}_S(\mathbb Q)$ is a mild pro-$2$ group. 
\end{example}

\section{Alexander invariants in Iwasawa theory}\label{sec:Iwapoly}

\subsection{Iwasawa polynomial.}\label{sbsec:Iwapoly} 
Let $\varLambda=\mathbb Z_p[[T]]$ be the ring of formal power series in a variable $T$ with coefficients in $\mathbb Z_p$. 
Any finitely generated $\varLambda$-module $X$ has a finite presentation 
\[
\entrymodifiers={+!!<0pt,\fontdimen22\textfont2>}
\xymatrix{
\varLambda^{d_2} \ar[r]^-{Q} & \varLambda^{d_1} \ar[r] & X \ar[r] & 0
}
\]
with a $d_1 \times d_2$ presentation matrix $Q$ such that $d_2 \ge d_1 \ge 1$. 
Independently on the choice of such presentation, 
the $i$th elementary ideal ($i$th Fitting ideal) $E_i(X)$ is defined as an ideal of $\varLambda$ 
generated by $(d_1-i)\times(d_1-i)$ minors of $Q$ if $0 \le i < d_1$, 
and $E_i(X)=\varLambda$ if $i \ge d_1$. 
The `divisorial hull' $\widetilde{E}$ of an ideal $E$ is defined as the intersection of all principal ideals containing $E$ (cf.\ \cite{Hil}). 
Since $\varLambda$ is a unique factorization domain, $\widetilde{E}$ is a principal ideal generated by the greatest common divisor of generators of $E$ if $E \neq \{0\}$. 
Since $\mathrm{Ann}(X)^{d_1} \subset E_0(X) \subset \mathrm{Ann}(X)$ for the annihilator ideal $\mathrm{Ann}(X)$ of $X$ (cf.\ e.g.\ \cite[Appendix]{MW84}), 
we have $\widetilde{E}_0(X)=\{0\}$ (i.e., $E_0(X)=\{0\}$) 
if and only if $X$ is not $\varLambda$-torsion. 

Iwasawa polynomials are defined analogous to Alexander polynomials as follows. 
For simplicity and convenience, 
we assume that $k \cap \mathbb Q^{\mathrm{cyc}}=\mathbb Q$ and 
$k^{\mathrm{cyc}}/k$ is totally ramified at any $v \in P$. 
Let $\overline{\gamma}$ be the element of $\mathrm{Gal}(k(\mu_{p^{\infty}})/k)$ such that $\overline{\gamma}(\zeta)=\zeta^{\kappa}$ for any $\zeta \in \mu_{p^{\infty}}$, 
where $\kappa=1+p$ or $\kappa=5$ according to $p \neq 2$ or $p=2$. 
Then $\overline{\gamma}|_{k^{\mathrm{cyc}}}$ is a generator of $\mathrm{Gal}(k^{\mathrm{cyc}}/k)$, 
and there is $\widetilde{\gamma} \in \mathrm{Gal}((k^{\mathrm{cyc}})_S/k_S)$ such that $\widetilde{\gamma}|_{k^{\mathrm{cyc}}}=\overline{\gamma}|_{k^{\mathrm{cyc}}}$. 
Recall that $S \cap P =\emptyset$. 
Let $K/k$ be a finite subextension of $k_S/k$. 
Then $K \cap k^{\mathrm{cyc}}=k$, and $K^{\mathrm{cyc}}/K$ is also totally ramified at any primes lying over $p$. 
Put $\gamma = \widetilde{\gamma}|_{K^{\mathrm{cyc}}}$, which is the generator of $\varGamma=\mathrm{Gal}(K^{\mathrm{cyc}}/K)$ such that $\gamma|_{k^{\mathrm{cyc}}}=\overline{\gamma}|_{k^{\mathrm{cyc}}}$. 
The left action of $\varGamma$ on $G_S(K^{\mathrm{cyc}})$ is defined by ${^{\gamma}g}=\widetilde{\gamma}g\widetilde{\gamma}^{-1}$ for $g \in G_S(K^{\mathrm{cyc}})$. 
This induces the action of $\varLambda$ on $X=G_{\varSigma}(K^{\mathrm{cyc}})^{\mathrm{ab}}$ for any $\varSigma \subset S$ 
via the isomorphism $\varLambda \simeq \mathbb Z_p[[\varGamma]] : 1+T \leftrightarrow \gamma$, 
where $\mathbb Z_p[[\varGamma]] = \varprojlim \mathbb Z_p[\varGamma/\varGamma^{p^n}]$ is the complete group ring. 
Then the Iwasawa module $X$ is a finitely generated $\varLambda$-torsion $\varLambda$-module. 
The `Iwasawa polynomial' $\varDelta(T)=p^{\mu}P(T) \in \mathbb Z_p[T]$ of $X$ is defined as the generator of 
the divisorial hull $\widetilde{E}_0(X)=\varDelta(T)\varLambda$ of $E_0(X)$ 
(cf.\ Remark \ref{rem:dvhll} below) 
with $0 \le \mu \in \mathbb Z$ and monic $P(T) \equiv T^{\lambda} \pmod{p}$, 
where $\lambda=\deg P(T)$. 
There exists $\nu \in \mathbb Z$ satisfying Iwasawa's class number formula 
$|G_{\varSigma}(K_n)^{\mathrm{ab}}|=p^{\lambda n + \mu p^n + \nu}$ 
for all sufficiently large $n$ (cf.\ e.g.\ \cite{Was}), 
where $K_n$ is the fixed field of $\varGamma^{p^n}$. 
The theorem of Ferrero-Washington \cite{FW79} implies that 
$\mu=0$ if $K/\mathbb Q$ is an abelian extension. 

\begin{remark}\label{rem:dvhll}
Suppose that there is a $\varLambda$-homomorphism $f : X \rightarrow Y$ of finitely generated $\varLambda$-modules with finite cokernel. 
Then $(p^n,T^n) \subset E_0(\mathrm{Coker}f)$ for sufficiently large $n$. 
By the basic properties of Fitting ideals (cf.\ e.g.\ \cite[Appendix]{MW84}), 
we have 
\[
(p^n,T^n) E_0(X) \subset (p^n,T^n) E_0(\mathrm{Im}f) \subset E_0(\mathrm{Coker}f)E_0(\mathrm{Im}f) \subset E_0(Y) , 
\]
and hence $\widetilde{E}_0(X) \subset \widetilde{E}_0(Y)$. 
Moreover if $\mathrm{Ker}f$ is also finite (i.e., $f$ is a pseudo-isomorphism), 
and if $X$ and $Y$ are $\varLambda$-torsion, 
then there is a pseudo-isomorphism $g : Y \rightarrow X$, 
and hence $\widetilde{E}_0(X)=\widetilde{E}_0(Y)$. 
In particular when $Y=\bigoplus_{i=1}^{m} \varLambda/\wp_i^{n_i}$ 
with some prime ideals $\wp_i$ of height $1$ and $n_i \ge 1$, 
we have $\widetilde{E}_0(X)=\prod_{i=1}^{m} \wp_i^{n_i}$. 
\end{remark}

Based on the analogy between $\widetilde{G}_{\emptyset}(k)$ and $\pi_1(\mathsf{X})$, 
we obtain the following another proof of Gold's theorem 
analogous to \cite[Theorem 3.2]{KM13}. 

\begin{theorem}[cf.\ \cite{Gol74}, also \cite{San93}]\label{thm:Gold}
Assume that $p \neq 2$ and $k$ is an imaginary quadratic field with class number $h_k \not\equiv 0 \pmod{p}$, 
and that $p$ splits in $k$ as $pO_k=\mathfrak{p}_1\mathfrak{p}_2$. 
Let $\varDelta(T)$ be the Iwasawa polynomial of $X=G_{\emptyset}(k^{\mathrm{cyc}})^{\mathrm{ab}}$. 
Then $\varDelta(T)=T$ (i.e., $\widetilde{G}_{\emptyset}(k) \simeq \mathbb Z_p^2$) 
if and only if 
$\mathrm{lk}(\mathfrak{p}_1,\mathfrak{p}_2) \not\equiv 0 \pmod{p}$. 
\end{theorem}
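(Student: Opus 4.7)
I apply Theorem~\ref{thm:presk} with $S=\emptyset$ and then perform a pro-$p$ Fox-calculus computation on the resulting Koch-type presentation of $\widetilde{G}_\emptyset(k)$. The hypotheses (imaginary quadratic $k$, $p$ odd with $p\nmid h_k$, $k\neq\mathbb{Q}(\sqrt{-3})$ if $p=3$) are exactly those of Theorem~\ref{thm:presk}, so $\widetilde{G}_\emptyset(k)=F/R$ with $F=\langle x_1,x_2\rangle$ free pro-$p$ of rank two and $R$ normally generated by $r_i=[x_i^{-1},y_i^{-1}]$ for $i=1,2$. By Remark~\ref{rem:lkimag} and $\mathrm{lk}(v,v)=0$, the common value $\ell:=\mathrm{lk}(\mathfrak{p}_1,\mathfrak{p}_2)=\mathrm{lk}(\mathfrak{p}_2,\mathfrak{p}_1)\in\mathbb{Z}_p$ satisfies $y_1\equiv x_2^{\ell}$ and $y_2\equiv x_1^{\ell}\pmod{[F,F]}$.

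Next I set up Fox calculus for the character $\alpha:F\to\varGamma$ obtained by composing $\pi$ with the restriction $\widetilde{G}_\emptyset(k)\twoheadrightarrow\varGamma$. By Remark~\ref{rem:gamma} and $h_k\in\mathbb{Z}_p^\times$, both $\alpha(x_i)$ coincide with a topological generator of $\varGamma$; after replacing $\gamma$ by $\gamma^{1/h_k}$ in the identification $\varLambda=\mathbb{Z}_p[[\varGamma]]$, I may take $\alpha(x_i)=1+T$. The pro-$p$ analog of the Crowell exact sequence (cf.\ \cite{Mor}) then presents the Iwasawa module $X=G_\emptyset(k^{\mathrm{cyc}})^{\mathrm{ab}}$ by the Alexander matrix $A_{ij}:=\alpha(\partial r_i/\partial x_j)$; the fundamental formula $\alpha(r_i)-1=\sum_j A_{ij}\,T$ together with $\alpha(r_i)=1$ forces each row to take the form $(a_i,-a_i)$, which after a short diagram chase yields $X\simeq\varLambda/(a_1,a_2)$.

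The key computation extracts the leading $T$-behavior of $a_i$: writing $y_i=x_{3-i}^{\ell}c_i$ with $c_i\in[F,F]$ and applying the Leibniz rule for Fox derivatives, I find $a_i=T\,b_i$ with
\[
b_1\equiv-\ell,\qquad b_2\equiv\ell\pmod{(p,T)},
\]
because for any $c\in[F,F]$ the augmentation of $\alpha(\partial c/\partial x_j)$ vanishes and hence the commutator-level corrections lie in the maximal ideal $(p,T)$ of $\varLambda$. Nakayama's lemma in the local ring $\varLambda$ then gives $(b_1,b_2)=\varLambda$, equivalently $(a_1,a_2)=(T)$ and $X\simeq\varLambda/(T)\simeq\mathbb{Z}_p$ with trivial $\varGamma$-action, precisely when $\ell\not\equiv 0\pmod p$. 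In that case $G_\emptyset(k^{\mathrm{cyc}})$ is pro-cyclic and $\widetilde{G}_\emptyset(k)\simeq G_\emptyset(k^{\mathrm{cyc}})\rtimes\varGamma$ splits as $\mathbb{Z}_p^2$, so $\varDelta(T)=T$; conversely, when $\ell\equiv 0\pmod p$ one gets $X\supsetneq\mathbb{Z}_p$, whence $\widetilde{G}_\emptyset(k)\not\simeq\mathbb{Z}_p^2$ and $\varDelta(T)\neq T$.

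The main obstacle is that Theorem~\ref{thm:presk} pins $y_i$ down only modulo $[F,F]$, so the Fox derivatives involve undetermined commutator corrections $c_i$; the decisive observation is that these corrections always contribute in the maximal ideal $(p,T)$, so the reduction of $(a_1,a_2)$ modulo $(p,T)$ --- which is all that Nakayama's lemma requires --- depends only on the mod $[F,F]$ data encoded by the single linking number $\ell$.
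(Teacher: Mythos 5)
Your route differs from the paper's: the paper stays entirely inside the lower central series of $F$ (it shows $RF_3=F_2^{\mathrm{lk}(\mathfrak{p}_1,\mathfrak{p}_2)}F_3$ and concludes from the equivalence $F_2=R\Leftrightarrow F_2=F_3R$), whereas you pass through the Crowell exact sequence and the Alexander matrix, i.e., the machinery the paper deploys only later for Theorem~\ref{thm:approx}. Your forward direction is correct: the identity $\sum_j A_{ij}T=0$, the identification $X\simeq\varLambda/(a_1,a_2)$, and the computation $a_i=Tb_i$ with $b_1\equiv-\ell$, $b_2\equiv\ell\pmod{(p,T)}$ all check out, and for $\ell\in\mathbb{Z}_p^{\times}$ they give $X\simeq\varLambda/(T)$, hence $\widetilde{G}_{\emptyset}(k)\simeq\mathbb{Z}_p^2$ and $\varDelta(T)=T$.

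The gap is in the converse. From $\ell\equiv 0\pmod{p}$ you get $(a_1,a_2)\subseteq (pT,T^2)$, which does show $X\not\simeq\mathbb{Z}_p$ and $\widetilde{G}_{\emptyset}(k)\not\simeq\mathbb{Z}_p^2$; but it does not by itself show $\varDelta(T)\neq T$. The Iwasawa polynomial generates the divisorial hull $\widetilde{E}_0(X)=\bigl(T\gcd(b_1,b_2)\bigr)$, and $\gcd(b_1,b_2)$ can perfectly well be a unit even when both $b_i$ lie in $(p,T)$ (compare $\gcd(p,T)=1$): the module $\varLambda/(pT,T^2)$ has $\varDelta(T)=T$ without being isomorphic to $\varLambda/(T)$. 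To close this --- and also to justify the ``i.e.''\ in the statement, namely that $\varDelta(T)=T$ is equivalent to $\widetilde{G}_{\emptyset}(k)\simeq\mathbb{Z}_p^2$ --- you must invoke the structural fact that $X$ has no nontrivial finite $\varLambda$-submodule, equivalently $X\simeq\mathbb{Z}_p^{\lambda}$ as a $\mathbb{Z}_p$-module; this is exactly where the paper cites \cite[Corollary 13.29]{Was}. With that input, $(a_1,a_2)$ is forced to be principal (otherwise $(\gcd)/(a_1,a_2)$ would be a nonzero finite submodule of $X$), and then $\varDelta(T)=T$ iff $(a_1,a_2)=(T)$ iff $(b_1,b_2)=\varLambda$, which your Nakayama step correctly ties to $\ell\not\equiv 0\pmod{p}$. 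A further cosmetic point: after replacing $\gamma$ by $\gamma^{1/h_k}$ you should remark that the condition $\varDelta(T)=T$ is unaffected, since the induced automorphism of $\varLambda$ preserves the ideal $(T)$.
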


\begin{proof}
Suppose $K=k$ and $\varSigma=\emptyset$. 
By Theorem \ref{thm:presk}, $\widetilde{G}_{\emptyset}(k)$ has a presentation 
\[
\entrymodifiers={+!!<0pt,\fontdimen22\textfont2>}
\xymatrix{
1 \ar[r] & R \ar[r] & F \ar[r]^-{\pi} & \widetilde{G}_{\emptyset}(k) \ar[r] & 1 
}
\]
with a free pro-$p$ group $F=\langle x_1,x_2 \rangle$ and the normal subgroup $R$ which is normally generated by $r_1, r_2$ of the form 
\[
r_1 \equiv r_2^{-1} \equiv [x_1^{-1},x_2^{-1}]^{\mathrm{lk}(\mathfrak{p}_1,\mathfrak{p}_2)} \mod{F_3} 
\]
(cf.\ also Remark \ref{rem:lkimag}). 
Then $RF_3=F_2^{\mathrm{lk}(\mathfrak{p}_1,\mathfrak{p}_2)}F_3$. 
In particular $R \subset F_2$, 
i.e., $\widetilde{G}_{\emptyset}(k)^{\mathrm{ab}} \simeq \mathbb Z_p^2$, 
and there is a surjective homomorphism $X \twoheadrightarrow \varLambda/T\varLambda$. 
Hence $\varDelta(T) \in T\varLambda$. 
It is well known that $X \simeq \mathbb Z_p^{\lambda}$ 
(cf.\ e.g.\ \cite[Corollary 13.29]{Was}). 
Therefore $\widetilde{G}_{\emptyset}(k) \simeq \mathbb Z_p^2$ if and only if $X \simeq \varLambda/T\varLambda$ (i.e., $\varDelta(T)=T$). 
Since $F_2=R$ if and only if $F_2=F_3R$, 
we obtain the claim. 
\end{proof}

\begin{remark}\label{rem:anlgA}
An analogue of Iwasawa module is a finitely generated $\mathbb Z[t^{\pm 1}]$-module $H_1(\mathsf{X}^{\mathrm{cyc}},\mathbb Z) \simeq \pi_1(\mathsf{X}^{\mathrm{cyc}})^{\mathrm{ab}}$, 
where $\mathsf{X}^{\mathrm{cyc}}$ is an infinite cyclic covering of $\mathsf{X}$ with $\Gamma=\mathrm{Gal}(\mathsf{X}^{\mathrm{cyc}}/\mathsf{X})=t^{\mathbb Z}$. 
Then the Alexander polynomial $\Delta(t) \in \mathbb Z[t^{\pm 1}]$ is defined as a generator of $\widetilde{E}_0(H_1(\mathsf{X}^{\mathrm{cyc}},\mathbb Z))=\Delta(t)\mathbb Z[t^{\pm 1}]$. 
Regarding $\Delta(1+T) \in \varLambda$, 
Iwasawa invariants $\lambda$, $\mu$ and $\nu$ for $\mathsf{X}^{\mathrm{cyc}}/\mathsf{X}$ are analogously defined and studied (cf.\ \cite{HMM,KM08,KM13,KM14,Mor02,Uek16,Uek} etc.). 
For $\mathsf{L}=\mathsf{K}_1 \cup \mathsf{K}_2 \subset \mathsf{M}=S^3$ (and $\mathsf{X}^{\mathrm{cyc}}$ such that the meridians of $\mathsf{K}_1$ and $\mathsf{K}_2$ are nontrivial in $\Gamma$), 
an analogue \cite[Theorem 3.2]{KM13} of Gold's theorem states that 
$\Delta(1+T)\varLambda=\varLambda T$ if and only if the linking number $\mathrm{lk}(\mathsf{K}_1,\mathsf{K}_2) \not\equiv 0 \pmod{p}$. 
\end{remark}

Alexander polynomials can be calculated from a presentation of $\pi_1(\mathsf{X})$ by Fox derivative (cf.\ \cite{Fox53,Mor} etc.). 
Analogously, in some special cases, 
one can calculate approximation of Iwasawa polynomials 
by pro-$p$ Fox derivative 
as in the following sections.

\subsection{Iwasawa module as a subquotient.} 
Suppose that $p=2$, $k=\mathbb Q$, $S^{\ast}=\{\ell_1,\cdots,\ell_d \} \subset S$ with $\ell_i \equiv 1 \pmod{2}$, 
and $\varSigma=S\setminus S^{\ast}$ is either 
$\{\infty\}$ or $\{q\}$ with $q \equiv 3 \pmod{4}$. 
Let $K=\mathbb Q(\sqrt{D}) \subset \mathbb Q_S$ be the quadratic field of discriminant 
\[
D=
\left\{
\begin{array}{ll}
\prod_{i=1}^d \ell_i^* & \hbox{if } \varSigma=\{\infty\}, \\
\prod_{i=1}^d \ell_i \quad \hbox{or} \quad q\prod_{i=1}^d \ell_i & \hbox{if } \varSigma=\{q\}, 
\end{array}
\right.
\]
where $\ell_i^* = (-1)^{\frac{\ell_i-1}{2}} \ell_i$. 
Let 
\[
\entrymodifiers={+!!<0pt,\fontdimen22\textfont2>}
\xymatrix{
1 \ar[r] & R \ar[r] & F \ar[r]^-{\pi} & \widetilde{G}_S(\mathbb Q) \ar[r] & 1
}
\]
be the presentation of $\widetilde{G}_S(\mathbb Q)$ obtained in Theorem \ref{thm:presQ}. 
Put the closed subgroup
\[
H=\langle x_0, x_d^{-1}x_0x_d, x_1x_d,\cdots,x_{d-1}x_d, x_1^2,\cdots,x_d^2 \rangle
\]
of $F$ generated by such $2d+1$ elements, where we ignore $x_1x_d,\cdots,x_{d-1}x_d$ if $d=1$. 
Recall that $\pi(x_i)$ generates the inertia group $T_i$ of $\widetilde{\ell}_i$ in $\widetilde{G}_S(\mathbb Q)$. 
Then $\mathrm{Gal}(K/\mathbb Q)=\langle \pi(x_i)|_K \rangle$ for any $1 \le i \le d$, and $\pi(x_0)|_K=1$. 
Hence $H$ is contained in the kernel of the surjective homomorphism
\[
\entrymodifiers={+!!<0pt,\fontdimen22\textfont2>}
\xymatrix{
|_K \circ \pi : F \ar[r]^-{\pi} & \widetilde{G}_S(\mathbb Q) \ar[r]^-{|_K} & \mathrm{Gal}(K/\mathbb Q) 
}.
\]
Since $x_i^{-1}x_0x_i=(x_i^{-1}x_d)(x_d^{-1}x_0x_d)(x_d^{-1}x_i)$ and 
\[
(x_ix_j^{-1})x_j^2=x_i^2(x_i^{-1}x_j)=x_ix_j=(x_ix_d)(x_jx_d)^{-1}x_j^2 \in H
\]
for $1 \le i,j \le d$, 
one can easily see that $H$ is a normal subgroup of $F$ such that 
$F/H = \langle x_iH \rangle \simeq \mathbb Z/2\mathbb Z$ for any $1 \le i \le d$. 
Then the maximal subgroup $H$ of $F$ is a free pro-$2$ group freely generated by the $2d+1$ elements (cf.\ e.g.\ \cite[Corollary (3.9.6)]{NSW}), 
and $R \subset F^2F_2 \subset H$. 
Since $|_K \circ \pi$ induces the isomorphism 
$(F/R)/(H/R) \simeq \mathrm{Gal}(K/\mathbb Q)$, 
$\pi$ induces a presentation 
\[
\entrymodifiers={+!!<0pt,\fontdimen22\textfont2>}
\xymatrix{
1 \ar[r] & R \ar[r] & H \ar[r]^-{\pi} & \widetilde{G}_S(K) \ar[r] & 1
}
\]
of $\widetilde{G}_S(K)=\mathrm{Gal}((\mathbb Q^{\textrm{cyc}})_S/K)$. 
The inertia group 
\[
T_i \cap \widetilde{G}_S(K) = \mathrm{Ker}(T_i \rightarrow \mathrm{Gal}(K/\mathbb Q) : \pi(x_i) \mapsto \pi(x_i)|_K)
\]
of $\widetilde{\ell}_i$ in $\widetilde{G}_S(K)$ is generated by $\pi(x_i^2)$. 
Hence $\pi$ also induces a presentation 
\[
\entrymodifiers={+!!<0pt,\fontdimen22\textfont2>}
\xymatrix{
1 \ar[r] & NR \ar[r] & H \ar[r]^-{\varpi} & \widetilde{G}_{\varSigma}(K) \ar[r] & 1
}
\]
of $\widetilde{G}_{\varSigma}(K)=\mathrm{Gal}((K^{\textrm{cyc}})_{\varSigma}/K)$, 
where $\varpi=|_{(K^{\textrm{cyc}})_{\varSigma}} \circ \pi$, 
and 
$N=\langle x_1^2,\cdots,x_d^2 \rangle_H$
is the normal subgroup of $H$ 
normally generated by $x_1^2,\cdots,x_d^2$. 
Note that $g^{-1}x_i^2g=(x_i^{-1}g)^{-1}x_i^2(x_i^{-1}g) \in N$ for any $g \in F=H \cup x_iH$ and $1 \le i \le d$. 
Then $N$ is also a normal subgroup of $F$, and hence a presentation 
\[
\entrymodifiers={+!!<0pt,\fontdimen22\textfont2>}
\xymatrix{
1 \ar[r] & NR \ar[r] & F \ar[r]^-{\varpi} & \mathrm{Gal}((K^{\textrm{cyc}})_{\varSigma}/\mathbb Q) \ar[r] & 1
}
\]
of $\mathrm{Gal}((K^{\textrm{cyc}})_{\varSigma}/\mathbb Q)$ is also induced. 
Recall that $R$ is generated by $r_0,r_1,\cdots,r_d$ as a normal subgroup of $F$, 
and put 
\[
\rho_i = [x_i^{-1},y_i^{-1}] \equiv r_i \mod{N}
\]
for $0 \le i \le d$. 
Since 
\[
g^{-1}\rho_ig = (x_i^{-1}g)^{-1}(y_ix_i^2y_i^{-1})^{-1} \rho_i^{-1} x_i^2 (x_i^{-1}g) 
\equiv (x_i^{-1}g)^{-1} \rho_i^{-1} (x_i^{-1}g) \mod{N}
\]
for any $g \in F=H \cup x_iH$ and $1 \le i \le d$, 
\[
NR=\langle x_1^2,\cdots,x_d^2, \rho_0,x_d^{-1}\rho_0x_d,\rho_1,\cdots,\rho_d \rangle_H
\]
is normally generated by such $2d+2$ elements in $H$. 
Put $\overline{F}=F/N$, and put $\overline{g}=gN \in \overline{F}$ for any $g \in F$. 
The universality of the free pro-$2$ group implies that 
\[
\overline{H}=H/N=\langle w_{01},w_{02},w_1,\cdots,w_{d-1} \rangle
\]
is a free pro-$2$ group of rank $d+1$, where 
\[
w_{01}=\overline{x_0}, \quad
w_{02}=\overline{x_d^{-1}x_0x_d}, \quad
w_i=\overline{x_ix_d} 
\]
for $1 \le i \le d-1$. 
We ignore $w_i$ if $d=1$. 
Then $\varpi$ induces a presentation
\[
\entrymodifiers={+!!<0pt,\fontdimen22\textfont2>}
\xymatrix{
1 \ar[r] & \overline{R} \ar[r] & \overline{H} \ar[r]^-{\overline{\varpi}} & \widetilde{G}_{\varSigma}(K) \ar[r] & 1
}
\]
of $\widetilde{G}_{\varSigma}(K)$ and an exact sequence 
\[
\entrymodifiers={+!!<0pt,\fontdimen22\textfont2>}
\xymatrix{
1 \ar[r] & \overline{R} \ar[r] & \overline{F} \ar[r]^-{\overline{\varpi}} & \mathrm{Gal}((K^{\textrm{cyc}})_{\varSigma}/\mathbb Q) \ar[r] & 1
},
\]
where 
\[
\overline{R}=NR/N=\langle \overline{\rho_0},\overline{x_d^{-1}\rho_0x_d},\overline{\rho_1},\cdots,\overline{\rho_d} \rangle_{\overline{H}}
\]
and $\overline{\varpi}(\overline{g})=\varpi(g)=\pi(g)|_{(K^{\textrm{cyc}})_{\varSigma}}$ for $g \in F$. 
Recall that we can put $\widetilde{\gamma}=\pi(x_0)$ (cf.\ Remark \ref{rem:gamma}). 
Then $\varGamma=\mathrm{Gal}(K^{\textrm{cyc}}/K)=\langle \gamma \rangle$ 
with $\gamma=\widetilde{\gamma}|_{K^{\textrm{cyc}}}$. 
Let $\tau$ be the generator of $\mathrm{Gal}(K^{\mathrm{cyc}}/\mathbb Q^{\mathrm{cyc}}) \simeq \mathbb Z/2\mathbb Z$. 
Then $\mathrm{Gal}(K^{\mathrm{cyc}}/\mathbb Q)=\langle \tau, \gamma \rangle \simeq \mathbb Z/2\mathbb Z \oplus \mathbb Z_p$, 
and $\pi(x_i)|_{K^{\mathrm{cyc}}}=\tau$ for all $1 \le i \le d$. 
Since $\overline{\varpi}(w_{01})|_{K^{\mathrm{cyc}}}=\overline{\varpi}(w_{02})|_{K^{\mathrm{cyc}}}=\gamma$ and $\overline{\varpi}(w_i)|_{K^{\mathrm{cyc}}}=1$, 
the kernel of 
\[
\entrymodifiers={+!!<0pt,\fontdimen22\textfont2>}
\xymatrix{
|_{K^{\mathrm{cyc}}} \circ \overline{\varpi} : \overline{H} \ar[r]^-{\overline{\varpi}} & \widetilde{G}_{\varSigma}(K) \ar[r]^-{|_{K^{\mathrm{cyc}}}} & \varGamma 
}
\]
is 
\[
\overline{M}=\langle w_{01}w_{02}^{-1},w_1,\cdots,w_{d-1} \rangle_{\overline{H}}. 
\]
Then $\overline{H}/\overline{M}=\langle w_{01}\overline{M} \rangle \simeq \mathbb Z_2$ and $\overline{\varpi}(\overline{M})=G_{\varSigma}(K^{\mathrm{cyc}})$. 
By defining the action of $\gamma$ on $m \in \overline{M}$ as 
\[
{^{\gamma}m}=w_{01} m w_{01}^{-1} , 
\] 
$\overline{M}$ is a free pro-$2$-$\varGamma$ operator group of rank $d$, 
and $\overline{M}/\overline{M}_2 \simeq \varLambda^d$ is a free $\varLambda$-module of rank $d$. 
Since $\overline{F}/\overline{H}=\langle \overline{x_d} \overline{H} \rangle$ and 
\begin{align*}
\overline{x_d}(w_{01}w_{02}^{-1})\overline{x_d^{-1}} &= (w_{01}w_{02}^{-1})^{-1} = [\overline{x_d},w_{01}^{-1}], \\
\overline{x_d} w_i \overline{x_d^{-1}} &= w_i^{-1}
\end{align*}
for all $1 \le i \le d-1$, 
$\overline{M}$ is a normal subgroup of $\overline{F}$, 
and $\overline{\varpi}$ induces the isomorphism 
\[
\overline{F}/\overline{M}=\langle \overline{x_d} \overline{M}, w_{01}\overline{M} \rangle \simeq \mathrm{Gal}(K^{\mathrm{cyc}}/\mathbb Q)=\langle \tau, \gamma \rangle : \overline{x_d} \overline{M} \mapsto \tau, w_{01}\overline{M} \mapsto \gamma .
\]
In particular, we have $\overline{R} \subset \overline{F}_2 \subset \overline{M}$. 
Note that 
\[
\overline{x_d}(\overline{h} m \overline{h^{-1}})\overline{x_d^{-1}}
=[\overline{x_d^{-1}},\overline{h^{-1}}]
\overline{h} (\overline{x_d} m \overline{x_d^{-1}}) \overline{h^{-1}}
[\overline{h^{-1}},\overline{x_d^{-1}}]
\equiv \overline{h} (\overline{x_d} m \overline{x_d^{-1}}) \overline{h^{-1}}
\mod{\overline{M}_2} 
\]
for any $m \in \overline{M}$ and $h \in H$. 
By defining the action of $\tau$ on $\overline{M}/\overline{M}_2$ as 
\begin{align}\label{approx:tau}
{^{\tau}(m\overline{M}_2)}=\overline{x_d} m \overline{x_d^{-1}} \overline{M}_2 = (m\overline{M}_2)^{-1}
\end{align}
for $m \in \overline{M}$, 
we regard $\overline{M}/\overline{M}_2$ as a $\mathbb Z_2[[\mathrm{Gal}(K^{\mathrm{cyc}}/\mathbb Q)]]$-module, 
particularly as a $\varLambda$-module. 
Since $\overline{M}_2\overline{R}/\overline{M}_2$ is a $\mathbb Z_2[[\mathrm{Gal}(K^{\mathrm{cyc}}/\mathbb Q)]]$-submodule of $\overline{M}/\overline{M}_2$, 
$\overline{\varpi}$ induces an isomorphism 
\[
\overline{M}/\overline{M}_2\overline{R} \simeq G_{\varSigma}(K^{\mathrm{cyc}})^{\mathrm{ab}}
\]
as $\mathbb Z_2[[\mathrm{Gal}(K^{\mathrm{cyc}}/\mathbb Q)]]$-modules, 
particularly as $\varLambda$-modules. 
Since 
\[
\overline{x_d^{-1}\rho_0x_d}\,\overline{M}_2 = {^{\tau}(\overline{\rho_0}\overline{M}_2)} =(\overline{\rho_0}\overline{M}_2)^{-1}, 
\]
this isomorphism induces a presentation 
\[
\entrymodifiers={+!!<0pt,\fontdimen22\textfont2>}
\xymatrix{
\varLambda^{d+1} \ar[r] \ar@{>>}[d] & \varLambda^d \ar[r] & G_{\varSigma}(K^{\mathrm{cyc}})^{\mathrm{ab}} \ar[r] & 0 \\
\overline{M}_2\overline{R}/\overline{M}_2 \ar[r] & \overline{M}/\overline{M}_2 \ar[u]^-{\simeq} & & \\
}
\]
of the Iwasawa module $X=G_{\varSigma}(K^{\mathrm{cyc}})^{\mathrm{ab}}$, 
which implies that the Iwasawa polynomial is computable in principle from the presentation of $\widetilde{G}_S(\mathbb Q)$. 
We obtain the following theorem concerning an approximate computation of the initial Fitting ideal. 

\begin{theorem}\label{thm:approx}
Under the settings above, 
let $E_0(X)$ be the initial Fitting ideal of the $\varLambda$-module $X=G_{\varSigma}(K^{\mathrm{cyc}})^{\mathrm{ab}}$, 
and put 
\[
\varepsilon=
\left\{
\begin{array}{ll}
1 & \hbox{if } D \equiv 1 \pmod{8}, \\
0 & \hbox{if } D \equiv 5 \pmod{8}. 
\end{array}
\right.
\]
Suppose that 
\[
\overline{\rho_i}=\overline{[x_i^{-1},y_i^{-1}]} \equiv \overline{\rho_{i,n}} 
\mod{\mathfrak{M}^{(n)}}
\]
with some $\overline{\rho_{i,n}} \in \overline{H}$ for each $0 \le i \le d$ and $n \ge 3$, 
where 
\[
\mathfrak{M}^{(n)}={\textstyle \overline{M}^{2^{n-1-\varepsilon}} \overline{M}_2 \underset{i=2}{\stackrel{n}{\prod}} \overline{F}_{i}^{2^{n-i}}}. 
\]
Then the ideal $E_0(X)+(2,T)^{n-1-\varepsilon}$ of $\varLambda$ is generated by 
$(2,T)^{n-1-\varepsilon}$ 
and the $d \times d$ minors of the $d \times (d+1)$ matrix 
\[
Q_n=\left(
\varPhi
\Big(\frac{\partial \overline{\rho_{i,n}}}{\partial w}\Big)
\right)_{w,i}
\]
with rows and columns indexed by $w \in \{w_{01},w_1,\cdots,w_{d-1}\}$ and $0 \le i \le d$, 
where 
\[
\entrymodifiers={+!!<0pt,\fontdimen22\textfont2>}
\xymatrix{
\varPhi : \mathbb Z_2[[\overline{H}]] \ar[r] & \mathbb Z_2[[\varGamma]] \ar[r]^-{\simeq} & \varLambda
}
\]
is the $\mathbb Z_2$-linear ring homomorphism naturally extended from $|_{K^{\mathrm{cyc}}} \circ \overline{\varpi} : \overline{H} \rightarrow \varGamma$. 
\end{theorem}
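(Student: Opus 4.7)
The plan is to make the $\varLambda$-presentation $\varLambda^{d+1}\to\varLambda^d\to X\to 0$ (established just before the theorem) explicit via pro-$2$ Fox calculus on the free pro-$2$ group $\overline{H}$, and then to show that truncating each $\overline{\rho_i}$ modulo $\mathfrak{M}^{(n)}$ perturbs the presentation matrix only by entries in $(2,T)^{n-1-\varepsilon}$.

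For the first step I would invoke the Crowell short exact sequence attached to $1\to\overline{M}\to\overline{H}\to\varGamma\to 1$,
\[
0\to\overline{M}/\overline{M}_2\xrightarrow{\Delta}\varLambda^{d+1}\xrightarrow{\epsilon}T\varLambda\to 0,
\]
where $\varLambda^{d+1}$ is indexed by the free generators $\{w_{01},w_{02},w_1,\ldots,w_{d-1}\}$, with $\Delta(\overline{m})=(\varPhi(\partial m/\partial w))_w$ and $\epsilon(e_w)=\varPhi(w)-1$. Since $\varPhi(w_{01})=\varPhi(w_{02})=1+T$ while $\varPhi(w_i)=1$ for $i\geq 1$, the kernel of $\epsilon$ is the free $\varLambda$-module of rank $d$ on $e_{w_{01}}-e_{w_{02}},e_{w_1},\ldots,e_{w_{d-1}}$; under the resulting identification $\overline{M}/\overline{M}_2\simeq\varLambda^d$, an element $\overline{m}$ is represented by $(\varPhi(\partial m/\partial w_{01}),\varPhi(\partial m/\partial w_1),\ldots,\varPhi(\partial m/\partial w_{d-1}))$. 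Because $\overline{x_d^{-1}\rho_0 x_d}\cdot\overline{M}_2=\overline{\rho_0}^{-1}\cdot\overline{M}_2$ by (\ref{approx:tau}), the classes of $\overline{\rho_0},\ldots,\overline{\rho_d}$ generate the image of $\varLambda^{d+1}\to\varLambda^d$, so $E_0(X)$ is generated by the $d\times d$ minors of $Q=(\varPhi(\partial\overline{\rho_i}/\partial w))_{w,i}$.

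Writing $\overline{\rho_i}=\overline{u_i}\cdot\overline{\rho_{i,n}}$ with $\overline{u_i}\in\mathfrak{M}^{(n)}\cap\overline{H}$, the Fox-derivative product rule composed with $\varPhi$ gives
\[
\varPhi(\partial\overline{\rho_i}/\partial w)=\varPhi(\partial\overline{u_i}/\partial w)+\varPhi(\overline{u_i})\cdot\varPhi(\partial\overline{\rho_{i,n}}/\partial w).
\]
Hence the theorem reduces to the key lemma: for every $\overline{u}\in\mathfrak{M}^{(n)}\cap\overline{H}$ and every free generator $w$, one has $\varPhi(\overline{u})=1$ and $\varPhi(\partial\overline{u}/\partial w)\in(2,T)^{n-1-\varepsilon}$. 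Granted this, the columns of $Q$ and $Q_n$ differ by vectors in $(2,T)^{n-1-\varepsilon}\varLambda^d$, and hence their $d\times d$ minors agree modulo $(2,T)^{n-1-\varepsilon}$. Both conditions are preserved under multiplication and $\overline{H}$-conjugation (a short computation using the Fox-derivative inversion and conjugation formulas combined with $\varPhi(\overline{u})=1$), so it is enough to verify them on the generators of the three factors of $\mathfrak{M}^{(n)}$. The identity $\varPhi(\overline{u})=1$ is clear on $\overline{M}$ and, since $\overline{F}_i\subset\overline{F}_2\subset\ker(\overline{\varpi}|_{K^{\mathrm{cyc}}})$ for $i\geq 2$, also on $\overline{F}_i\cap\overline{H}$. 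The Fox-derivative bound is direct for the first two generator types: for $\overline{u}=m^{2^{n-1-\varepsilon}}$ with $m\in\overline{M}$ the geometric-series formula and $\varPhi(m)=1$ give $\varPhi(\partial\overline{u}/\partial w)=2^{n-1-\varepsilon}\varPhi(\partial m/\partial w)\in(2)^{n-1-\varepsilon}$, and for $\overline{u}=[m_1,m_2]\in\overline{M}_2$ the commutator formula together with $\varPhi(m_j)=1$ makes $\varPhi(\partial\overline{u}/\partial w)$ vanish identically.

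The main obstacle is the third type, $\overline{u}=f^{2^{n-i}}$ with $f\in\overline{F}_i$ and $2\leq i\leq n$, where $f$ itself need not lie in $\overline{H}$ although $\overline{u}$ does (since $[\overline{F}:\overline{H}]=2$). Here I plan to combine the universal inclusion $\overline{F}_i-1\subset I_{\overline{F}}^i$ with the $2$-adic binomial expansion $f^{2^{n-i}}-1=\sum_{j\geq 1}\binom{2^{n-i}}{j}(f-1)^j$, whose $j$-th term has $2$-adic weight $v_2\binom{2^{n-i}}{j}+ij\geq n$ for every $j\geq 1$ when $i\geq 2$; this places $\overline{u}-1$ in the $n$-th power of the maximal ideal of $\mathbb{Z}_2[[\overline{F}]]$, which after translating through the Schreier decomposition $\mathbb{Z}_2[[\overline{F}]]=\mathbb{Z}_2[[\overline{H}]]\oplus\overline{x_d}\cdot\mathbb{Z}_2[[\overline{H}]]$ and applying $\varPhi$ yields the desired bound on $\varPhi(\partial\overline{u}/\partial w)$. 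The parameter $\varepsilon$ absorbs the loss coming from the splitting behaviour of $2$ in $K/\mathbb{Q}$, with the extra factor of $2$ being available precisely when $D\equiv 1\pmod{8}$.
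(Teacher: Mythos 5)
Your opening step---identifying $E_0(X)$ with the ideal of $d\times d$ minors of the exact Fox--Crowell matrix $Q=(\varPhi(\partial\overline{\rho_i}/\partial w))_{w,i}$ and reducing the theorem to a congruence between $Q$ and $Q_n$---is sound and parallels the paper, and your key lemma is correct when $\varepsilon=1$ (it is then essentially the paper's inclusion $\mathfrak{M}^{(n)}\subset\overline{M}^{(n-1)}\overline{M}_2$ of (\ref{approx:modin})). The gap is that the key lemma is \emph{false} when $D\equiv 5\pmod 8$, i.e.\ $\varepsilon=0$, which is precisely the case where the theorem claims the stronger congruence modulo $(2,T)^{n-1}$. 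Under the Crowell identification $\overline{M}/\overline{M}_2\simeq\varLambda^d$, the condition ``$\varPhi(\partial\overline{u}/\partial w)\in(2,T)^{n-1}$ for all $w$'' is equivalent to $\overline{u}\in\overline{M}^{(n)}\overline{M}_2$ by (\ref{approx:M}); but $\mathfrak{M}^{(n)}$ contains $\overline{F}_2^{\,2^{n-2}}$, and $\overline{F}_2\not\subset\overline{M}^{(2)}\overline{M}_2$. Indeed $w_{01}w_{02}^{-1}=\overline{[x_0^{-1},x_d]}$ lies in $\overline{F}_2$ yet is a free generator of the pro-$2$-$\varGamma$ operator group $\overline{M}$, so $\overline{u}=(w_{01}w_{02}^{-1})^{2^{n-2}}$ satisfies $\varPhi(\partial\overline{u}/\partial w_{01})=2^{n-2}\notin(2,T)^{n-1}$ (elements of $(2,T)^{n-1}$ have constant term divisible by $2^{n-1}$). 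This also locates the flaw in your binomial/weight argument for the third generator type: the translation ``through the Schreier decomposition'' fails because an element of $I_{\overline{F}}$-adic weight $i\ge 2$ may have $I_{\overline{H}}$-adic weight only $1$ (same witness), so a bound in $(2,I_{\overline{F}})^{n}$ gives no bound in $(2,I_{\overline{H}})^{n}$. The entrywise congruence $Q\equiv Q_n\pmod{(2,T)^{n-1}}$ simply does not hold in general. (Your reading of $\varepsilon$ is also inverted: the easy case is $D\equiv1\pmod 8$, where the conclusion is \emph{weaker}; nothing extra is ``available'' there.)

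The theorem survives for $\varepsilon=0$ because the two systems of relations need only be compared modulo the relation subgroup, not inside $\overline{M}/\overline{M}_2$ itself: the paper's Lemma \ref{lem:approx} proves $\overline{M}^{(n)}\overline{M}_2\overline{R}=\overline{M}^{(n)}\overline{M}_2\overline{R}^{(n)}$, whence the module isomorphism $X/(2,T)^{n-1}X\simeq X^{(n)}/(2,T)^{n-1}X^{(n)}$ of (\ref{approx:X}) and the equality of Fitting ideals modulo $(2,T)^{n-1}$, with no entrywise control of the matrices. The missing power of $(2,T)$ comes from an arithmetic input invisible to any purely combinatorial Fox-calculus estimate: when $D\equiv 5\pmod 8$ the prime $2$ is non-split in $K$ and $G_{\varSigma}(\mathbb Q)^{\mathrm{ab}}$ is trivial, from which one deduces that the maximal elementary abelian $2$-extension of $K$ unramified outside $\varSigma$ is abelian over $\mathbb Q$, i.e.\ $\overline{F}_2\subset\overline{M}^{(2)}\overline{M}_2\overline{R}$. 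You would need to add this step, and rerun your comparison modulo $\overline{R}$ rather than modulo $\overline{M}_2$ alone, to close the argument; your treatment of the factors $\overline{M}^{2^{n-1-\varepsilon}}$ and $\overline{M}_2$, and the whole case $D\equiv 1\pmod 8$, are otherwise fine.
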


\subsection{Proof of Theorem \ref{thm:approx}.} 
Put $\overline{M}^{(1)}=\overline{M} \supset \overline{F}_2$, and put $\overline{M}^{(n)}=[\overline{M}^{(n-1)},\overline{F}] \supset \overline{F}_{n+1}$ for $n \ge 2$ recursively. 
Arbitrary $\overline{g} \in \overline{F}$ can be written in the form 
$\overline{g}=w_{01}^z \overline{x_d}^e m'$ 
with some $z \in \mathbb Z_2$, $e \in \{0,1\}$ and $m' \in \overline{M}$. 
Then, since 
\[
[m,\overline{g}] \equiv m^{-2e}(({^{\gamma^{-z}} m})m^{-1})^{(-1)^e} \mod{\overline{M}_2} 
\]
for any $m \in \overline{M}$, 
we have 
\begin{align}\label{approx:M}
\overline{M}^{(n)}\overline{M}_2/\overline{M}_2
= (2,T)^{n-1} (\overline{M}/\overline{M}_2) 
\end{align}
for any $n \ge 1$ by induction. 
Then $\overline{F}_{i}^{2^{n-i}} \subset (\overline{M}^{(i-1)})^{2^{n-i}} \subset \overline{M}^{(n-1)}\overline{M}_2$ for any $n \ge i \ge 2$, 
and hence 
\begin{align}\label{approx:modin}
\mathfrak{M}^{(n)} \subset \overline{M}^{(n-1)}\overline{M}_2 \subset \overline{M}^{(2)}\overline{M}_2
\end{align}
for $n \ge 3$. 
In particular, we have  
\[
\overline{M}^{(2)}\overline{M}_2\overline{R}=\overline{M}^{(2)}\overline{M}_2\overline{R}^{(n)}, 
\]
where we put 
\[
\overline{R}^{(n)}=\langle \overline{\rho_{0,n}},\overline{x_d^{-1}\rho_{0,n}x_d},\overline{\rho_{1,n}},\cdots,\overline{\rho_{d,n}} \rangle_{\overline{H}} 
\subset \overline{M}. 
\]

\begin{lemma}\label{lem:approx}
$\overline{M}^{(n-\varepsilon)}\overline{M}_2\overline{R} 
= \overline{M}^{(n-\varepsilon)}\overline{M}_2\overline{R}^{(n)}$ 
for any $n \ge 3$. 
\end{lemma}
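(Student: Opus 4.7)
My plan is to reduce the asserted equality to a single inclusion $\mathfrak M^{(n)}\subset\overline M^{(n-\varepsilon)}\overline M_2$, exploiting the normality of $\mathfrak M^{(n)}$ in $\overline F$, and then verify that inclusion in each of the two cases $\varepsilon=1$ and $\varepsilon=0$.

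The first step is to check that $\mathfrak M^{(n)}$ is a normal subgroup of $\overline F$: the subgroups $\overline M$ and $\overline M_2$ are $\overline F$-normal (the first by construction of $\overline F=F/N$, the second as the commutator of the normal subgroup $\overline M$), each $\overline F_i$ is characteristic in $\overline F$, and pro-$2$-power subgroups and products of $\overline F$-normal subgroups inherit $\overline F$-normality. Given $\overline\rho_i=\overline{\rho_{i,n}}m_i$ with $m_i\in\mathfrak M^{(n)}$, conjugation by any $\overline h\in\overline H$ keeps $\overline h m_i\overline h^{-1}\in\mathfrak M^{(n)}$, and the same applies to the $\overline{x_d}$-conjugate generator $\overline{x_d^{-1}\rho_0 x_d}$. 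Taking products of $\overline H$-conjugates of the defining generators yields the symmetric pair of inclusions
\[
\overline R\subset\overline R^{(n)}\mathfrak M^{(n)},\qquad\overline R^{(n)}\subset\overline R\,\mathfrak M^{(n)}.
\]
Consequently it suffices to show $\mathfrak M^{(n)}\subset\overline M^{(n-\varepsilon)}\overline M_2$.

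For $\varepsilon=1$, where $n-\varepsilon=n-1$, the required inclusion is precisely (\ref{approx:modin}), and this case is finished.

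The main obstacle is the case $\varepsilon=0$, where $n-\varepsilon=n$ and (\ref{approx:modin}) is one factor of $(2,T)$ too weak. To recover the missing power, I will use the relation $\overline{x_d}^2=1$ in $\overline F$ (a consequence of $x_d^2\in N$), which by (\ref{approx:tau}) forces $\tau=\overline\varpi(\overline{x_d})$ to act as $-1$ on $\overline M/\overline M_2$. Consequently $[m,\overline{x_d}]\equiv-2m\pmod{\overline M_2}$ for every $m\in\overline M$, contributing a factor $2\in(2,T)$; the commutator $[m,w_{01}]\equiv -T(1+T)^{-1}m\pmod{\overline M_2}$ contributes a factor in $T\varLambda$; and $[m,w_j]\in\overline M_2$ for $1\le j\le d-1$ since $w_j\in\overline M$. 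Under the identification $\overline M/\overline M_2\simeq\varLambda^d$ with $\overline M^{(m)}\overline M_2/\overline M_2=(2,T)^{m-1}\varLambda^d$ from (\ref{approx:M}), a systematic analysis of iterated commutators of length $i$ together with the $2^{n-i}$-th pro-$2$-power shows that $\overline F_i^{2^{n-i}}\overline M_2/\overline M_2\subset(2,T)^{n-1}\varLambda^d$. Combined with the trivial estimate $\overline M^{2^{n-1}}\overline M_2/\overline M_2=2^{n-1}\varLambda^d\subset(2,T)^{n-1}\varLambda^d$, this yields $\mathfrak M^{(n)}\subset\overline M^{(n)}\overline M_2$ and completes the proof.
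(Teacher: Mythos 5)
Your reduction of the lemma to the single inclusion $\mathfrak{M}^{(n)}\subset\overline{M}^{(n-\varepsilon)}\overline{M}_2$ (via normality of $\mathfrak{M}^{(n)}$ in $\overline{F}$, so that $\mathfrak{M}^{(n)}\overline{R}=\mathfrak{M}^{(n)}\overline{R}^{(n)}$) is sound, and the case $\varepsilon=1$ is indeed just (\ref{approx:modin}). But the case $\varepsilon=0$ contains a fatal gap: the inclusion $\mathfrak{M}^{(n)}\subset\overline{M}^{(n)}\overline{M}_2$ that you set out to prove is false. Your commutator estimates ($[m,\overline{x_d}]\equiv -2m$, $[m,w_{01}]\equiv -T(1+T)^{-1}m$ modulo $\overline{M}_2$) only control commutators one of whose entries already lies in $\overline{M}$; they account for the correct bound $\overline{F}_i\subset\overline{M}^{(i-1)}$, not $\overline{F}_i\subset\overline{M}^{(i)}\overline{M}_2$. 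The obstruction appears already at $i=2$: the commutator $\overline{[x_0,x_d]}=w_{01}^{-1}w_{02}\in\overline{F}_2$ has neither entry in $\overline{M}$, and its image in $\overline{M}/\overline{M}_2\simeq\varLambda^d$ is $-(1+T)^{-1}$ times the basis vector corresponding to $w_{01}w_{02}^{-1}$, a unit multiple of a generator. Hence $(w_{01}^{-1}w_{02})^{2^{n-2}}\in\overline{F}_2^{2^{n-2}}\subset\mathfrak{M}^{(n)}$ maps to a unit times $2^{n-2}$ times a basis vector, which does not lie in $(2,T)^{n-1}\varLambda^d=\overline{M}^{(n)}\overline{M}_2/\overline{M}_2$. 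So no purely group-theoretic "systematic analysis of iterated commutators" can close this case; the statement being proved is genuinely one factor of $(2,T)$ stronger than what the free structure of $\overline{F}$ allows.

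The paper circumvents this with an arithmetic input specific to $D\equiv 5\pmod 8$: it does not prove $\mathfrak{M}^{(n)}\subset\overline{M}^{(n)}\overline{M}_2$, but the weaker $\mathfrak{M}^{(n)}\subset\overline{M}^{(n)}\overline{M}_2\overline{R}\cap\overline{M}^{(n)}\overline{M}_2\overline{R}^{(n)}$, which still suffices for the sandwich argument. The key step is showing $\overline{F}_2\subset\overline{M}^{(2)}\overline{M}_2\overline{R}$, i.e., that the troublesome elements of $\overline{F}_2$ are absorbed by the relation subgroup. This is deduced from class field theory: since $G_{\varSigma}(\mathbb Q)^{\mathrm{ab}}\simeq\{1\}$, the element $\tau$ acts as inversion on $G_{\varSigma}(K)^{\mathrm{ab}}$, hence trivially modulo $2$, so the maximal elementary abelian $2$-extension of $K$ unramified outside $\varSigma$ is abelian over $\mathbb Q$ and therefore $\mathrm{Gal}(L/\mathbb Q)$ is abelian. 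You would need to supply this (or an equivalent) arithmetic argument; without it the $\varepsilon=0$ case does not go through.
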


\begin{proof}
If $D \equiv 1 \pmod{4}$, we obtain the claim by (\ref{approx:modin}). 
Suppose that $D \equiv 5 \pmod{8}$. 
Let $L$ be the fixed field of $(2,T)X$. 
Then $L/K^{\mathrm{cyc}}$ is the maximal elementary abelian $2$-extension 
unramified outside $\varSigma$ which is abelian over $K$. 
Since 
\[
\overline{M}/\overline{M}^{(2)}\overline{M}_2\overline{R}
\simeq X/(2,T)X \simeq \mathrm{Gal}(L/K^{\mathrm{cyc}})
\]
as $\mathbb Z_2[[\mathrm{Gal}(K^{\mathrm{cyc}})/\mathbb Q]]$-modules by (\ref{approx:M}), 
we have 
\[
\overline{F}/\overline{M}^{(2)}\overline{M}_2\overline{R}
\simeq \mathrm{Gal}(L/\mathbb Q). 
\]
Let $L'$ be the inertia field in $L/K$ of the unique prime of $K$ lying over $2$. 
Since $L/L'$ and $K^{\mathrm{cyc}}/K$ are totally ramified, 
we have $L=L'K^{\mathrm{cyc}}$ and $L' \cap K^{\mathrm{cyc}} =K$. 
Hence $L'$ is an elementary abelian $2$-extension of $K$ unramified outside $\varSigma$. 
Since $G_{\varSigma}(\mathbb Q)^{\mathrm{ab}} \simeq \{1\}$, 
$1+\tau$ annihilates the Sylow $2$-subgroup of the ray class group of $K$ modulo $v \in \varSigma$, 
i.e., 
$\tau|_K \in \mathrm{Gal}(K/\mathbb Q)$ acts as inverse on $G_{\varSigma}(K)^{\mathrm{ab}}$. 
The trivial action of $\mathrm{Gal}(K/\mathbb Q)$ on $G_{\varSigma}(K)^{\mathrm{ab}}/2$ implies that 
the maximal elementary abelian $2$-extension of $K$ unramified outside $\varSigma$ is an abelian extension over $\mathbb Q$. 
Therefore $L=L' \mathbb Q^{\mathrm{cyc}}$ is abelian over $\mathbb Q$, i.e., 
\[
\overline{F}_2 \subset \overline{M}^{(2)}\overline{M}_2\overline{R}=\overline{M}^{(2)}\overline{M}_2\overline{R}^{(n)}. 
\]
Since $\overline{M}_2\overline{R}^{(n)}$ is also a normal subgroup of $\overline{F}$ by (\ref{approx:tau}), 
we have 
\[
\overline{F}_i \subset \overline{M}^{(i)}\overline{M}_2\overline{R} \cap \overline{M}^{(i)}\overline{M}_2\overline{R}^{(n)}
\]
for any $i \ge 2$. 
Since 
$(\overline{M}^{(i)})^{2^{n-i}} \subset \overline{M}^{(n)}\overline{M}_2$ for any $i \ge 1$ by (\ref{approx:M}), 
we have 
\[
\mathfrak{M}^{(n)} \subset \overline{M}^{(n)}\overline{M}_2\overline{R} \cap \overline{M}^{(n)}\overline{M}_2\overline{R}^{(n)}, 
\]
and hence 
\begin{align*}
\overline{M}^{(n)}\overline{M}_2\overline{R} 
= \overline{M}^{(n)}\overline{M}_2 (\mathfrak{M}^{(n)} \overline{R})
= \overline{M}^{(n)}\overline{M}_2 (\mathfrak{M}^{(n)} \overline{R}^{(n)})
= \overline{M}^{(n)}\overline{M}_2\overline{R}^{(n)}. 
\end{align*}
Thus Lemma \ref{lem:approx} is proved. 
\end{proof}

Lemma \ref{lem:approx} above and (\ref{approx:M}) yield that 
\begin{align}\label{approx:X}
X/(2,T)^{n-1-\varepsilon}X \simeq \overline{M}/\overline{M}^{(n-\varepsilon)}\overline{M}_2\overline{R} \simeq X^{(n)}/(2,T)^{n-1-\varepsilon}X^{(n)} , 
\end{align}
where 
\[
X^{(n)}=\overline{M}/\overline{M}_2\overline{R}^{(n)} \simeq 
\mathrm{Ker}(\overline{H}/\overline{R}^{(n)} \stackrel{\psi_n}{\longrightarrow} \varGamma)^{\mathrm{ab}}. 
\]
Since $\pi_n : \overline{H} \rightarrow \overline{H}/\overline{R}^{(n)}$ satisfies $\psi_n \circ \pi_n = |_{K^{\mathrm{cyc}}} \circ \overline{\varpi}$, 
there is a presentation 
\[
\entrymodifiers={+!!<0pt,\fontdimen22\textfont2>}
\xymatrix{
\varLambda^{d+2} \ar[r]^-{\widetilde{Q}_n} & \varLambda^{d+1} \ar[r] & \mathfrak{A}_{\psi_n} \ar[r] & 0 
}
\]
of the complete $\psi_n$-differential module $\mathfrak{A}_{\psi_n}$ (cf.\ \cite[Corollary 9.15]{Mor}) 
with the $(d+1) \times (d+2)$ matrix $\widetilde{Q}_n$ 
obtained by adding to $Q_n$ a row with $w=w_{02}$ and 
a column 
\[
\left(
\varPhi
\Big(\frac{\partial \overline{x_d^{-1}\rho_{0,n}x_d}}{\partial w}\Big)
\right)_{w}
=\left(
-\varPhi
\Big(\frac{\partial \overline{\rho_{0,n}}}{\partial w}\Big)
\right)_{w} 
\]
which is the inverse of the column of $i=0$
by (\ref{approx:tau}) and the following lemma. 

\begin{lemma}\label{lem:linearFox}
For any $w \in \{w_{01},w_{02},w_1,\cdots,w_{d-1}\}$, 
we have 
\[
\varPhi
\Big(\frac{\partial m_1^{z_1} m_2^{z_2}}{\partial w}\Big)
=
z_1\varPhi
\Big(\frac{\partial m_1}{\partial w}\Big)
+
z_2\varPhi
\Big(\frac{\partial m_2}{\partial w}\Big)
\]
for any $z_1,z_2 \in \mathbb Z_2$ and $m_1,m_2 \in \overline{M}$. 
In particular, 
\[
\varPhi \Big(\frac{\partial m}{\partial w}\Big)
=0
\]
if $m \in \overline{M}_2$. 
\end{lemma}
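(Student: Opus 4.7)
The plan is to reduce everything to the standard Leibniz rule for Fox derivatives together with the single observation that $\varPhi(m)=1$ for every $m\in\overline{M}$. Indeed, by construction $\overline{M}=\ker(|_{K^{\mathrm{cyc}}}\circ\overline{\varpi})$, and $\varPhi$ is the continuous $\mathbb Z_2$-algebra extension of this homomorphism under the identification $\mathbb Z_2[[\varGamma]]\simeq\varLambda$, so $\varPhi$ sends every element of $\overline{M}$ to the multiplicative identity $1\in\varLambda$. This single fact collapses the usual $\varPhi(f)\cdot\varPhi(\partial g/\partial w)$ terms arising from the Leibniz rule to plain addition.

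First I would verify the product rule in the restricted form required. The general Fox identity $\partial(fg)/\partial w=\partial f/\partial w+f\,\partial g/\partial w$ applied to $f=m_1$, $g=m_2$ with $m_1,m_2\in\overline{M}$ and followed by $\varPhi$ gives
\[
\varPhi\Big(\tfrac{\partial m_1 m_2}{\partial w}\Big)=\varPhi\Big(\tfrac{\partial m_1}{\partial w}\Big)+\varPhi(m_1)\,\varPhi\Big(\tfrac{\partial m_2}{\partial w}\Big)=\varPhi\Big(\tfrac{\partial m_1}{\partial w}\Big)+\varPhi\Big(\tfrac{\partial m_2}{\partial w}\Big),
\]
since $\varPhi(m_1)=1$. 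Next, for integer $z\ge 1$ I would prove $\varPhi(\partial m^z/\partial w)=z\,\varPhi(\partial m/\partial w)$ by induction on $z$ using the same identity, and extend to $z\le -1$ via $\partial m^{-1}/\partial w=-m^{-1}\,\partial m/\partial w$, again absorbed by $\varPhi(m^{-1})=1$. To pass from $z\in\mathbb Z$ to $z\in\mathbb Z_2$, I would invoke the continuity of both the pro-$2$ Fox derivative $\partial/\partial w:\mathbb Z_2[[\overline{H}]]\to\mathbb Z_2[[\overline{H}]]$ and the map $\varPhi$, together with the continuity of the power map $z\mapsto m^z$ on $\overline{H}$; then the two sides agree on the dense subset $\mathbb Z\subset\mathbb Z_2$ and hence everywhere. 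Combining the power rule with the product rule yields the displayed $\mathbb Z_2$-linearity.

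For the second assertion, I would first handle a single commutator $[m_1,m_2]=m_1^{-1}m_2^{-1}m_1 m_2$ with $m_i\in\overline{M}$ by expanding the Fox derivative explicitly:
\[
\tfrac{\partial[m_1,m_2]}{\partial w}=-m_1^{-1}\tfrac{\partial m_1}{\partial w}-m_1^{-1}m_2^{-1}\tfrac{\partial m_2}{\partial w}+m_1^{-1}m_2^{-1}\tfrac{\partial m_1}{\partial w}+m_1^{-1}m_2^{-1}m_1\tfrac{\partial m_2}{\partial w}.
\]
Applying $\varPhi$ and using $\varPhi(m_i^{\pm 1})=1$ makes the four terms cancel in pairs, giving $\varPhi(\partial[m_1,m_2]/\partial w)=0$. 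For a general $m\in\overline{M}_2=\overline{[\overline{M},\overline{M}]}$, I would write $m$ as a (topological) limit of finite products of commutators $[m_{1,j},m_{2,j}]^{\pm 1}$ with $m_{i,j}\in\overline{M}$, apply the already established $\mathbb Z_2$-linearity of $\varPhi\circ(\partial/\partial w)$ restricted to $\overline{M}$ to reduce to the commutator case, and conclude by continuity.

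The only real obstacle is the continuity step underlying the extension from $\mathbb Z$ to $\mathbb Z_2$-exponents and from finite products of commutators to all of $\overline{M}_2$; both reduce to the fact that $\partial/\partial w$ is continuous on $\mathbb Z_2[[\overline{H}]]$ and that the closed subgroup generated by commutators is the closure of the finitely generated commutator subgroup, so this is routine rather than genuinely difficult. Everything else is a mechanical application of the Leibniz rule combined with the vanishing observation $\varPhi(\overline{M})=\{1\}$.
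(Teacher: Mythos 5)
Your proof is correct and follows essentially the same route as the paper's: the paper's own argument is just the observation that $\overline{M}=\mathrm{Ker}(|_{K^{\mathrm{cyc}}}\circ\overline{\varpi})$ forces $\varPhi(m)=1$ for $m\in\overline{M}$, combined with the Leibniz rule for the pro-$p$ Fox derivative and the continuity of $\varPhi\circ\frac{\partial}{\partial w}$, which is exactly what you spell out in detail. Your explicit commutator computation and the density/continuity arguments are the routine verifications the paper leaves to the cited reference, so there is nothing to add.
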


\begin{proof}
Recall that $\overline{M}$ is the kernel of $|_{K^{\mathrm{cyc}}} \circ \overline{\varpi}$, i.e., $\overline{\varpi}(m_i)|_{K^{\mathrm{cyc}}}=1$. 
Hence the claim holds 
by the basic properties (cf.\ e.g.\ \cite[Proposition 8.13]{Mor}) of the pro-$p$ Fox derivative $\frac{\partial}{\partial w}$ and the continuity of $\varPhi \circ \frac{\partial}{\partial w}$. 
\end{proof}

The involution $\tau$ on $\overline{H}$ defined by $\overline{h}^{\tau}=\overline{x_d^{-1}} \overline{h} \overline{x_d}$ satisfies 
$(w_{01})^{\tau}=w_{02}$ and $(w_i)^{\tau}=w_i^{-1}$, 
in particular, 
$\overline{\varpi}(\overline{h}^{\tau})|_{K^{\mathrm{cyc}}}=\overline{\varpi}(\overline{h})|_{K^{\mathrm{cyc}}}$ for any $\overline{h} \in \overline{H}$. 
By the chain rule of the Fox derivative (cf.\ \cite[(2.6)]{Fox53}), 
we have 
\[
\frac{\partial \overline{\rho_{i,n}}}{\partial w_{02}}
=\frac{\partial (\overline{\rho_{i,n}}^{\tau})^{\tau}}{\partial w_{02}}
=\sum_{w} 
\Big(\frac{\partial \overline{\rho_{i,n}}^{\tau}}{\partial w}\Big)^{\!\tau}
\frac{\partial w^{\tau}}{\partial w_{02}}
=
\Big(\frac{\partial \overline{\rho_{i,n}}^{\tau}}{\partial w_{01}}\Big)^{\!\tau} ,
\]
and hence (\ref{approx:tau}) and Lemma \ref{lem:linearFox} yield that 
\[
\varPhi \Big(\frac{\partial \overline{\rho_{i,n}}}{\partial w_{02}}\Big)
=
-\varPhi \Big(\frac{\partial \overline{\rho_{i,n}}}{\partial w_{01}}\Big)
\]
for any $0 \le i \le d$, 
i.e., the row of $w=w_{02}$ 
is the inverse of the row of $w=w_{01}$ in $\widetilde{Q}_n$. 
Since the Crowell exact sequence (cf.\ \cite{NQD} or \cite[Theorem 9.17]{Mor})
\[
\entrymodifiers={+!!<0pt,\fontdimen22\textfont2>}
\xymatrix{
0 \ar[r] & X^{(n)} \ar[r] & \mathfrak{A}_{\psi_n} \ar[r] & \varLambda \ar[r]^-{\varepsilon_{\mathbb Z_2[[\varGamma]]}} & \mathbb Z_2 \ar[r] & 0
}
\]
yields that $\mathfrak{A}_{\psi_n} \simeq X^{(n)} \oplus \varLambda$, 
the Fitting ideal $E_0(X^{(n)})=E_1(\mathfrak{A}_{\psi_n})$ of the $\varLambda$-module $X^{(n)}$ is generated by $d \times d$ minors of $\widetilde{Q}_n$, i.e., of $Q_n$ (cf.\ \cite[Example 9.18]{Mor}). 
Since 
\[
E_0(X)+(2,T)^{n-1-\varepsilon}=E_0(X^{(n)})+(2,T)^{n-1-\varepsilon}
\]
by (\ref{approx:X}) (cf.\ \cite[Appendix]{MW84}), we obtain the claim of Theorem \ref{thm:approx}.

\subsection{Application of Theorem \ref{thm:approx}.} 
In the following, we assume that $\varSigma=\{\infty\}$. 
Recall that $\rho_i=[x_i^{-1},y_i^{-1}]$. 
By Theorem \ref{thm:presQ} and (\ref{def:ciab}), 
\[
y_i \equiv x_d^{c_{id}} \cdots x_1^{c_{i1}} x_0^{c_{i0}}
\underset{a < b}{\textstyle{\prod}} [x_a,x_b]^{c_{iab}} \mod{F_3N}
\]
with $c_{i1},\cdots,c_{id} \in \{0,1\}$ and $c_{iab} \in \mathbb Z_2$ 
for $0 \le i \le d$, 
where 
\begin{align*}
c_{i0} &= \mathrm{lk}(\ell_i,\ell_0), \\
c_{ij} &\equiv \mathrm{lk}(\ell_i,\ell_j) \mod{2} 
\end{align*}
for $1 \le j \le d$. 
Then we have 
\begin{align}\label{eqv:rho}
\rho_i &\equiv \ 
\underset{j=0}{\stackrel{d}{\textstyle{\prod}}} [x_i^{-1},x_j^{-1}]^{c_{ij}} 
\cdot [x_i,x_0,x_0]^{c_{i0}(c_{i0}-1)/2} \cdot 
\underset{a < b}{\textstyle{\prod}} 
[x_i,x_a,x_b]^{c_{ia}c_{ib}} [x_a,x_b,x_i]^{c_{iab}} 
\\
& \bmod{F_3^2 F_4(F_2)_2N} \nonumber
\end{align}
by \cite[Propositions (3.8.3) and (3.8.6)]{NSW} and the multilinearity of brackets 
\[
\entrymodifiers={+!!<0pt,\fontdimen22\textfont2>}
\xymatrix{
[\phantom{x},\phantom{x},\phantom{x}] : F/F_2 \otimes F/F_2 \otimes F/F_2 \ar[r] & F_2/F_3 \otimes F/F_2 \ar[r]^-{[\phantom{x},\phantom{x}]} & F_3/F_4 . 
}
\]
For convenience, we put $w_d=\overline{x_d x_d}=1$. 
Then, for any $j \neq 0$ and $b \neq 0$, 
\[
\overline{[x_i^{-1},x_j^{-1}]} = 
\left\{
\begin{array}{ll}
w_{01} w_j w_{02}^{-1} w_j^{-1} & \hbox{if }i=0, \\
(w_iw_j^{-1})^2 & \hbox{if }i \neq 0, 
\end{array}
\right.
\]
\[
\overline{[x_i,x_j,x_0]} = 
\left\{
\begin{array}{ll}
{[w_j w_{02}w_j^{-1},w_{01}]} & \hbox{ if }i=0,\\
{[(w_iw_j^{-1})^2,w_{01}]} & \hbox{ if }i \neq 0,\\
\end{array}
\right.
\]
and 
\[
\overline{[x_i,x_j,x_b]} \equiv 
\left\{
\begin{array}{lll}
(w_{02}^{-1}w_j^{-1}w_{01}w_j)^2 & \mod{\overline{M}_2} & \hbox{ if }i=0,\\
(w_iw_j^{-1})^{-4} & \mod{\overline{M}_2} & \hbox{ if }i \neq 0. \\
\end{array}
\right.
\]
By (\ref{eqv:rho}), we have 
\begin{align}\label{eqv:rhobar}
\overline{\rho_i} \equiv \overline{\rho_{i,4}} \mod{\overline{F}_3^2 \overline{F}_4 \overline{M}_2 \subset \mathfrak{M}^{(4)}}
\end{align}
with
\begin{align*}
\overline{\rho_{0,4}} =
&\underset{j=1}{\stackrel{d}{\textstyle{\prod}}} 
(w_{01}w_jw_{02}^{-1}w_j^{-1})^{c_{0j}} 
{[w_j w_{02} w_j^{-1}, w_{01}]^{c_{00j}}} 
\\
& \cdot \underset{0<a<b}{\textstyle{\prod}} 
(w_{02}^{-1}w_a^{-1}w_{01}w_a)^{2c_{0a}c_{0b}}
{[(w_aw_b^{-1})^2,w_{01}]^{c_{0ab}}} 
\end{align*}
and 
\begin{align*}
\overline{\rho_{i,4}} =
&\underset{j=1}{\stackrel{d}{\textstyle{\prod}}}
(w_iw_j^{-1})^{2c_{ij}} 
(w_{02}^{-1}w_i^{-1}w_{01}w_i)^{2c_{i0}c_{ij}}
(w_{02}^{-1}w_j^{-1}w_{01}w_j)^{2c_{i0j}}
\\
& \cdot 
(w_{01}w_iw_{02}^{-1}w_i^{-1})^{-c_{i0}} 
{[w_iw_{02}w_i^{-1},w_{01}]^{c_{i0}(c_{i0}-1)/2}}
\\
& \cdot \underset{0<a<b}{\textstyle{\prod}} 
(w_aw_i^{-1})^{-4c_{ia}c_{ib}}
(w_aw_b^{-1})^{-4c_{iab}}
\end{align*}
for $1 \le i \le d$. 
Since 
\begin{align*}
\varPhi \Big(\frac{\partial (w_{01}w_iw_{02}^{-1}w_i^{-1})}{\partial w}\Big)
&\equiv \delta_{w,w_{01}}-\delta_{w,w_{02}}+T\delta_{w,w_i} , 
\\
\varPhi \Big(\frac{\partial [w_i w_{02} w_i^{-1}, w_{01}]}{\partial w}\Big)
&\equiv T \delta_{w,w_{01}}-T \delta_{w,w_{02}}+T^2 \delta_{w,w_i} , 
\\
\varPhi \Big(\frac{\partial (w_{02}^{-1}w_i^{-1}w_{01}w_i)^2}{\partial w}\Big)
&\equiv 2(T+1)\delta_{w,w_{01}}-2(T+1)\delta_{w,w_{02}}+2T \delta_{w,w_i} , 
\\
\varPhi \Big(\frac{\partial [(w_aw_b^{-1})^2,w_{01}]}{\partial w}\Big)
&\equiv 
2T \delta_{w,w_a}
+ 2T \delta_{w,w_b} 
\end{align*}
modulo $(2,T)^3$ for $1 \le i,a,b \le d$, 
a routine calculation using Lemma \ref{lem:linearFox} shows that 
\begin{align}\label{eqv:mod^3}
\varPhi \Big(\frac{\partial \overline{\rho_{i,4}}}{\partial w}\Big)
\equiv q_{w,i}
\mod{(2,T)^3}
\end{align}
with $q_{w,i} \in \varLambda$ such that 
\[
q_{w,0}=
\left\{
\begin{array}{ll}
\underset{j=1}{\stackrel{d}{\textstyle{\sum}}} (c_{0j}+c_{00j}T)
+\underset{0<a<b}{\textstyle{\sum}} 2c_{0a}c_{0b}(T+1) 
& \hbox{if } w=w_{01}, \\
c_{0m}T+c_{00m}T^2
+\underset{b=m+1}{\stackrel{d}{\textstyle{\sum}}} 2(c_{0m}c_{0b}+c_{0mb})T
+\underset{a=1}{\stackrel{m-1}{\textstyle{\sum}}} 2c_{0am}T 
& \hbox{if } w=w_m, 
\end{array}
\right.
\]
and 
\[
q_{w,i}=
\left\{
\begin{array}{ll}
\underset{j=1}{\stackrel{d}{\textstyle{\sum}}} 2(c_{i0}c_{ij}+c_{i0j})(T+1)
-c_{i0}+\frac{c_{i0}(c_{i0}-1)}{2}T
& \hbox{if } w=w_{01}, \\
\underset{j=1}{\stackrel{d}{\textstyle{\sum}}} 2c_{ij}(1+c_{i0}T)
+2c_{i0i}T-c_{i0}T+\frac{c_{i0}(c_{i0}-1)}{2}T^2 & \\
\quad +\underset{0<a<b}{\textstyle{\sum}} 4c_{ia}c_{ib}
+\underset{b=i+1}{\stackrel{d}{\textstyle{\sum}}} 4c_{iib}
+\underset{a=1}{\stackrel{i-1}{\textstyle{\sum}}} 4c_{iai}
& \hbox{if } w=w_i, \\
-2c_{im}+2c_{i0m}T
+\underset{b=m+1}{\stackrel{d}{\textstyle{\sum}}} 4(c_{im}c_{ib}+c_{imb})
+\underset{a=1}{\stackrel{m-1}{\textstyle{\sum}}} 4c_{iam}
& \hbox{if } w=w_m \neq w_i, 
\end{array}
\right.
\]
for $1 \le i \le d$ and $1 \le m \le d-1$. 
Then (\ref{eqv:rhobar}), (\ref{eqv:mod^3}) and Theorem \ref{thm:approx} for $n=4$ yield that 
$E_0(X)+(2,T)^{3-\varepsilon}$ is generated by 
$(2,T)^{3-\varepsilon}$ and $d \times d$ minors of 
the $d \times (d+1)$ matrix $(q_{w,i})_{w,i}$. 
In particular when $d=2$, 
the $2 \times 2$ minors 
\[
\varDelta_0(T)= 
\left|
\begin{array}{cc}
q_{w_{01},0} & q_{w_{01},1} \\
q_{w_1,0} & q_{w_1,1} 
\end{array}
\right|, 
\quad
\varDelta_1(T)= 
\left|
\begin{array}{cc}
q_{w_{01},1} & q_{w_{01},2} \\
q_{w_1,1} & q_{w_1,2} 
\end{array}
\right|, 
\quad
\varDelta_2(T)= 
\left|
\begin{array}{cc}
q_{w_{01},2} & q_{w_{01},0} \\
q_{w_1,2} & q_{w_1,0} 
\end{array}
\right|
\]
of the $2 \times 3$ matrix $(q_{w,i})_{w,i}$ 
satisfy the following congruences modulo $(2,T)^3$; 
\begin{align*}
\varDelta_0(T) \equiv 
&\ 
\big( c_{10}c_{002}+c_{02}{\textstyle \frac{c_{10}(c_{10}-1)}{2}} \big)T^2 
-c_{10}c_{02}T
+2c_{12}(c_{01}+c_{02})
\\
&+ 2\big( c_{12}(c_{10}c_{02}+c_{001}+c_{002})+c_{01}c_{102}+c_{02}c_{101}+c_{10}c_{012} \big)T \\
&+4\big( c_{12}c_{01}c_{02}+c_{112}(c_{01}+c_{02}) \big) , 
\\
\varDelta_1(T) \equiv 
&\ 
\big( c_{10}{\textstyle \frac{c_{20}(c_{20}-1)}{2}} + c_{20}{\textstyle \frac{c_{10}(c_{10}-1)}{2}} \big)T^2 
-c_{10}c_{20}T
+2(c_{12}c_{20}+c_{21}c_{10})
\\
&+2\big( 
c_{12}{\textstyle \frac{c_{20}(c_{20}-1)}{2}} + c_{21}{\textstyle \frac{c_{10}(c_{10}-1)}{2}}
+c_{10}c_{20}(c_{12}+c_{21})+c_{10}c_{202}+c_{20}c_{101} 
\big)T \\
&+4\big( 
c_{12}c_{21}(c_{10}+c_{20})
+c_{21}(c_{101}+c_{102})+c_{12}(c_{201}+c_{202})
+c_{10}c_{212}+c_{20}c_{112}
\big) ,
\\
\varDelta_2(T) \equiv 
&\ 
\big( c_{20}c_{001}+c_{01}{\textstyle \frac{c_{20}(c_{20}-1)}{2}} \big)T^2 
-c_{01}c_{20}T
+2c_{21}(c_{01}+c_{02})
\\
&+2\big( 
c_{21}(c_{01}c_{20}+c_{001}+c_{002})+c_{01}c_{202}+c_{02}c_{201}+c_{20}c_{012}+c_{01}c_{20}c_{02} 
 \big)T \\
&+4\big( 
c_{21}c_{01}c_{02}+c_{212}(c_{01}+c_{02})
\big) . 
\end{align*}
Then we obtain the following theorems. 

\begin{theorem}\label{thm:d=2:imag}
Assume that $k=\mathbb Q$, $p=2$ and $S=\{\ell_1,\ell_2,\infty\}$. 
Let $\varDelta(T)$ be the Iwasawa polynomial of 
$X=G_{\emptyset}(K^{\mathrm{cyc}})^{\mathrm{ab}}$
for an imaginary quadratic field $K=\mathbb Q(\sqrt{-\ell_1\ell_2})$, 
and let $c_{012}$, $c_{201}$ be $2$-adic integers defined by (\ref{def:ciab}). 
\begin{enumerate}

\item\label{d=2:93}
If $\ell_1 \equiv 9 \pmod{16}$, $\ell_2 \equiv 3 \pmod{8}$ and $\big(\frac{\ell_1}{\ell_2}\big)=1$, then 
\[
\varDelta(T) \equiv {\textstyle T^2 + \big( 1+\big(\frac{2}{\ell_1}\big)_4 \big)T + 2\big( 1-\big(\frac{\ell_2}{\ell_1}\big)_4 \big)} \mod{4\mathbb Z_2 T+8\mathbb Z_2} 
\]
and $c_{012}+c_{201} \equiv \frac{1}{2}\big(1+\big(\frac{2}{\ell_1}\big)_4\big) \pmod{2}$. 

\item\label{d=2:75}
If $\ell_1 \equiv 7 \pmod{16}$, $\ell_2 \equiv 5 \pmod{8}$ and $\big(\frac{\ell_1}{\ell_2}\big)=1$, then 
\[
\varDelta(T) \equiv {\textstyle T^2 + 2\big( 1-\big(\frac{-\ell_1}{\ell_2}\big)_4 \big)} \mod{4\mathbb Z_2 T+8\mathbb Z_2}
\]
and $c_{012} \equiv c_{201} \pmod{2}$. 

\end{enumerate}
\end{theorem}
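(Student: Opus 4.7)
The strategy is to apply Theorem~\ref{thm:approx} with $n=4$ to compute the initial Fitting ideal $E_0(X)$ of $X=G_{\{\infty\}}(K^{\mathrm{cyc}})^{\mathrm{ab}}$ modulo $(2,T)^3$ and then to read off $\varDelta(T)\bmod(4T+8)$. In both cases~(i) and~(ii) a direct check shows $-\ell_1\ell_2\equiv 5\pmod 8$, so that $D=\ell_1^*\ell_2^*\equiv 5\pmod 8$ and $\varepsilon=0$; hence Theorem~\ref{thm:approx} yields that $E_0(X)+(2,T)^3$ is generated by $(2,T)^3$ together with the three $2\times 2$ minors $\varDelta_0(T),\varDelta_1(T),\varDelta_2(T)$ of $Q_4$ computed explicitly modulo $(2,T)^3$ just before the theorem statement. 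Since $\mu=0$ by Ferrero--Washington (because $K/\mathbb Q$ is abelian) and the $T^2$-coefficient of the minor $\varDelta_1(T)$ will turn out to be odd, we will deduce $\lambda=2$; the remaining task is to match the $T$-coefficient modulo~$4$ and the constant term modulo~$8$.

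\emph{Coefficient bookkeeping.} From Example~\ref{exam:kvQp} together with $\ell_1\equiv 9\pmod{16}$ one finds $c_{10}=\mathrm{lk}(\ell_1,2)\equiv 2\pmod 4$ in both cases, and from $\ell_2\pmod 8$ the integer $c_{20}=\mathrm{lk}(\ell_2,2)$ is odd. The Legendre symbol values $\bigl(\tfrac{2}{\ell_1}\bigr)=1$ and $\bigl(\tfrac{2}{\ell_2}\bigr)=-1$ yield $c_{01}\equiv 0$ and $c_{02}\equiv 1\pmod 2$, and quadratic reciprocity applied to $\bigl(\tfrac{\ell_1}{\ell_2}\bigr)=1$ gives $c_{12}\equiv c_{21}\equiv 0\pmod 2$. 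For the triple indices, Propositions~\ref{Redei:ciab} and~\ref{Redei:4th} identify the parity of $c_{112}$ with $\bigl(\tfrac{-\ell_2}{\ell_1}\bigr)_4$ in case~(i) (which moreover equals $\bigl(\tfrac{\ell_2}{\ell_1}\bigr)_4$, since $\bigl(\tfrac{-1}{\ell_1}\bigr)_4=(-1)^{(\ell_1-1)/4}=1$ when $\ell_1\equiv 9\pmod{16}$) and with $\bigl(\tfrac{-\ell_1}{\ell_2}\bigr)_4$ in case~(ii), and gives $c_{212}\equiv c_{112}\pmod 2$ in both cases. In case~(i) the R\'edei symbol $[2,\ell_1,2]$ is defined and evaluates to $\bigl(\tfrac{\ell_1}{2}\bigr)_4=(-1)^{(\ell_1-1)/8}=-1$, whence $c_{001}\equiv 1\pmod 2$.

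\emph{Evaluation.} Substituting these residues into the explicit expressions for $\varDelta_0,\varDelta_1,\varDelta_2\pmod{(2,T)^3}$ collapses nearly every summand: each product containing an ``undetermined'' $c_{iab}$ is accompanied by a factor $c_{ij}$ or $c_{i0}$ that is sufficiently divisible by~$2$ to push the whole term into $(2,T)^3$. What remains matches, up to a $\varLambda$-unit, a distinguished quadratic $T^2+aT+b$ with $a\equiv 1+\bigl(\tfrac{2}{\ell_1}\bigr)_4\pmod 4$ and $b\equiv 2\bigl(1-\bigl(\tfrac{\ell_2}{\ell_1}\bigr)_4\bigr)\pmod 8$ in case~(i), and $a\equiv 0\pmod 4$, $b\equiv 2\bigl(1-\bigl(\tfrac{-\ell_1}{\ell_2}\bigr)_4\bigr)\pmod 8$ in case~(ii). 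Comparing the $T$-coefficient of the surviving minor $\varDelta_1(T)$ with the predicted value of $a$ forces the supplementary congruences $c_{012}+c_{201}\equiv\tfrac{1}{2}\bigl(1+\bigl(\tfrac{2}{\ell_1}\bigr)_4\bigr)\pmod 2$ in case~(i) and $c_{012}\equiv c_{201}\pmod 2$ in case~(ii), these being the only Milnor invariants in the minor whose parity is not already pinned down by Proposition~\ref{Redei:ciab}.

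\emph{Main obstacle.} The crux is controlling the triple coefficients $c_{iab}$ that do not satisfy the evenness hypotheses~(\ref{eqv:abba})--(\ref{eqv:iaib}) of Proposition~\ref{Redei:ciab} and therefore cannot be directly identified with R\'edei symbols; one has to check by hand that every such $c_{iab}$ enters the three minors only in terms whose $2$-adic valuation is already high enough to land in~$(2,T)^3$. A secondary but essential point is that a priori $E_0(X)$ may be strictly smaller than the divisorial hull $\widetilde E_0(X)=(\varDelta(T))$; however, once the $T^2$-coefficient of $\varDelta_1$ modulo~$2$ is seen to equal~$1$, the minor $\varDelta_1$ is an associate of a distinguished polynomial of degree~$2$ modulo~$(2,T)^3$, the ideal generated by the three minors is therefore principal modulo~$(2,T)^3$, and $\varDelta(T)\bmod(4T+8)$ can be read off directly.
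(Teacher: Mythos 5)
Your overall strategy matches the paper's: apply Theorem \ref{thm:approx} with $n=4$ (indeed $D\equiv 5\pmod 8$ in both cases, so $\varepsilon=0$), compute the three $2\times2$ minors $\varDelta_0,\varDelta_1,\varDelta_2$ modulo $(2,T)^3$, and convert the surviving coefficients into quartic residue symbols via Propositions \ref{Redei:ciab} and \ref{Redei:4th}; your coefficient bookkeeping is essentially correct. The gap is in the passage from the minors to $\varDelta(T)$. Theorem \ref{thm:approx} only determines $E_0(X)+(2,T)^3$, and this does not determine the divisorial hull $\widetilde E_0(X)=(\varDelta(T))$ unless one already knows that $E_0(X)$ is principal and generated by a distinguished polynomial of degree $2$. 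Your proposed shortcut --- $\mu=0$ by Ferrero--Washington plus the oddness of the $T^2$-coefficient of $\varDelta_1$, hence $\lambda=2$ and ``$\varDelta$ can be read off'' --- only yields $\lambda\le 2$. For instance, if $\lambda=1$ and $X$ had a finite submodule, $E_0(X)$ could be a non-principal ideal such as $(\varDelta)\cdot(2,T)$ with $\varDelta=T$, whose reduction modulo $(2,T)^3$ still contains elements congruent to $T^2+2aT$; the minor computation does not exclude this. The paper closes exactly this gap by importing from \cite{Fer80,Kid79} the fact that $X\simeq\mathbb Z_2^2$ as a $\mathbb Z_2$-module for these fields, whence $E_0(X)=(\varDelta(T))$ is principal with $\varDelta(T)\equiv T^2\pmod 2$; only then does $\varDelta_i(T)\equiv u_i(T)\varDelta(T)\pmod{(2,T)^3}$ with $u_i(0)\in\mathbb Z_2^{\times}$ follow, giving $\varDelta\equiv\varDelta_i\pmod{(2,T)^3}$ for every $i$.

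The same missing input undermines your derivation of the supplementary congruences. The coefficients $c_{012}$ and $c_{201}$ occur only in $\varDelta_2(T)\equiv T^2+2(c_{012}+c_{201})T+4c_{212}$, not in $\varDelta_1$, so they cannot be extracted from ``the surviving minor $\varDelta_1$'' alone. In the paper, the congruence $c_{012}+c_{201}\equiv c_{101}+1\pmod 2$ is precisely the statement that $\varDelta_2\equiv\varDelta_0\equiv\varDelta_1\pmod{(2,T)^3}$, i.e., that all three minors are unit multiples of the \emph{same} generator $\varDelta$ --- which again rests on the principality of $E_0(X)$. So the structural fact $X\simeq\mathbb Z_2^2$ is not an optional simplification but the hinge of both conclusions of the theorem, and your proof needs to cite it (or an equivalent lower bound on $X$) explicitly.
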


\begin{proof}
Assume that $\big(\frac{\ell_1}{\ell_2}\big)=1$ and 
either $\ell_1 \equiv 9 \pmod{16}$ and $\ell_2 \equiv 3 \pmod{8}$ 
or $\ell_1 \equiv 7 \pmod{16}$ and $\ell_2 \equiv 5 \pmod{8}$. 
Then, since $X \simeq \mathbb Z_2^2$ as a $\mathbb Z_2$-module (cf.\ \cite{Fer80,Kid79}), 
$E_0(X)$ is a principal ideal of $\varLambda$ generated by 
$\varDelta(T) \equiv T^2 \pmod{2}$ 
(cf.\ \cite[p.299, Example (3)]{Was}). 
By (\ref{eqv:rhobar}), (\ref{eqv:mod^3}) and Theorem \ref{thm:approx}, 
the ideal $E_0(X)+(2,T)^3$ is generated by $(2,T)^3$ and 
$\varDelta_0(T),\varDelta_1(T),\varDelta_2(T)$. 
Hence 
\[
\varDelta_i(T) \equiv u_i(T)\varDelta(T) \mod{(2,T)^3}
\]
with some $u_i(T) \in \varLambda^{\times}$ for each $i \in \{0,1,2\}$. 
By the assumption, we have 
$c_{01}=0$, $c_{02}=1$, $c_{12}=c_{21}=0$, $c_{10} \equiv 2 \pmod{4}$, $c_{20} \equiv 1 \pmod{2}$. 
Then $\frac{c_{10}(c_{10}-1)}{2} \equiv 1 \pmod{2}$. 
Moreover, $(-1)^{c_{001}}=\big(\frac{\ell_1^{\ast}}{2}\big)_4=-1$ 
by Propositions \ref{Redei:ciab} and \ref{Redei:4th}. 
Hence 
\begin{align*}
\varDelta_0(T) &\equiv \varDelta_1(T) \equiv T^2+2(c_{101}+1)T+4c_{112} \mod{(2,T)^3}, \\
\varDelta_2(T) &\equiv T^2+2(c_{012}+c_{201})T+4c_{212} \mod{(2,T)^3}. 
\end{align*}
In particular, $\varDelta_i(T) \in (2,T)^2$ for all $i \in \{0,1,2\}$. 
Since 
\[
T^2 \equiv \varDelta_i(T) \equiv u_i(T)\varDelta(T) \equiv u_i(0)T^2 \mod{(2,T^3)}, 
\]
we have $u_i(T) \in \varLambda^{\times}$, and hence 
\[
\varDelta(T) \equiv u_i(T)^{-1}\varDelta_i(T) \equiv \varDelta_i(T) \mod{(2,T)^3} 
\]
for all $i \in \{0,1,2\}$. 
In particular, $c_{101}+1 \equiv c_{012}+c_{201} \pmod{2}$. 
Since 
\[
(-1)^{c_{101}}=
\left\{
\begin{array}{ll}
\big(\frac{2}{\ell_1}\big)_4
& \hbox{if }\ell_1 \equiv 9 \pmod{16}, \\
\big(\frac{-\ell_1}{2}\big)_4=-1
& \hbox{if }\ell_1 \equiv 7 \pmod{16} 
\end{array}
\right.
\]
and 
\[
(-1)^{c_{112}}=(-1)^{c_{212}}=
\left\{
\begin{array}{ll}
\big(\frac{-\ell_2}{\ell_1}\big)_4=
\big(\frac{\ell_2}{\ell_1}\big)_4 
& \hbox{if }\ell_1 \equiv 9 \pmod{16},\ \ell_2 \equiv 3 \pmod{8}, \\
\big(\frac{-\ell_1}{\ell_2}\big)_4 
& \hbox{if }\ell_1 \equiv 7 \pmod{16},\ \ell_2 \equiv 5 \pmod{8} 
\end{array}
\right.
\]
by Propositions \ref{Redei:ciab} and \ref{Redei:4th}, 
we obtain the statement of Theorem \ref{thm:d=2:imag}. 
\end{proof}

\begin{remark}
Although the R\'edei symbols $[\ell_1,\ell_2,\ell_0]$ and $[\ell_0,\ell_1,\ell_2]$ are not defined in the situation of Theorem \ref{thm:d=2:imag}, 
the congruences for $c_{012}$ and $c_{201}$ imply a certain decomposition law of primes in the $2$-extension $K_{(0,1)}K_{(1,2)}/\mathbb Q$ of degree $32$. 
\end{remark}

\begin{remark}
In the situation of Theorem \ref{thm:d=2:imag}, 
the theorems of Mazur and Wiles (Iwasawa main conjecture, cf.\ \cite{Gre92,MW84} etc.) 
yields the equality $\frac{1}{2}f(T)\varLambda=\varDelta(T)\varLambda$ 
for $f(T) \in 2\varLambda$ such that 
$L_2(s,\chi)=f(\kappa^s-1)$ is the Kubota-Leopoldt $2$-adic $L$-function for 
a quadratic character $\chi$ associated to $\mathbb Q(\sqrt{\ell_1\ell_2})$. 
The construction of $f(T)$ via Stickelberger elements 
induces an algorithm of approximate computation of $\varDelta(T)$. 
On the other hand, 
the proof of Theorem \ref{thm:d=2:imag} does not use these results. 
\end{remark}

\begin{theorem}\label{thm:d=2:real}
Suppose $p=2$, and let $K=\mathbb Q(\sqrt{\ell_1\ell_2})$ be a real quadratic field with prime numbers $\ell_1 \equiv 7 \pmod{16}$, $\ell_2 \equiv 3 \pmod{8}$. 
Then $\widetilde{G}_{\{\infty\}}(K)$ has a minimal presentation 
\[
\entrymodifiers={+!!<0pt,\fontdimen22\textfont2>}
\xymatrix{
1 \ar[r] & \widetilde{R} \ar[r] & \widetilde{H} \ar[r] & \widetilde{G}_{\{\infty\}}(K) \ar[r] & 1
}
\]
where $\widetilde{H}=\langle w_{01},w_1 \rangle$ is a free pro-$2$ group with two generators $w_{01}$, $w_1$, 
and $\widetilde{R}$ is a normal subgroup of $\widetilde{H}$ normally generated by two relations $w_1^2$, $[w_{01},w_1,w_{01}]$. 
\end{theorem}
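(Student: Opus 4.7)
The plan is to apply the subquotient construction of Subsection 4.2 with $d=2$, $S=\{\ell_1,\ell_2,\infty\}$ and $\Sigma=\{\infty\}$, and then collapse the resulting presentation using the arithmetic hypotheses on $\ell_1,\ell_2$. Theorem \ref{thm:presQ} provides a Koch-type minimal presentation $1\to R\to F\to \widetilde G_S(\mathbb Q)\to 1$ with $F=\langle x_0,x_1,x_2\rangle$ and three relators $r_0,r_1,r_2$, and the subquotient construction transports this to a presentation
\[
1\to\overline R\to \overline H\to \widetilde G_{\{\infty\}}(K)\to 1
\]
with $\overline H=\langle w_{01},w_{02},w_1\rangle$ free pro-$2$ of rank three (where $w_{01}=\overline{x_0}$, $w_{02}=\overline{x_2^{-1}x_0 x_2}$, $w_1=\overline{x_1x_2}$) and $\overline R$ normally generated by the four elements $\overline{\rho_0}$, $\overline{x_2^{-1}\rho_0 x_2}$, $\overline{\rho_1}$, $\overline{\rho_2}$, where $\rho_i=[x_i^{-1},y_i^{-1}]$.

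Translating the residue conditions into constraints on the linking numbers exactly as in the proof of Theorem \ref{thm:d=2:imag}, one obtains: $\ell_1\equiv 7\pmod{16}$ forces $c_{10}=\mathrm{lk}(\ell_1,2)\equiv 2\pmod 4$ (even); $\ell_2\equiv 3\pmod 8$ forces $c_{20}=\mathrm{lk}(\ell_2,2)$ odd; the symbol values $\bigl(\tfrac{2}{\ell_1}\bigr)=1$ and $\bigl(\tfrac{2}{\ell_2}\bigr)=-1$ give $c_{01}\equiv 0$ and $c_{02}\equiv 1\pmod 2$; and $\ell_2-1\equiv 2\pmod 4$ controls the $x_2^{\ell_2-1}$-part of $r_2$. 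Because $c_{20}$ is odd, the image of $\overline{\rho_2}$ in $\overline H^{\mathrm{ab}}\simeq \mathbb Z_2^3$ contains the term $c_{20}(w_{02}-w_{01})$, so the relation $\overline{\rho_2}=1$ enables a Tietze elimination of $w_{02}$, replacing it by $w_{01}$ modulo commutators. The ambient free group then becomes $\widetilde H=\langle w_{01},w_1\rangle$, and three relators remain, all lying in $\widetilde H_2$.

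The central step is to analyse these three residual relators through the Magnus/Fox-derivative expansions used in Subsections 4.3 and 4.4. I expect to show that, after substituting $w_{02}$ out: (i) the images of $\overline{\rho_0}$ and $\overline{x_2^{-1}\rho_0 x_2}$ become mutually inverse modulo the other relators (because conjugation by $\overline{x_2}$ acts as inversion on $\overline M/\overline M_2$), reducing to a single essential relator; (ii) this relator, whose abelian part is dominated by the $\overline{x_2}^{c_{02}}$-factor of $y_0$ with $c_{02}\equiv 1\pmod 2$, collapses to $[w_{01},w_1,w_{01}]$ modulo $\widetilde H_{(4)}$ via Propositions \ref{Redei:ciab} and \ref{Redei:4th}; and (iii) the relator coming from $r_1=x_1^{\ell_1-1}[x_1^{-1},y_1^{-1}]$ collapses to $w_1^2$ modulo $\widetilde H_{(4)}$. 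Minimality is then verified via Proposition \ref{prop:genrk} and Theorem \ref{thm:relrk}: since $\ell_1\ell_2\equiv 5\pmod 8$ makes $2$ inert in $K$, we have $|P_K|=1$, $|\Sigma_K|=2$ (two real places), $\dim_{\mathbb F_2}E_K/E_K^2=2$, and a genus-theoretic computation gives $\dim_{\mathbb F_2}\widetilde B_{\{\infty\}}/(K^\times)^2=1$, yielding $\widetilde d_{\{\infty\}}=2$; the Shafarevich-type bound of Theorem \ref{thm:relrk} then forces $\widetilde r_{\{\infty\}}\le 2$.

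The principal obstacle is to verify that the two asserted relators $w_1^2$ and $[w_{01},w_1,w_{01}]$ \emph{exactly} normally generate $\widetilde R$ in $\widetilde H$, not merely modulo some finite step of the Zassenhaus filtration. Because $\widetilde G_{\{\infty\}}(K)$ is nonnilpotent, this requires tracking the higher-order contributions of each $\overline{\rho_i}$ inductively through successive Zassenhaus steps and showing they lie in the normal closure of the two asserted relators. The strong residue classes $\ell_1\equiv 7\pmod{16}$ and $\ell_2\equiv 3\pmod 8$ (rather than weaker $\bmod 8$, $\bmod 4$ conditions) enter precisely here, forcing the relevant R\'edei-type symbols and mod-$2$ Milnor numbers from Corollary \ref{cor:free4} to vanish in the deeper filtration steps.
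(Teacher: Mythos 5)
Your overall framework (the subquotient construction of Subsection 4.2 with $d=2$, the linking-number constraints $c_{01}=0$, $c_{10}\equiv 2\pmod 4$, $c_{02}\equiv c_{20}\equiv 1\pmod 2$, the elimination of $w_{02}$, and the identification of the two candidate relators) matches the paper. But the proof stalls exactly at what you call the ``principal obstacle,'' and your proposed resolution --- inductively tracking higher-order contributions of the $\overline{\rho_i}$ through successive Zassenhaus steps --- is not what the paper does and is not a viable plan as stated: since $\widetilde{G}_{\{\infty\}}(K)$ is not nilpotent, no finite amount of filtration bookkeeping, nor an unstructured induction over all steps, will by itself certify that $w_1^2$ and $[w_{01},w_1,w_{01}]$ normally generate all of $\widetilde{H}\cap\overline{R}$. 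Moreover, the stronger congruences $\ell_1\equiv 7\pmod{16}$, $\ell_2\equiv 3\pmod 8$ do not enter through vanishing of deeper Milnor invariants as you suggest; they enter only through the degree-$\le 2$ data $c_{001}$, $c_{i0}$, $c_{ij}$ feeding the minors $\varDelta_i(T)$ modulo $(2,T)^3$.

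The missing idea is to pivot from the presentation to the Iwasawa module and back. The paper first uses Theorem \ref{thm:approx} with $n=4$ to get $E_0(X)+(2,T)^3=(2,T^2)$ for $X=G_{\{\infty\}}(K^{\mathrm{cyc}})^{\mathrm{ab}}$, and then upgrades this finite-level approximation to the exact equality $E_0(X)=(2,T^2)$ by the bootstrap $(2,T)^n\subset (2,T)^{n-2}(2,T^2)\subset E_0(X)+(2,T)^{n+1}$. Cyclicity of $X/TX\simeq G_{\{\infty\}}(K)^{\mathrm{ab}}$ then gives $X\simeq\varLambda/(2,T^2)$ exactly, from which one reads off that $G_{\{\infty\}}(K^{\mathrm{cyc}})$ is prodihedral (using $G_{\emptyset}(\mathbb Q^{\mathrm{cyc}}(\sqrt{-\ell_1}))^{\mathrm{ab}}\simeq\mathbb Z_2$) and that $\widetilde{G}_{\emptyset}(K(\sqrt{-\ell_1}))$ is abelian. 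These structural facts are what place $w_1^2$ and $[w_{01},w_1,w_{01}]$ in $\widetilde{H}\cap\overline{R}$, yielding a surjection $\widetilde{H}/\widetilde{R}\twoheadrightarrow\widetilde{G}_{\{\infty\}}(K)$; the final equality $\widetilde{R}=\widetilde{H}\cap\overline{R}$ is then forced by comparing the surjections $\mathbb Z_2\oplus\mathbb Z/2\mathbb Z\twoheadrightarrow\widetilde{H}/\widetilde{H}_2\widetilde{R}$ and $\mathbb Z_2\twoheadrightarrow\widetilde{H}_2\widetilde{R}/\widetilde{R}$ against the known structure of $\widetilde{G}_{\{\infty\}}(K)$. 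Without this detour through the exact $\varLambda$-module structure, your argument establishes the presentation only modulo a finite filtration step, which is strictly weaker than the theorem.
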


\begin{proof}
By the assumption, 
$c_{01}=0$, $c_{10} \equiv 2 \pmod{4}$, 
$c_{20} \equiv c_{02}=1 \pmod{2}$ 
and $c_{21}=1-c_{12}$. 
Put $i=2c_{21}$, $j=2c_{12} \in \{0,2\}$. 
Then $\varDelta_i(T) \equiv 2 \pmod{(2,T)^2}$, and hence 
\[
2\varDelta_i(T) \equiv 4 \mod{(2,T)^3}, \quad
T\varDelta_i(T) \equiv 2T \mod{(2,T)^3}. 
\]
Since $\frac{c_{10}(c_{10}-1)}{2} \equiv 1 \pmod{2}$, 
and $(-1)^{c_{001}}=\big(\frac{-\ell_1}{2}\big)_4=-1$ 
by Propositions \ref{Redei:ciab} and \ref{Redei:4th}, 
we have $\varDelta_j(T) \equiv T^2 \pmod{(4,2T)+(2,T)^3}$, 
and hence 
\[
E_0(X)+(2,T)^3 = (2,T^2) 
\]
by (\ref{eqv:rhobar}), (\ref{eqv:mod^3}) and Theorem \ref{thm:approx}. 
Since 
\[
(2,T)^n \subset (2,T)^{n-2}(2,T^2) \subset E_0(X)+(2,T)^{n+1}, 
\]
one can see that 
$(2,T^2) = E_0(X) + (2,T)^n$ 
for any $n \ge 3$ by induction, and hence $E_0(X)=(2,T^2)$. 
Since $X/TX \simeq G_{\{\infty\}}(K)^{\mathrm{ab}}$ is cyclic, 
$X$ is a cyclic $\varLambda$-module, and hence 
\[
X=G_{\{\infty\}}(K^{\mathrm{cyc}})^{\mathrm{ab}} \simeq \varLambda/E_0(X) = \varLambda/(2,T^2) . 
\]
In particular, 
the commutator subgroup of $\widetilde{G}_{\{\infty\}}(K)$ is $G_{\emptyset}(K^{\mathrm{cyc}}(\sqrt{-\ell_1}))$, 
and $G_{\{\infty\}}(K^{\mathrm{cyc}})^{\mathrm{ab}}$ is an abelian group of type $[2,2]$. 
Since 
\begin{align}\label{d=2:real:/T}
G_{\emptyset}(\mathbb Q^{\mathrm{cyc}}(\sqrt{-\ell_1}))^{\mathrm{ab}} \simeq \varLambda/T \simeq \mathbb Z_2
\end{align}
(cf.\ \cite{Fer80,Kid79}), 
the maximal subgroup $G_{\emptyset}(K^{\mathrm{cyc}}(\sqrt{-\ell_1}))$ 
of $G_{\{\infty\}}(K^{\mathrm{cyc}})$ has infinite abelian quotient. 
Therefore $G_{\{\infty\}}(K^{\mathrm{cyc}})$ is a prodihedral pro-$2$ group. 
Then 
\[
\mathrm{Gal}((K^{\mathrm{cyc}})_{\{\infty\}}/K^{\mathrm{cyc}}(\sqrt{-\ell_1}))=G_{\emptyset}(K^{\mathrm{cyc}}(\sqrt{-\ell_1})) \simeq \mathbb Z_2,
\]
and hence $(K^{\mathrm{cyc}})_{\{\infty\}}=K\mathbb Q^{\mathrm{cyc}}(\sqrt{-\ell_1})_{\emptyset}$, which is a $\mathbb Z_2^2$-extension of $K(\sqrt{-\ell_1})$ by (\ref{d=2:real:/T}). 
Recall that the homomorphism $\overline{\varpi}$ gives isomorphisms 
$\widetilde{G}_{\{\infty\}}(K) \simeq \overline{H}/\overline{R}$ 
and $\mathrm{Gal}((K^{\mathrm{cyc}})_{\{\infty\}}/\mathbb Q) \simeq \overline{F}/\overline{R}$. 
Since $w_{01}^{-1}w_{02}=\overline{[x_0,x_d]} \in \overline{F}_2$, 
we have 
$\overline{\varpi}(w_{01}^{-1}w_{02}) \in G_{\emptyset}(K^{\mathrm{cyc}}(\sqrt{-\ell_1}))$, 
and hence $\overline{\varpi}$ induces a minimal presentation 
\[
\entrymodifiers={+!!<0pt,\fontdimen22\textfont2>}
\xymatrix{
1 \ar[r] & \widetilde{H} \cap \overline{R}  \ar[r] & \widetilde{H} \ar[r]^-{\overline{\varpi}} & \widetilde{G}_{\{\infty\}}(K) \ar[r] & 1
} , 
\]
where $\widetilde{H}=\langle w_{01},w_1 \rangle \subset \overline{H}$. 
Then $G_{\{\infty\}}(K^{\mathrm{cyc}})$ is a prodihedral pro-$2$ group 
generated by $\overline{\varpi}(w_1)$ and $\overline{\varpi}([w_{01},w_1])$, 
which has the procyclic maximal subgroup $G_{\emptyset}(K^{\mathrm{cyc}}(\sqrt{-\ell_1}))$ generated by $\overline{\varpi}([w_{01},w_1])$. 
This implies that 
\[
w_1^{-1}[w_{01},w_1]w_1 \equiv [w_{01},w_1]^{-1} \mod{\widetilde{H} \cap \overline{R}} . 
\]
Then $(w_1[w_{01},w_1]^z)^2 \equiv w_1^2 \pmod{\widetilde{H} \cap \overline{R}}$ for any $z \in \mathbb Z_2$, 
and hence $w_1^2 \equiv 1 \pmod{\widetilde{H} \cap \overline{R}}$. 
Moreover, since the maximal subgroup $\widetilde{G}_{\emptyset}(K(\sqrt{-\ell_1}))$ of $\widetilde{G}_{\{\infty\}}(K)$ is an abelian pro-$2$-group generated by 
$\overline{\varpi}(w_{01})$ and $\overline{\varpi}([w_{01},w_1])$, 
we have 
$[w_{01},w_1,w_{01}] \in \widetilde{H} \cap \overline{R}$. 
Therefore $\widetilde{R} \subset \widetilde{H} \cap \overline{R}$, i.e., 
there is a surjective homomorphism $\widetilde{H}/\widetilde{R} \rightarrow \widetilde{G}_{\{\infty\}}(K)$. 
Since $w_1^2 \in \widetilde{R}$ and 
\[
{[w_{01},w_1]w_1^{-1}[w_{01},w_1]w_1 = [w_{01},w_1^2] \in \widetilde{R},} 
\]
there are surjective homomorphisms 
$\mathbb Z_2 \oplus \mathbb Z/2\mathbb Z \rightarrow \widetilde{H}/\widetilde{H}_2\widetilde{R}$ and 
$\mathbb Z_2 \rightarrow \widetilde{H}_2\widetilde{R}/\widetilde{R}$. 
This implies that $\widetilde{H}/\widetilde{R} \simeq \widetilde{G}_{\{\infty\}}(K)$, i.e., $\widetilde{R}=\widetilde{H} \cap \overline{R}$. 
Thus the proof of Theorem \ref{thm:d=2:real} is completed. 
\end{proof}

\begin{remark}
In Theorem \ref{thm:d=2:real}, 
the finiteness of $G_{\emptyset}(K^{\mathrm{cyc}})^{\mathrm{ab}}$ 
(Greenberg's conjecture, cf.\ \cite{Gre76}) 
is certainly verified, using the same description of $\varDelta_i(T) \bmod{(2,T)^3}$ by $c_{ij}$ and $c_{iab}$ as in the case of Theorem \ref{thm:d=2:imag}. 
In fact, $G_{\emptyset}(K^{\mathrm{cyc}})=\{1\}$, i.e., $\widetilde{G}_{\emptyset}(K) \simeq \mathbb Z_2$ in this case (cf.\ e.g.\ \cite{OT97}). 
\end{remark}

\begin{remark}
In Theorem \ref{thm:d=2:imag}, 
we calculated Iwasawa polynomials from a Koch type presentation of $\widetilde{G}_S(\mathbb Q)$. 
Conversely, 
there is a case where an explicit presentation of $\widetilde{G}_{\emptyset}(k)$ is obtained from the Iwasawa polynomial: 
If $p=2$ and $k=\mathbb Q(\sqrt{-\ell_1\ell_2})$ with prime numbers 
$\ell_1 \equiv 7 \pmod{16}$, 
$\ell_2 \equiv 3 \pmod{8}$, 
then $\widetilde{G}_{\emptyset}(k)$
has a minimal presentation 
\[
\entrymodifiers={+!!<0pt,\fontdimen22\textfont2>}
\xymatrix{
1 \ar[r] & R \ar[r] & F \ar[r] & \widetilde{G}_{\emptyset}(k) \ar[r] & 1
}
\]
with a free pro-$2$ group $F$ generated by $\{a,b,c\}$ 
and the normal subgroup $R$ normally generated by 
\[
a^2[a,b], \quad
a^2[b,c,b], \quad
[b,c,a], \quad
[a,c], \quad
[b,c]^{-C_1}[c,b,c]a^{C_1}b^{-C_0}, 
\]
where $C_1, C_0 \in 2\mathbb Z_2$ are the coefficients of the Iwasawa polynomial $\varDelta(T)=T^2+C_1T+C_0$ of $X=G_{\emptyset}(k^{\mathrm{cyc}})^{\mathrm{ab}}$ (cf.\ \cite[Theorem 2.2]{Miz10b}). 
\end{remark}

\vspace*{15pt}
\begin{acknowledgements}
The author thanks the referee for helpful comments and suggestion 
for the improvement of this paper.
This work was supported by JSPS KAKENHI Grant Numbers JP26800010, JP17K05167. 
\end{acknowledgements}

\begin{reference}

\bibitem{Ama14} F. Amano, 
On a certain nilpotent extension over $\boldsymbol{Q}$ of degree $64$ and the $4$-th multiple residue symbol, 
Tohoku Math.\ J.\ (2) \textbf{66} (2014), no.\ 4, 501--522. 

\bibitem{BLM13} J. Blondeau, P. Lebacque and C. Maire, 
On the cohomological dimension of some pro-$p$-extensions above the cyclotomic $\mathbb Z_p$-extension of a number field, 
Mosc.\ Math.\ J.\ \textbf{13} (2013), no.\ 4, 601--619. 

\bibitem{DDMS} 
J. D. Dixon, M. P. F. du Sautoy, A. Mann and D. Segal, 
Analytic pro-$p$ groups, Second edition, 
Cambridge Studies in Advanced Mathematics \textbf{61}, Cambridge University Press, Cambridge, 1999. 

\bibitem{Fer80} B. Ferrero, 
The cyclotomic $\mathbf Z_2$-extension of imaginary quadratic fields, 
Amer.\ J.\ Math.\ \textbf{102} (1980), no.\ 3, 447--459. 

\bibitem{FW79} B. Ferrero and L. C. Washington, 
The Iwasawa invariant $\mu_p$ vanishes for abelian number fields, 
Ann.\ of Math.\ (2) \textbf{109} (1979), no.\ 2, 377--395. 

\bibitem{For11} P. Forr\'e, 
Strongly free sequences and pro-$p$-groups of cohomological dimension $2$, 
J.\ Reine Angew.\ Math.\ \textbf{658} (2011), 173--192. 

\bibitem{Fox53} R. H. Fox, 
Free differential calculus.\ I.\ Derivation in the free group ring, 
Ann.\ of Math.\ (2) \textbf{57} (1953), 547--560. 

\bibitem{Gar14} J. G\"artner, 
R\'edei symbols and arithmetical mild pro-2-groups, 
Ann.\ Math.\ Qu\'e.\ \textbf{38} (2014), no.\ 1, 13--36. 

\bibitem{Gar15} J. G\"artner, 
Higher Massey products in the cohomology of mild pro-$p$-groups, 
J.\ Algebra \textbf{422} (2015), 788--820. 

\bibitem{Gol74} R. Gold, 
The nontriviality of certain $\mathbf Z_l$-extensions, 
J.\ Number Theory \textbf{6} (1974), 369--373. 

\bibitem{Gra} G. Gras, 
Class field theory - From theory to practice, 
Springer Monographs in Mathematics, Springer-Verlag, Berlin, 2003. 

\bibitem{Gre76} R. Greenberg, 
On the Iwasawa invariants of totally real number fields, 
Amer.\ J.\ Math.\ \textbf{98} (1976), no.\ 1, 263--284. 

\bibitem{Gre92} C. Greither, 
Class groups of abelian fields, and the main conjecture, 
Ann.\ Inst.\ Fourier (Grenoble) \textbf{42} (1992), no.\ 3, 449--499. 


\bibitem{Hil} J. Hillman, 
Algebraic invariants of links, 
Series on Knots and Everything \textbf{32}, 
World Scientific Publishing Co., Inc., River Edge, NJ, 2002. 

\bibitem{HMM} J. Hillman, D. Matei and M. Morishita, 
Pro-$p$ link groups and $p$-homology groups, 
Primes and knots, 121--136, Contemp.\ Math.\ \textbf{416}, 
Amer.\ Math.\ Soc., Providence, RI, 2006. 


\bibitem{IM14} T. Itoh and Y. Mizusawa, 
On tamely ramified pro-$p$-extensions over $\mathbb Z_p$-extensions of $\mathbb Q$, 
Math.\ Proc.\ Cambridge Philos.\ Soc.\ \textbf{156} (2014), no.\ 2, 281--294. 

\bibitem{Jau98} J.-F. Jaulent, 
Th\'eorie $\ell$-adique globale du corps de classes, 
J.\ Th\'eor.\ Nombres Bordeaux \textbf{10} (1998), no.\ 2, 355--397. 

\bibitem{KM08} T. Kadokami and Y. Mizusawa, 
Iwasawa type formula for covers of a link in a rational homology sphere, 
J.\ Knot Theory Ramifications \textbf{17} (2008), no.\ 10, 1199--1221. 

\bibitem{KM13} T. Kadokami and Y. Mizusawa, 
On the Iwasawa invariants of a link in the 3-sphere, 
Kyushu J.\ Math.\ \textbf{67} (2013), no.\ 1, 215--226. 

\bibitem{KM14} T. Kadokami and Y. Mizusawa, 
Iwasawa invariants of links, 
Intelligence of Low-dimensional Topology, RIMS K\=oky\=uroku \textbf{1911} (2014), 10--17. 
\verb+http://hdl.handle.net/2433/223236+

\bibitem{Kid79} Y. Kida, 
On cyclotomic $\mathbf Z_2$-extensions of imaginary quadratic fields, 
T\^ohoku Math.\ J.\ (2) \textbf{31} (1979), no.\ 1, 91--96. 

\bibitem{KinH} H. Koch, 
On $p$-extensions with given ramification, 
Appendix 1 in; 
K. Haberland, 
Galois cohomology of algebraic number fields, 
VEB Deutscher Verlag der Wissenschaften, Berlin, 1978. 

\bibitem{Koch} H. Koch, 
Galois theory of $p$-extensions, 
Springer Monographs in Mathematics, Springer-Verlag, Berlin, 2002. 

\bibitem{Lab06} J. Labute, 
Mild pro-$p$-groups and Galois groups of $p$-extensions of $\mathbb Q$, 
J.\ Reine Angew.\ Math.\ \textbf{596} (2006), 155--182. 

\bibitem{LM11} J. Labute and J. Min\'a\v c, 
Mild pro-$2$-groups and $2$-extensions of $\mathbb Q$ with restricted ramification, 
J.\ Algebra \textbf{332} (2011), 136--158. 



\bibitem{Maz} B. Mazur, 
Remarks on the Alexander polynomial, unpublished paper, 1963/1964. 
\verb+http://www.math.harvard.edu/~mazur/papers/alexander_polynomial.pdf+

\bibitem{MW84} B. Mazur and A. Wiles, 
Class fields of abelian extensions of $\mathbf Q$, 
Invent.\ Math.\ \textbf{76} (1984), no.\ 2, 179--330. 

\bibitem{Miz05} Y. Mizusawa, 
On the maximal unramified pro-$2$-extension of $\mathbb Z_2$-extensions of certain real quadratic fields II,
Acta Arith.\ \textbf{119} (2005), no.\ 1, 93--107.

\bibitem{Miz10b} Y. Mizusawa, 
On the maximal unramified pro-$2$-extension over the cyclotomic $\mathbb Z_2$-extension of an imaginary quadratic field, 
J.\ Th\'eor.\ Nombres Bordeaux \textbf{22} (2010), no.\ 1, 115--138. 

\bibitem{Miz10a} Y. Mizusawa, 
On unramified Galois $2$-groups over $\mathbb Z_2$-extensions of real quadratic fields, 
Proc.\ Amer.\ Math.\ Soc.\ \textbf{138} (2010), no.\ 9, 3095--3103.

\bibitem{Miz**} Y. Mizusawa, 
Tame pro-2 Galois groups and the basic $\mathbb Z_2$-extension, 
Trans.\ Amer.\ Math.\ Soc.\ \textbf{370} (2018), no.\ 4, 2423--2461.

\bibitem{MO10} Y. Mizusawa and M. Ozaki, 
Abelian $2$-class field towers over the cyclotomic $\mathbb Z_2$-extensions of imaginary quadratic fields, 
Math.\ Ann.\ \textbf{347} (2010), no.\ 2, 437--453. 

\bibitem{MO13} Y. Mizusawa and M. Ozaki, 
On tame pro-$p$ Galois groups over basic $\mathbb Z_p$-extensions, 
Math.\ Z.\ \textbf{273} (2013), no.\ 3--4, 1161--1173. 

\bibitem{Mor02} M. Morishita, 
On certain analogies between knots and primes, 
J.\ Reine Angew.\ Math.\ \textbf{550} (2002), 141--167.

\bibitem{Mor04} M. Morishita, 
Milnor invariants and Massey products for prime numbers, 
Compos.\ Math.\ \textbf{140} (2004), no.\ 1, 69--83. 


\bibitem{Mor} M. Morishita, 
Knots and Primes - An Introduction to Arithmetic Topology, 
Springer, 2012. 



\bibitem{NSW} J. Neukirch, A. Schmidt and K. Wingberg, 
Cohomology of number fields, Second edition, 
Grundlehren der Mathematischen Wissenschaften \textbf{323}, Springer-Verlag, Berlin, 2008.

\bibitem{NQD} T. Nguyen-Quang-Do, 
Formations de classes et modules d'Iwasawa, 
Lecture Notes in Math.\ \textbf{1068}, Springer, Berlin, 
(1984), 167--185. 

\bibitem{Oka06} K. Okano, 
Abelian $p$-class field towers over the cyclotomic $\mathbb Z_p$-extensions of imaginary quadratic fields, 
Acta Arith.\ \textbf{125} (2006), no.\ 4, 363--381. 

\bibitem{Oza07} M. Ozaki, 
Non-abelian Iwasawa theory of $\mathbb Z_p$-extensions, 
J.\ Reine Angew.\ Math.\ \textbf{602} (2007), 59--94.

\bibitem{OT97} M. Ozaki and H. Taya, 
On the Iwasawa $\lambda_2$-invariants of certain families of real quadratic fields, 
Manuscripta Math.\ \textbf{94} (1997), no.\ 4, 437--444. 

\bibitem{pari} The PARI~Group, 
PARI/GP version 2.7.4, Univ. Bordeaux, 2015. 
\verb+http://pari.math.u-bordeaux.fr/+

\bibitem{Red39} L. R\'edei, 
Ein neues zahlentheoretisches Symbol mit Anwendungen auf die Theorie der quadratischen Zahlk\"orper.\ I., 
J.\ Reine Angew.\ Math.\ \textbf{180} (1939), 1--43. 

\bibitem{Sal08} L. Salle, 
Sur les pro-$p$-extensions \`a ramification restreinte au-dessus de la $\mathbb Z_p$-extension cyclotomique d'un corps de nombres, 
J.\ Th\'eor.\ Nombres Bordeaux \textbf{20} (2008), no.\ 2, 485--523. 

\bibitem{Sal10} L. Salle, 
On maximal tamely ramified pro-$2$-extensions over the cyclotomic $\mathbb Z_2$-extension of an imaginary quadratic field, 
Osaka J.\ Math.\ \textbf{47} (2010), no.\ 4, 921--942. 

\bibitem{San93} J. W. Sands, 
On the nontriviality of the basic Iwasawa $\lambda$-invariant for an infinitude of imaginary quadratic fields, 
Acta Arith.\ \textbf{65} (1993), no.\ 3, 243--248. 

\bibitem{Sch07} A. Schmidt, 
Rings of integers of type $K(\pi,1)$, 
Doc.\ Math.\ \textbf{12} (2007), 441--471. 

\bibitem{Serre} J.-P. Serre, 
Galois cohomology, 
Springer Monographs in Mathematics, Springer-Verlag, Berlin, 2002. 

\bibitem{Sha08}
R. T. Sharifi, 
On Galois groups of unramified pro-$p$ extensions, 
Math.\ Ann.\ \textbf{342} (2008), no.\ 2, 297--308. 

\bibitem{Uek16} J. Ueki, 
On the Iwasawa $\mu$-invariants of branched $\mathbf Z_p$-covers, 
Proc.\ Japan Acad.\ Ser.\ A Math.\ Sci.\ \textbf{92} (2016), no.\ 6, 67--72. 

\bibitem{Uek} J. Ueki, 
On the Iwasawa invariants for links and Kida's formula, 
Internat.\ J.\ Math.\ \textbf{28} (2017), no.\ 6, 1750035. 

\bibitem{Vog05} D. Vogel, 
On the Galois group of $2$-extensions with restricted ramification, 
J.\ Reine Angew.\ Math.\ \textbf{581} (2005), 117--150. 

\bibitem{Was} L. C. Washington, 
Introduction to cyclotomic fields, Second edition, 
Graduate Texts in Mathematics \textbf{83}, Springer-Verlag, New York, 1997. 

\bibitem{Win} K. Wingberg, 
Arithmetical Koch Groups, 
preprint, 2007. 

\bibitem{Gen00} G. Yamamoto, 
On the vanishing of Iwasawa invariants of absolutely abelian $p$-extensions. 
Acta Arith.\ \textbf{94} (2000), no.\ 4, 365--371. 

\bibitem{Yam84} Y. Yamamoto, 
Divisibility by $16$ of class number of quadratic fields whose $2$-class groups are cyclic, 
Osaka J.\ Math.\ \textbf{21} (1984), no.\ 1, 1--22. 

\end{reference}

\vspace*{10pt}

{\footnotesize 
\noindent
Department of Mathematics, Nagoya Institute of Technology, Gokiso, Showa, Nagoya 466-8555, Japan. \\
\texttt{mizusawa.yasushi@nitech.ac.jp}
\\[2mm]%
}

\end{document}